\documentclass[12pt]{amsart}
\usepackage[a4paper, total={156mm, 224mm}, centering]{geometry}
\usepackage{amsmath,amssymb,mathtools}
\usepackage{enumitem}
\usepackage{hyperref,hyphenat}
\hypersetup{hidelinks}

\numberwithin{equation}{section}

\newtheorem{theorem}{Theorem}[section]
\newtheorem{proposition}[theorem]{Proposition}
\newtheorem{lemma}[theorem]{Lemma}
\newtheorem{corollary}[theorem]{Corollary}
\theoremstyle{remark}
\newtheorem{remark}[theorem]{Remark}
\theoremstyle{plain}

\newcommand{\Z}{\mathbb{Z}}
\newcommand{\Q}{\mathbb{Q}}
\newcommand{\R}{\mathbb{R}}
\newcommand{\sO}{\mathcal{O}}
\newcommand{\sD}{\mathcal{D}}
\newcommand{\sA}{\mathcal{A}}
\newcommand{\bX}{\mathbf{X}}
\newcommand{\bY}{\mathbf{Y}}
\newcommand{\bZ}{\mathbf{Z}}
\newcommand{\by}{\mathbf{y}}
\newcommand{\bz}{\mathbf{z}}
\newcommand{\ba}{\mathbf{a}}
\newcommand{\bb}{\mathbf{b}}

\newcommand{\Sq}[1]{\operatorname{Sq}(#1)}
\newcommand{\Tr}{\operatorname{Tr}}
\newcommand{\Norm}{\operatorname{N}}

\title[$13$ unknowns and Lucas congruences]{$13$ unknowns over quadratic integer rings\\ and Lucas congruences}

\author{Geng-Rui Zhang}
\address{School of Mathematical Sciences, Peking University, Beijing 100871, China}
\email{grzhang@stu.pku.edu.cn, chibasei@163.com}
\date{August 31, 2026}

\subjclass[2020]{Primary 11U05, 03D35; Secondary 03D25, 11B39, 11R11}
\keywords{Hilbert's tenth problem, quadratic integer rings, Diophantine definability, undecidability, Pell's equations, Lucas sequences}

\begin{document}

\begin{abstract}
For every quadratic number field $K$, we prove a uniform $3$-unknown Diophantine definition of integer tuples in $\sO_K$, allowing finitely many polynomial nonvanishing conditions. This yields an effective $+3$ transfer principle and a $13$-unknown representation of every recursively enumerable integer relation. Consequently, there exists an absolute degree bound $D_0\geq1$ such that for every quadratic number field $K$, there is no algorithm that, given
\[
P(Y_1,\ldots,Y_{13})\in\Z[Y_1,\ldots,Y_{13}],\quad \deg P\leq D_0,
\]
decides whether $P=0$ has a solution in $\sO_K^{13}$.

The arithmetic input is a fourth-order Pell--Lucas congruence. It is a specialization of the norm-one Lucas multiplication formula, which yields exact valuations for the deviation of a Lucas quotient from its linear term, together with deviation criteria for Lucas--Wieferich and Wall--Sun--Sun primes. We also establish local surjectivity and $\ell$-adic density for second-order correction terms for norm-one Lucas sequences.
\end{abstract}

\maketitle
\tableofcontents

\section{Introduction}
Hilbert's tenth problem (HTP) over a commutative ring $R$ asks whether there is an algorithm deciding the solvability in $R$ of polynomial equations with coefficients in $R$. For the rings of integers of quadratic number fields, undecidability was proved by Denef \cite{Denef75}. Koymans--Pagano \cite{KP26} and Alp\"oge--Bhargava--Ho--Shnidman \cite{ABHS26} independently proved that HTP is undecidable over the ring of integers of every number field.

We study the quantitative problem for quadratic fields: how many unknowns suffice for undecidability when the input polynomial has coefficients in $\Z$? This question is related to quantitative Diophantine definability and existential rank; see, for example, \cite{DDF21}. Recent work of Bayer--David--Hassler--Matiyasevich--Schleicher studies universal unknown--degree pairs over $\Z$, together with a parallel formal-verification program \cite{BayerEtAl25,BayerDavid25}.

For the ring of Gaussian integers $\Z[i]$, Matiyasevich and Sun obtained a $20$-unknown bound \cite{MS27}, subsequently reduced to $18$ by Ding--Li \cite{DingLi26}. Sun \cite[Theorems~1.1 and~1.2]{Sun26} then proved that, for an arbitrary quadratic number field $K$, solvability over $\sO_K$ is undecidable for integer-coefficient polynomial equations in $16$ unknowns when $K$ is imaginary and in $15$ unknowns when $K$ is real.

We prove a uniform $13$-unknown bound together with an absolute degree bound:
\begin{theorem}\label{thm:bd13}
There exists an integer $D_0\geq1$ such that for every quadratic number field $K$, there is no algorithm that, given
\[
P(Y_1,\ldots,Y_{13})\in\Z[Y_1,\ldots,Y_{13}],\quad \deg P\leq D_0,
\]
decides whether $P=0$ has a solution in $\sO_K^{13}$.
\end{theorem}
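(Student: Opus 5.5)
The plan is to combine the abstract's two ingredients. First, a Diophantine definition of $\Z^m$ inside $\sO_K^m$ using only $3$ extra unknowns, uniform in $K$ and allowing finitely many conditions $Q_j\neq0$. Second, a universal representation of recursively enumerable relations over $\Z$ with few unknowns and bounded degree.

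Concretely, I would fix a recursively enumerable but undecidable set $S\subseteq\Z_{\ge0}$. Take a universal Diophantine representation over $\Z$ in $10$ unknowns of bounded degree, i.e. $a\in S\iff\exists x_1,\dots,x_{10}\in\Z$ with $F(a,x_1,\dots,x_{10})=0$. Matiyasevich-type universal pairs provide such an $F$; over $\Z$ the count can be taken small after the usual conversion from $\Z_{\ge0}$ to $\Z$ using Lagrange's four squares. I would also arrange for the parameter $a$ to enter polynomially, so that the degree bound holds uniformly in $a$. The reduction is then: given $a$, produce $P_a(Y_1,\dots,Y_{13})=0$, solvable in $\sO_K^{13}$ iff $a\in S$.

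The core step is the $+3$ transfer. I need a system, built from Pell equations with the fourth-order Pell--Lucas congruence as arithmetic input, in unknowns $y_1,\dots,y_{10}$ and auxiliaries $z_1,z_2,z_3$, with coefficients in $\Z$ and independent of $K$. It must satisfy
\[
(y_1,\dots,y_{10})\in\Z^{10}\iff\exists z\in\sO_K^3:\ G_i(\mathbf{y},\mathbf{z})=0,\ \prod_j Q_j(\mathbf{y},\mathbf{z})\neq0 .
\]
The idea is to encode each tuple through a single norm-one Lucas/Pell solution. The Lucas congruence forces a quotient of its components to be rational, and the exact-valuation statement makes the deviation from the linear term detect integrality. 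I expect this uniformity across all $K$, with only $3$ auxiliaries, to be the main obstacle. The difficulty is making the Pell equation's unit group behave the same way for every $d$ and ruling out spurious solutions involving $\sqrt d$ without adding unknowns. The earlier sections' results on Lucas congruences should supply exactly this, so I would invoke them as the black box here.

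The remaining step is to eliminate the nonvanishing conditions and merge the system into a single equation without new unknowns. Summing squares is unavailable in general because $K$ may be real. I would instead use a norm-form combination that vanishes in $\sO_K$ only at the origin, uniformly in $K$, for instance the multivariate version of $x^2-2y^2$ with coefficients avoiding the finitely many problematic fields. Alternatively, I would fold each condition $Q\neq0$ into the existing equations; the abstract's phrase ``allowing finitely many nonvanishing conditions'' suggests the transfer lemma is stated so this step costs no unknowns. With $10+3=13$ unknowns and degree bounded by a constant $D_0$ depending only on $F$ and the fixed transfer system, an algorithm for degree-$\le D_0$ equations in $13$ unknowns over $\sO_K$ would decide $S$, a contradiction.
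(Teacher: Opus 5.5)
Your overall shape ($10+3$ unknowns, a fixed nonrecursive r.e.\ set, a degree bound uniform in $K$) matches the paper. The count $13$, however, does not follow from what you invoke.

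\textbf{The source step.} You assume $a\in S\iff\exists x_1,\dots,x_{10}\in\Z\colon F(a,\mathbf{x})=0$ with no side condition, and no such result is available. Matiyasevich's $9$ unknowns range over $\Z_{\geq0}$, and converting with Lagrange's four squares multiplies the number of unknowns rather than bringing it to $10$. The best known plain bound over $\Z$ is $11$ unknowns (Sun; cf.\ the pair $(11,\Delta_{\Z})$ used in Theorem~\ref{thm:universalpairOK}). Since $11+3=14$, that only recovers Corollary~\ref{cor:explicit-bounded}.

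\textbf{What the transfer must do instead.} The paper starts from Sun's Theorem~\ref{thm:source}: $10$ unknowns \emph{plus} the side condition $z_{10}\neq0$. The essential point is that the $+3$ transfer absorbs a nonvanishing condition $R(\mathbf{y})\neq0$ on the transferred integer tuple at no extra cost (Theorems~\ref{thm:uniform3} and~\ref{thm:plus3}). Your transfer only carries nonvanishing conditions internal to the integrality test, which you must still eliminate, and you give no mechanism for either.

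In the paper, $R$ is built into a divisor: $s=uR(\mathbf{a})$ in the imaginary case and $y=T(\mathbf{a})R(\mathbf{a})u$ in the real case. Sun's polynomial $F$ (Lemma~\ref{lem:F}) packs two Pell-square conditions and the divisibility of $D(v)$ by that divisor into one equation, at the cost of the third unknown $J$. Since $D(v)\neq0$ for every algebraic integer $v$, the divisibility forces the divisor to be nonzero, hence $R(\mathbf{a})\neq0$. Conversely, Corollary~\ref{cor:reusex} shows that for an integral tuple a single $v$ satisfies both the Pell-square condition and the divisibility.

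\textbf{Smaller points.}
\begin{enumerate}
\item Your account of the arithmetic is off. The fourth-order congruence is used only in that existence direction. Integrality is forced by Pell rigidity (Lemma~\ref{lem:imagPell}, Theorem~\ref{thm:Denef}) together with the packing and integrality criteria of \S~\ref{sec:quad-input}, not by the exact-valuation corollary.
\item Your remark on merging equations is reversed. $\sum_i P_i^2=0$ forces all $P_i=0$ when $K$ is real, and fails when $K$ is imaginary (e.g.\ $1^2+i^2=0$ in $\Z[i]$). That is why the paper uses $A^2+(d+1)B^2$ in the imaginary case.
\item You do not need a system independent of $K$; only the degree must be. In the paper $K$ enters solely through coefficients ($d+1$, or $a,e$ from a Pell solution). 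So $D_0$ is simply the larger of the degrees of the real and imaginary constructions.
\end{enumerate}
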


The count follows from a fixed-cost transfer theorem: integer tuples, together with finitely many polynomial equalities and nonvanishing conditions, can be transferred to $\sO_K$ using $3$ additional unknowns; this number is independent of the tuple length and of the number of conditions.

\subsection{$3$-unknown transfer}
The main structural results of the paper are two fixed-cost integrality theorems.
\begin{theorem}[Uniform $3$-unknown integrality]\label{thm:uniform3}
Let $K$ be a quadratic number field and let $n\in\Z_{\geq0}$. For every
$R(\bX)\in\Z[X_1,\ldots,X_n]$, there is a polynomial
\[
\Phi_{K,n,R}(\bX,U,V,J)\in\Z[X_1,\ldots,X_n,U,V,J]
\]
such that, for every $\ba\in\sO_K^n$,
\[
\exists u,v,j\in\sO_K\colon\ \Phi_{K,n,R}(\ba,u,v,j)=0\quad\Longleftrightarrow\quad\ba\in\Z^n\text{ and }R(\ba)\neq0.
\]
Moreover, the polynomial $\Phi_{K,n,R}$ can be constructed effectively from $(K,n,R)$.
\end{theorem}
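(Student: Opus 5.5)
Fix $K=\Q(\sqrt d)$ with $d$ squarefree, and let $\omega$ generate $\sO_K$. The plan runs in three steps. First, compress the tuple condition $\ba\in\Z^n$ together with $R(\ba)\neq0$ into a single statement about one element of $\sO_K$. Second, express that statement with the three unknowns $u,v,j$, using a norm-one Pell equation and the fourth-order Pell--Lucas congruence. Third, assemble everything as a sum of norms or squares.

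\textbf{Compression.} For $a\in\sO_K$ we have $a\in\Z$ iff $a=\bar a$. We have no access to conjugation, but we can encode rationality through Pell solutions. The units $\varepsilon$ of a fixed norm-one equation $u^2-Dv^2=1$, with $D$ chosen effectively from $K$ so that $\sqrt D\notin K$ (for instance $D=d\,m^2+1$ or a suitable nonsquare), have only the solutions $(u,v)\in\sO_K^2$ coming from $\pm\eta^k$ with $k\in\Z$. Here we use the Denef-type fact that, for a suitable choice of $D$, the unit group of $\sO_K[\sqrt D]$ relative to $\sO_K$ is of rank one up to torsion; choosing $D$ so that this holds is where the case split between real and imaginary $K$ enters.

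The integer index $k$ is then recovered from $v_k$ via the Lucas congruence. Modulo a suitable power of a fixed element, $v_k/v_1$ is congruent to $k$ plus a second-order correction. The fourth-order Pell--Lucas congruence makes this correction exactly controllable, so that $x\equiv v_k/v_1$ modulo $u_k-1$ (or modulo $v_k^2$) forces $x$ to be congruent to a rational integer $k$ modulo a huge modulus.

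Combining the tuple into one number: replace $\ba$ by a single element $x=\sum a_i T^{i}$ with large integer base $T$ built from the data. Even simpler is to require each $a_i$ to satisfy the congruence against the same Pell solution $(u,v)$. The extra unknown $j$ will witness the divisibility, say $a_i v_1 - v\equiv 0$ folded in, and will also carry a size condition (bounding $|a_i|$, $|\bar a_i|$ by $u$, written existentially as a norm or sum-of-squares positivity statement). The nonvanishing $R(\ba)\neq 0$ is absorbed by demanding $R(\ba)\mid$ something nonzero. Concretely, $R(\ba)\cdot j = $ a polynomial in $u,v$ that is $\neq0$ for nontrivial solutions, which costs no new unknown if $j$ does double duty via a single combined divisibility with coprime moduli.

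\textbf{Assembly.} All conditions are of the form $f_1=0,\dots,f_r=0$ over $\sO_K$. We merge them into a single equation by the standard trick for a field without real embeddings dominating: $\sum f_i^2$ fails when $K$ is imaginary, so we instead use $f_1^2-\alpha f_2^2$ with $\alpha$ a nonnorm from $K$, iterated, which is effective in $K$.

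\textbf{Main obstacle.} The hard step is showing that the congruence forces genuine integrality, not just integrality modulo something. Once $x\equiv k\pmod{M}$ with $k\in\Z$, one needs $|x-k|$ and $|\bar x-k|$ both smaller than $|M|$-type bounds to conclude $x=k$. In the imaginary case a size bound is available from norms. In the real case a bound on both conjugates must be encoded without inequalities. I would attempt this by using the Pell solution itself as the bound, via the $\ell$-adic control from the second-order correction terms, and by choosing $j$ so that $x$ is forced into the set $\{0,\dots,u\}$ through a Lucas divisibility $v_x\mid v$.
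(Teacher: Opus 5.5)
Your proposal has a genuine gap: it never gets the construction down to three unknowns, which is the whole difficulty of the theorem.

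\textbf{The unknown budget.} In your scheme $u,v$ are the coordinates of a Pell solution. So the congruence tying the tested element to $v_k/v_1$, the size bound on both conjugates, and the nonvanishing condition for $R(\ba)$ must all be carried by the single witness $j$. You assert this can be done, but none of it is realized:
\begin{itemize}
\item ``double duty via a single combined divisibility with coprime moduli'' requires certifying coprimality, which costs further unknowns;
\item positivity of both conjugates in a real quadratic ring typically costs several sums-of-squares unknowns;
\item $v_x$ is not a polynomial in $x$, so $v_x\mid v$ needs further Pell unknowns.
\end{itemize}
Your last paragraph concedes that the integrality step is open.

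The paper relies on three devices absent from your plan.
\begin{enumerate}
\item Sun's polynomial $F$ (Lemma~\ref{lem:F}) turns ``$a_1,a_2$ are squares and $s\mid t$'' into one equation with one witness $j$. Hence Pell coordinates are never unknowns: one only asks that $1+ey^2$ and $1+ey^2q^2$ be squares, with $q=t+y^2v$ (respectively $3y^2+1$ and $3y^2(\cdots)^2+1$ in the imaginary case).
\item The size bound is free. Since $y=T(\ba)R(\ba)u$ is a multiple of the tested element $t$ (respectively $y=sh$ with $h=H_0(l_1)H_0(l_2)$), one gets $\nu=y^2\geq\Norm_{K/\Q}(t)>\Tr_{K/\Q}(t)$. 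This feeds Proposition~\ref{prop:real-rationality} (respectively the coefficient comparison in Proposition~\ref{prop:imag-two-element}). The same $y\neq0$ also forces $R(\ba)\neq0$.
\item The congruence variable $v$ is reused in $y\mid D(v)$ (respectively $s\mid D(v)$).
\end{enumerate}
This reverses the role you give the fourth-order congruence. Soundness comes from Pell rigidity, $q=(yq)/y\in\Q$, and the size bound. Proposition~\ref{prop:pell-star}\textup{(2)} and Corollary~\ref{cor:reusex} are used only for completeness: they show that the $v$ making $1+ey^2q^2=p_{nk}^2$ can simultaneously satisfy the divisibility by $D(v)$.

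\textbf{Two concrete steps that fail.}
\begin{itemize}
\item \emph{Pell rigidity for real $K$.} No $D\in\Z$ with $\sqrt D\notin K$ has infinitely many solutions in $\sO_K$ that are all rational. If $D<0$, then $K(\sqrt D)$ is CM and there are only finitely many solutions. If $D>0$, the relative unit rank is $2$; for instance, a nontrivial solution of $x^2-dDy^2=1$ gives $(u,v)=(x,y\sqrt d)$ with $v\notin\Q$. This is why the paper uses Denef's $e=db^2$ with $\sqrt e\in K$. That yields only $y^2\in\Z$ (Theorem~\ref{thm:Denef}), and rationality is recovered via $q^2\in\Q$ and the trace criterion.
\item \emph{Packing with an integer base.} Packing via $x=\sum a_iT^i$ with an integer base $T$ independent of $\ba$ fails: $a_1=T\omega$, $a_2=-\omega$ give $a_1T+a_2T^2=0\in\Z$. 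The base must depend on $\ba$ (as $\eta=2\prod(3x_j+1)$ or $m=2\prod(x_j^2+1)$), and then a genuine packing theorem is needed, namely Theorem~\ref{thm:MS} or Theorem~\ref{thm:realpacking}. Testing each $a_i$ separately instead would cost $n$ witnesses and destroy uniformity in $n$.
\end{itemize}
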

The theorem is uniform in $n$: exactly $3$ additional unknowns suffice for every $n$. It yields the following finite-system transfer principle. We use the convention that $\Z^0$ and $\sO_K^0$ consist of a single empty tuple.

\begin{theorem}[Finite-system $+3$ transfer]\label{thm:plus3}
Let $K$ be a quadratic number field, and let $r,m,s,t\in\Z_{\geq0}$. Let
\[
\bX=(X_1,\ldots,X_r)\quad\text{and}\quad\bY=(Y_1,\ldots,Y_m)
\]
be unknowns. Given
\[
P_1,\ldots,P_s,R_1,\ldots,R_t\in\Z[\bX,\bY],
\]
there is an effectively constructible polynomial
\[
H(\bX,\bY,U,V,J)\in\Z[\bX,\bY,U,V,J]
\]
such that, for every $\ba\in\sO_K^r$,
\[
\begin{split}
&\exists(\bb,u,v,j)\in\sO_K^{m+3}\colon\ H(\ba,\bb,u,v,j)=0\\
\Longleftrightarrow\hspace{2mm}&\ba\in\Z^r\quad\text{and}\quad\exists\bb\in\Z^m:\
P_1(\ba,\bb)=\cdots=P_s(\ba,\bb)=0,\\
&\hspace{47.3mm}R_1(\ba,\bb)\cdots R_t(\ba,\bb)\neq0,
\end{split}
\]
where the equality condition is vacuous if $s=0$, and the product is $1$ if $t=0$. Therefore, finitely many polynomial equalities and nonvanishing conditions over $\Z$ transfer to $\sO_K$ at the cost of $3$ additional unknowns.
\end{theorem}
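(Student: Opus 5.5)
We derive Theorem~\ref{thm:plus3} from Theorem~\ref{thm:uniform3}, applied with $n=r+m$ to the combined tuple $(\bX,\bY)$. The idea is to fold all integer-side conditions into a single polynomial equation and a single nonvanishing polynomial. Then one application of the $3$-unknown integrality definition certifies at once that $(\ba,\bb)$ is an integer tuple and that the nonvanishing condition holds.

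First we combine the equalities. Over $\Z$ (indeed over any subring of $\R$), $P_1=\cdots=P_s=0$ is equivalent to $S:=P_1^2+\cdots+P_s^2=0$, with $S=0$ when $s=0$. This step is only used after integrality of $(\ba,\bb)$ has been forced, so positivity over $\Z$ is all we need; there is no concern about sums of squares vanishing nontrivially in $\sO_K$ for imaginary $K$.

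Next we combine the nonvanishing conditions. Set $R:=R_1\cdots R_t\in\Z[\bX,\bY]$, with $R=1$ when $t=0$. Apply Theorem~\ref{thm:uniform3} with $n=r+m$ and this $R$ to obtain $\Phi:=\Phi_{K,r+m,R}(\bX,\bY,U,V,J)$. Then define
\[
H(\bX,\bY,U,V,J):=\Phi(\bX,\bY,U,V,J)^2+\varepsilon\, S(\bX,\bY)^2 ,
\]
where a plain sum of squares is not safe in $\sO_K$. To make $H=0$ equivalent to $\Phi=0$ and $S=0$ simultaneously, we use a norm-form trick. Write $K=\Q(\sqrt d)$ with $d$ squarefree, and pick an integer $c$ that is not a norm from $K$, so that $x^2-cy^2=0$ with $x,y\in K$ forces $x=y=0$. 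Any $c$ with $\sqrt c\notin K$ works; choose, for example, a prime $c\neq d$ with $c\nmid d$, which can be found effectively. Set
\[
H:=\Phi^2-c\,S^2 .
\]
If $H=0$ and $S\neq0$, then $c=(\Phi/S)^2$ is a square in $K$, which is impossible. Hence $S=0$, and then $\Phi=0$. Conversely, $\Phi=S=0$ gives $H=0$. The construction of $c$, $S$, $R$ and $\Phi$ is effective by Theorem~\ref{thm:uniform3}, so $H$ is effective.

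Finally we check the equivalence in both directions.
\begin{itemize}[label={}]
\item \emph{($\Rightarrow$)} Suppose $(\bb,u,v,j)\in\sO_K^{m+3}$ satisfies $H=0$. Then $\Phi(\ba,\bb,u,v,j)=0$, so Theorem~\ref{thm:uniform3} gives $(\ba,\bb)\in\Z^{r+m}$ and $R(\ba,\bb)\neq0$. We also have $S(\ba,\bb)=0$, which over $\Z$ gives all $P_i(\ba,\bb)=0$.
\item \emph{($\Leftarrow$)} Suppose $\ba\in\Z^r$ and $\bb\in\Z^m$ satisfy the conditions. Then $S(\ba,\bb)=0$, and Theorem~\ref{thm:uniform3} supplies $u,v,j\in\sO_K$ with $\Phi=0$, so $H=0$.
\end{itemize}

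The only genuinely delicate point is this conjunction step: combining two equations over $\sO_K$ without extra unknowns. The non-square choice of $c$ handles it. Everything else is bookkeeping on top of Theorem~\ref{thm:uniform3}.
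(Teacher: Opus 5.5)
Your proposal is correct and follows the paper's proof: set $R=\prod R_j$, collapse the $P_i$ into one polynomial, apply Theorem~\ref{thm:uniform3} to the full tuple $(\bX,\bY)$, and join the two equations with an anisotropic form $x^2-cy^2$. The differences are minor: you use a single prime $c\nmid d$ for every $K$, whereas the paper uses $c=-1$ for real $K$ and $c=-(d+1)$ via Lemma~\ref{lem:anisotropic} for imaginary $K$; and you rightly observe that $\sum P_i^2$ only needs to vanish faithfully at integer points (integrality is already forced by $\Phi=0$), whereas the paper combines the $P_i$ over $\sO_K$. One misstatement needs fixing: being a non-norm from $K$ is not the relevant condition (for example, $-1$ is not a norm from $\Q(i)$, yet $x^2+y^2$ is isotropic there). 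What you need, and what your prime $c\nmid d$ does satisfy, is that $c$ is not a square in $K$.
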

Theorems~\ref{thm:uniform3} and~\ref{thm:plus3} still hold if the input polynomial(s) are allowed to have coefficients in $\sO_K$; see Remark~\ref{rmk:OKcoeff}.

The proofs use the following ingredients. Proposition~\ref{prop:pell-star} is the common arithmetic tool. Its fourth-order congruence determines a normalized Pell quotient modulo the square of a Pell coordinate, and Corollary~\ref{cor:reusex} uses the same auxiliary unknown to enforce a divisibility condition that encodes nonvanishing. Lemma~\ref{lem:F} by Sun combines two square conditions and this divisibility condition into one equation.

For Theorem~\ref{thm:uniform3}, the construction differs between the real and imaginary cases. If $K=\Q(\sqrt{-d})$ is imaginary, the theorem of Matiyasevich--Sun (Theorem~\ref{thm:MS}) encodes an arbitrary tuple into two quantities, and Sun's Pell rigidity (Lemma~\ref{lem:imagPell})
\[
u^2-3v^2=1,\quad u,v\in\sO_K\quad\Longrightarrow\quad u,v\in\Z
\]
establishes their integrality. Our Proposition~\ref{prop:imag-two-element} provides a two-element test for integers. If $K=\Q(\sqrt{d})$ is real, Denef \cite{Denef75} (Theorem~\ref{thm:Denef}) proved a square-rationality criterion. The real packing theorem (Theorem~\ref{thm:realpacking}) based on Sun's work, together with our trace--norm criterion in Proposition~\ref{prop:real-rationality}, uses Denef's square-rationality conclusion to deduce integrality.

After establishing Theorem~\ref{thm:uniform3}, Theorem~\ref{thm:plus3} follows by combining the finitely many conditions and applying Theorem~\ref{thm:uniform3} to the full tuple. The nonvanishing conditions are replaced by their product, the equalities are combined into one equation by a sum of squares in the real case, and by repeated use of the anisotropic form $A^2+(d+1)B^2$ of Sun (Lemma~\ref{lem:anisotropic}) in the imaginary case. Then Theorem~\ref{thm:uniform3} is applied to the full tuple. No additional unknowns are introduced.

\medskip

For Theorem~\ref{thm:bd13}, we need the following result of Sun \cite[Theorem~1.1(ii)]{Sun21}:
\begin{theorem}[Sun]\label{thm:source}
	For every recursively enumerable (r.e.) set $\sA\subseteq\Z_{\geq0}$, there is a polynomial $Q_{\sA}\in\Z[T,Z_1,\ldots,Z_{10}]$ such that for every $a\in\Z_{\geq0}$,
	\[
	a\in\sA\quad\Longleftrightarrow\quad\exists z_1,\ldots,z_{10}\in\Z\colon\ Q_{\sA}(a,z_1,\ldots,z_{10})=0,\ z_{10}\neq0.
	\]
	Consequently, there is no algorithm that, given
	\[
	P(Z_1,\ldots,Z_{10})\in\Z[Z_1,\ldots,Z_{10}],
	\]
	decides whether there exist $z_1,\ldots,z_{10}\in\Z$ such that
	\[
	P(z_1,\ldots,z_{10})=0,\quad z_{10}\neq0.
	\]
\end{theorem}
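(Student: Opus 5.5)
The plan is to derive Theorem~\ref{thm:source} from Matiyasevich's theorem (MRDP) in its quantitative form, together with the classical fact that the sign of an integer is expressible with few unknowns. Sun's proof of \cite[Theorem~1.1(ii)]{Sun21} proceeds along these lines, and we simply cite it. Still, we sketch the route, since the shape of the statement (ten integer unknowns plus one nonvanishing condition $z_{10}\neq0$) explains why it feeds so directly into Theorem~\ref{thm:plus3}.

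First, every r.e.\ set $\sA\subseteq\Z_{\geq0}$ is Diophantine over $\Z_{\geq0}$, with a bounded number of natural-number unknowns (e.g.\ Matiyasevich's nine unknowns). We pass to integer unknowns by writing each natural number as a sum of four squares, or, more economically, as a sum of three squares, since every number of the form $4n+1$ is a sum of three squares. This substitution would cost many unknowns. The key step in Sun's argument is therefore to rework the Matiyasevich--Robinson relation-combining machinery directly over $\Z$. He arranges the exponential-Diophantine core, through Pell equations and a Lucas-sequence relation, so that only nine auxiliary integer unknowns are needed. The single inequality that remains is handled by the extra unknown $z_{10}$ and the condition $z_{10}\neq0$, which replaces a positivity condition that would otherwise require three or four squares.

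Concretely, the steps are as follows. (1) Start from a representation $a\in\sA\iff\exists x_1,\ldots,x_k\geq0\colon F(a,\bar x)=0$. (2) Encode the variables via Matiyasevich's ``relation combining'' lemma into a single condition involving a Pell-type solution and a divisibility. (3) Express the divisibility $b\mid c$ with $c\neq0$ as $c=bz$ plus a nonvanishing condition, rather than as an inequality. (4) Count the unknowns to arrive at ten. The undecidability consequence is then immediate. Take $\sA$ to be r.e.\ but not recursive, and set $P=Q_{\sA}(a,\cdot)$. An algorithm deciding solvability of $P=0$ with $z_{10}\neq0$ would decide membership $a\in\sA$, which is a contradiction.

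The main obstacle is step (4): keeping the count at exactly ten while converting nonnegativity constraints into integer equations. This is where one must use that nonvanishing is cheaper than positivity. Since the paper treats this as a black box, we would just cite \cite[Theorem~1.1(ii)]{Sun21}.
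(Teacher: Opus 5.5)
Your proposal is correct and matches the paper: the paper also takes the representation statement directly from \cite[Theorem~1.1(ii)]{Sun21} without proof, and it gets the undecidability consequence exactly as you do, by fixing a nonrecursive r.e.\ set $\sA$ and noting that $a\mapsto Q_{\sA}(a,\cdot)$ is computable. Your steps (2)--(4), which describe the internals of Sun's argument, are speculative and not load-bearing, so present them as heuristic motivation rather than as an account of his proof.
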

For basics of recursive theory, we refer the reader to \cite{Cutland}. Nonrecursive r.e.\ subsets of $\Z_{\geq0}$ exist; see \cite[\S~7.4]{Cutland}.

Applying Theorem~\ref{thm:plus3} to Theorem~\ref{thm:source} gives Theorem~\ref{thm:re13}: every r.e.\ relation on $\Z^r$ has a representation over $\sO_K$ with $13$ unknowns, uniformly in $r$. Fixing one nonrecursive r.e.\ set $\sA\subseteq\Z_{\geq0}$ then yields Theorem~\ref{thm:bd13}; the degree $D_0$ is uniformly bounded because in the explicit construction, the degrees of polynomials only depend on $\deg(Q_{\sA})$ and the signature of $K$.

\subsection{Toward $12$ unknowns}
The unknown count underlying Theorem~\ref{thm:bd13} factors as $13=10+3$, so there are two possible routes to $12$.

The first is to improve the source theorem over $\Z$, Theorem~\ref{thm:source}. Consequently, a $9$-unknown undecidability source theorem over $\Z$ would yield a $12$-unknown bound over $\sO_K$ via Theorem~\ref{thm:plus3}. Sun \cite[Theorem~1.1(i)]{Sun21} proved $9$-unknown representations of r.e.\ sets when one unknown is constrained to be nonnegative. Another possible approach is to reuse the nonnegativity unknown inside the transfer construction; the present construction does not achieve this.

The second route is to reduce the $3$-unknown cost in Theorem~\ref{thm:uniform3}. In both cases, its proof uses two Pell-square conditions. In the imaginary case, they force two mixed expressions to be integral. In the real case, their quotient supplies the rationality required by Proposition~\ref{prop:real-rationality}. The relation-combining polynomial $F$ requires the third auxiliary unknown; see Lemma~\ref{lem:F}. A uniform $2$-unknown definition of
\[
\ba\in\Z^n,\quad R(\ba)\neq0
\]
inside $\sO_K$ would therefore give a $12$-unknown theorem.

\subsection{Applications}
Section~\ref{sec:logic-pf} proves the transfer theorems and the $13$-unknown r.e.\ representation theorem (Theorem~\ref{thm:re13}); the latter yields Theorem~\ref{thm:bd13}. Then we transfer the explicit $11$-unknown universal pair $(11,\Delta_{\Z}\approx1.68105\cdot10^{63})$ over $\Z$ of Bayer et al.\ \cite{BayerEtAl25,BayerDavid25} to a $14$-unknown universal pair $(14,2\Delta_{\Z})$ over every quadratic integer ring. Corollary~\ref{cor:explicit-bounded} gives the resulting $14$-unknown undecidability statement for all quadratic integer rings, with a uniform explicit degree bound $2\Delta_{\Z}$.

Section~\ref{sec:pure-apps} studies the Lucas expansion behind Proposition~\ref{prop:pell-star}. Proposition~\ref{prop:lucas-all} and Corollary~\ref{cor:lucas4} present all-orders and fourth-order expansion formulas. Corollary~\ref{cor:deviation-vp} then isolates the finer arithmetic information used here: the exact $\ell$-adic valuation of the deviation of $u_{nk}/u_n$ from its linear term. Corollaries~\ref{cor:wieferich-deviation} and~\ref{cor:fib-deviation} express Lucas--Wieferich and Wall--Sun--Sun conditions through these deviations, while Theorem~\ref{thm:lucas-surj} and Corollary~\ref{cor:padic-density} show local surjectivity and $\ell$-adic density for normalized second-order correction terms.

\subsection*{Structure of the paper}
In \S~\ref{sec:prel}, we recall the relation-combining polynomial of Sun and develop the Pell congruence. In \S~\ref{sec:quad-input}, we establish the integrality criteria for imaginary and real quadratic fields. In \S~\ref{sec:logic-pf}, we complete the proofs of Theorems~\ref{thm:uniform3}, \ref{thm:plus3}, \ref{thm:re13}, and \ref{thm:bd13}; then we derive the universal-pair and the explicit bounded-degree consequence. In \S~\ref{sec:pure-apps}, we present the relevant Lucas multiplication expansion, exact linearization-deviation valuations, and Wieferich deviation criteria, and prove the local surjectivity and $\ell$-adic density results.

\section{Relation combining and Pell congruences}\label{sec:prel}
Throughout, if $A$ is a commutative ring and $a,b\in A$, then $a\mid b$ in $A$ means that $b=ac$ for some $c\in A$. For a commutative ring $A$, write $\Sq{A}=\{r^2\colon r\in A\}$.

\subsection{Nonzero integers and relation combining}\label{sec:comb}
Define
\[
D(X)=(2X+1)(3X+1)\in\Z[X].
\]
\begin{lemma}\label{lem:DL}
For $s\in\Z$, one has
\[
s\neq0\quad\Longleftrightarrow\quad\exists w\in\Z\colon\ s\mid D(w)\text{ in }\Z.
\]
\end{lemma}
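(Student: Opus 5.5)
The direction ($\Leftarrow$) is immediate. If $s=0$, then $s\mid D(w)$ forces $D(w)=0$, that is, $(2w+1)(3w+1)=0$. This is impossible for $w\in\Z$, since $2w+1$ is odd and $3w+1\equiv1\pmod 3$, so neither factor vanishes. Hence any $s$ admitting such a $w$ is nonzero.

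For ($\Rightarrow$), let $s\neq0$ and factor $|s|=2^a3^b m$ with $\gcd(m,6)=1$. Only $|s|$ matters for divisibility, so we may take $s>0$. We look for one $w\in\Z$ such that $2^a3^b m$ divides $(2w+1)(3w+1)$, and we do this by choosing which factor handles which prime power.

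\begin{itemize}
\item Since $2$ is invertible modulo $3^b m$, we can solve $2w+1\equiv0\pmod{3^b m}$.
\item Since $3$ is invertible modulo $2^a$, we can solve $3w+1\equiv0\pmod{2^a}$.
\end{itemize}

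By the Chinese remainder theorem (the moduli $3^b m$ and $2^a$ are coprime), there is $w$ satisfying both congruences simultaneously. Then $3^b m\mid 2w+1$ and $2^a\mid 3w+1$. Because $2^a$ and $3^bm$ are coprime, it follows that $2^a3^bm\mid(2w+1)(3w+1)=D(w)$.

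No step is a real obstacle. The only point needing care is the allocation of prime powers: $2w+1$ is always odd, so it can never absorb the factor $2^a$, and $3w+1$ is never divisible by $3$, so it can never absorb $3^b$. For this reason the $2$-part must go to $3w+1$ and the rest to $2w+1$, and the Chinese remainder theorem then glues the two congruences together.
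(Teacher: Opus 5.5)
Your proof is correct and is essentially the paper's argument: the paper only asserts (citing Sierpi\'nski) that $D(X)\equiv0\pmod{q}$ is solvable for every $q\in\Z_{>0}$ and that $D$ has no integer root. Your Chinese-remainder construction, which sends the $2$-part of $s$ to $3w+1$ and the odd part to $2w+1$, is the standard verification of that solvability. Two minor remarks: the step from $3^bm\mid 2w+1$ and $2^a\mid 3w+1$ to $2^a3^bm\mid D(w)$ needs no coprimality, and only the placement of the $2$- and $3$-parts is forced, since the primes dividing $m$ could go to either factor.
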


It is easy to see that for every $q\in\Z_{>0}$, $D(X)\equiv0\pmod{q}$ is solvable in $\Z$ (cf.~\cite[Problem~202]{Sierpinski70}), which gives Lemma~\ref{lem:DL}. A related $2$-unknown parametrization of nonzero integers is attributed to Tung; see \cite[Lemma~3.4]{MS27}.

We will also use the fact that, for every algebraic integer $x\in\overline{\Q}$,
\begin{equation}\label{eq:Dnonzero}
D(x)\neq0.
\end{equation}

Define
\begin{align*}
F(A_1,A_2,S,T,J)={}&(T-JS)^4-2(A_1+A_2)S^2(T-JS)^2 \\
& +(A_1-A_2)^2S^4\in\Z[A_1,A_2,S,T,J].
\end{align*}
We need the following relation-combining lemma of Sun \cite[Lemma~2.2]{Sun26}:
\begin{lemma}[Sun]\label{lem:F}
Let $K$ be a number field. Let $a_1,a_2,s,t\in\sO_K$ with $a_1\neq a_2$ and $t\neq0$. Then the following are equivalent:
\begin{enumerate}[label=\textup{(\roman*)}]
\item $a_1,a_2\in\Sq{\sO_K}$ and $s\mid t$ in $\sO_K$;
\item there exists $j\in\sO_K$ such that $F(a_1,a_2,s,t,j)=0$.
\end{enumerate}
\end{lemma}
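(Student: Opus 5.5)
The plan is to factor $F$ as a norm form. Put $X=T-JS$. A direct expansion gives
\[
F=\bigl(X^2-(A_1+A_2)S^2\bigr)^2-4A_1A_2S^4=\prod_{\epsilon_1,\epsilon_2\in\{\pm1\}}\bigl(X-\epsilon_1\alpha_1S-\epsilon_2\alpha_2S\bigr),
\]
where $\alpha_i$ is any square root of $A_i$ in an algebraic closure. So $F(a_1,a_2,s,t,j)=0$ holds exactly when
\[
t-js=s(\pm\alpha_1\pm\alpha_2)
\]
for some choice of signs, with $\alpha_i^2=a_i$ in $\overline{\Q}$. Both directions follow from this.

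\emph{(i)$\Rightarrow$(ii).} Write $a_1=b_1^2$ and $a_2=b_2^2$ with $b_i\in\sO_K$, and write $t=sc$ with $c\in\sO_K$. Set $j=c-b_1-b_2\in\sO_K$. Then $t-js=s(b_1+b_2)$, which is one of the four linear factors of $F$, so $F(a_1,a_2,s,t,j)=0$.

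\emph{(ii)$\Rightarrow$(i).} Suppose $F(a_1,a_2,s,t,j)=0$.

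First, $s\neq0$: if $s=0$, then $F=t^4\neq0$.

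Next, put $y=(t-js)/s\in K$. Dividing $F=0$ by $s^4$ gives
\[
y^4-2(a_1+a_2)y^2+(a_1-a_2)^2=0,
\]
a monic equation with coefficients in $\sO_K$. Hence $y$ is an algebraic integer in $K$, so $y\in\sO_K$.

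By the factorization, $y=\alpha+\beta$ with $\alpha^2=a_1$ and $\beta^2=a_2$ in $\overline{\Q}$. We have $y\neq0$, since $y=0$ forces $\beta=-\alpha$ and hence $a_1=a_2$, which is excluded.

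The key step is to show $\alpha,\beta\in K$. From $y^2=a_1+a_2+2\alpha\beta$ we get
\[
y^2+a_1-a_2=2\alpha^2+2\alpha\beta=2\alpha y,
\]
so $\alpha=(y^2+a_1-a_2)/(2y)\in K$. Then $\beta=y-\alpha\in K$ as well. Both $\alpha$ and $\beta$ are roots of monic polynomials over $\sO_K$, so they are integral and lie in $\sO_K$. Thus $a_1,a_2\in\Sq{\sO_K}$.

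Finally, $t=s(j+y)$ with $j+y\in\sO_K$, so $s\mid t$ in $\sO_K$.

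The only nontrivial point is recovering the individual square roots from their sum $y$. This is where the hypothesis $a_1\neq a_2$ enters, through $y\neq0$. The hypothesis $t\neq0$ is used only to exclude $s=0$.
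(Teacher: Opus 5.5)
Your proof is correct and complete: the factorization of $F$ as $\prod_{\epsilon_1,\epsilon_2}(X-\epsilon_1\alpha_1S-\epsilon_2\alpha_2S)$ is right, and so is the integrality of $y=(t-js)/s$. The recovery $\alpha=(y^2+a_1-a_2)/(2y)$, using $y\neq0$, is also valid, and you correctly locate where $a_1\neq a_2$ and $t\neq0$ are needed.

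The paper itself gives no proof of Lemma~\ref{lem:F}, so there is no argument of its own to compare against. It imports the lemma from Sun \cite[Lemma~2.2]{Sun26} and only remarks that it extends to every integrally closed domain of characteristic $\neq2$. Your argument uses nothing beyond the integral closedness of $\sO_K$ and the invertibility of $2$ in $K$. It therefore also justifies that remark essentially verbatim. It likewise covers the case $\sO_{\Q}=\Z$, which the paper invokes in the proof of Theorem~\ref{thm:uniform3}.
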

Lemma~\ref{lem:F} extends to every integrally closed domain of characteristic $\neq2$.

\subsection{Fourth-order Pell congruence}
We need the following lemma, which follows from \cite[p.~27]{BW23}. See also \cite[Lemma~3.5]{Sun26}.
\begin{lemma}[Pell divisibility]\label{lem:pell-div}
Let $a\geq2$ be an integer, set $e=a^2-1$, and define $p_j,q_j\in\Z_{\geq0}$ by
\[
p_j+q_j\sqrt{e}=(a+\sqrt{e})^j\quad(j\in\Z_{\geq0}).
\]
For every $h\in\Z\setminus\{0\}$, there exists an integer $r\in\Z_{>0}$ such that for every $m\in\Z_{>0}$,
\[
h\mid q_{m r}\text{ in }\Z.
\]
\end{lemma}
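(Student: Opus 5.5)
The plan is to prove Lemma~\ref{lem:pell-div} by reducing modulo $|h|$ and using that the sequence $(p_j,q_j)$ is periodic there. We may assume $h>0$. The pairs $(p_j,q_j)$ satisfy the recursion
\[
p_{j+1}=ap_j+eq_j,\qquad q_{j+1}=p_j+aq_j,
\]
with initial values $(p_0,q_0)=(1,0)$. Equivalently, $(p_j,q_j)^{T}=M^j(1,0)^T$, where
\[
M=\begin{pmatrix}a&e\\1&a\end{pmatrix},\qquad \det M=a^2-e=1.
\]
Since $\det M=1$, the matrix $M$ is invertible over $\Z/h\Z$. Its image therefore lies in the finite group $\mathrm{SL}_2(\Z/h\Z)$ and has some finite order $r\geq1$ there. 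So $M^r\equiv I\pmod h$, and hence $M^{mr}\equiv I\pmod h$ for every $m\in\Z_{>0}$.

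Applying this to $(1,0)^T$ gives $(p_{mr},q_{mr})\equiv(1,0)\pmod h$, and in particular $h\mid q_{mr}$ for all $m\geq1$. This is exactly the statement, with $r$ depending only on $h$ (and $a$).

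An alternative is to work in the ring $\Z[\sqrt e]/h$. The element $a+\sqrt e$ is a unit there, since its inverse is $a-\sqrt e$, and the unit group is finite, so some power equals $1$. Its coordinate along $\sqrt e$ then vanishes modulo $h$. Strictly, $\Z[\sqrt e]$ needs care when $e$ is a square, but $e=a^2-1$ with $a\geq2$ is never a square, so $\Z[\sqrt e]\cong\Z[X]/(X^2-e)$ is free over $\Z$ with basis $1,\sqrt e$. Comparing coefficients is then legitimate, and this argument also works. There is no real obstacle here; the only point to check is this identification of coordinates.
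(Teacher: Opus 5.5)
Your proof is correct. Since $\det M=a^2-e=1$, the reduction of $M$ lies in the finite group $\mathrm{SL}_2(\Z/h\Z)$, so $M^r\equiv I\pmod h$ for some $r\geq1$. Applying $M^{mr}\equiv I\pmod h$ to $(1,0)^T$ then gives $q_{mr}\equiv 0\pmod h$ for every $m\geq1$. Reading off coordinates is legitimate because $e=a^2-1$ is never a square.

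The paper gives no argument of its own; it cites \cite[p.~27]{BW23} and \cite[Lemma~3.5]{Sun26}. There the fact is presumably an instance of the standard rank-of-apparition theory for the Lucas sequence $q_j=U_j(2a,1)$. Since $Q=1$ is a unit modulo every $h$, there is a least $\rho(h)\geq1$ with $h\mid q_{\rho(h)}$. Then $h\mid q_{m\rho(h)}$ because $(q_j)$ is a divisibility sequence.

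Your route packs both steps into the single statement $M^r\equiv I$, so you never need the divisibility property $q_\rho\mid q_{m\rho}$. The price is sharpness. Your $r$ is the order of $M$ modulo $h$, i.e.\ the period of $(p_j,q_j)$ modulo $h$, and it can be a proper multiple of $\rho(h)$. For example, with $a=2$ and $h=4$ one has $q_2=4$ but $p_2\equiv-1\pmod 4$. You also do not get the converse $h\mid q_n\Rightarrow\rho(h)\mid n$.

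Neither refinement matters here. In fact the paper only ever uses the lemma with $m=1$, to pick a single index $n$ with $ht\mid q_n$ in Proposition~\ref{prop:imag-two-element}, and with $tR(\mathbf{a})\mid q_{r_0}$ in the real case of Theorem~\ref{thm:uniform3}. So your self-contained argument fully suffices.
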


\begin{proposition}[Fourth-order Pell congruence]\label{prop:pell-star}
Let $a\geq2$ be an integer, set $e=a^2-1$, and define $p_j,q_j\in\Z_{\geq0}$ by
\[
p_j+q_j\sqrt{e}=(a+\sqrt{e})^j\quad(j\in\Z_{\geq0}).
\]
Fix $n\in\Z_{>0}$. Define
\[
\gamma_e(Z)=\frac{eZ(Z^2-1)}{6}\in\Q[Z].
\]
Then the following two statements hold.
\begin{enumerate}[label=\textup{(\arabic*)}]
	\item
	For every positive odd integer $k$, one has $q_n\mid q_{nk}$ in $\Z$ and
	\[
	\frac{q_{nk}}{q_n}\equiv k+\gamma_e(k)q_n^2\pmod{q_n^4},
	\]
	where $\gamma_e(k)\in\Z_{\geq0}$. In particular,
	\[
	\frac{q_{nk}}{q_n}\equiv k\pmod{q_n^2}.
	\]
	
	\item
	Let $s\in\Z\setminus\{0\}$ satisfy $s\mid q_n$ in $\Z$, let $t\in\Z$ be odd, fix $\varepsilon\in\{\pm1\}$, and fix a residue class $c\pmod{s}$. Then there exist infinitely many $r\in\Z$ such that $k:=t+r q_n^2$ is a positive odd integer and, with
	\[
	\lambda_k:=\frac{q_{nk}}{q_n}\in\Z_{>0}\quad\text{and}\quad
	x:=\varepsilon\frac{\lambda_k-t}{q_n^2}=\varepsilon\left(\frac{\lambda_k-k}{q_n^2}+r\right)\in\Z,
	\]
	one has
	\[
	x\equiv c\pmod{s}.
	\]
\end{enumerate}
\end{proposition}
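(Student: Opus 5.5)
The plan is to prove part (1) by expanding $(p_n+q_n\sqrt e)^k$ binomially and reducing modulo $q_n^4$ using the Pell relation $p_n^2-eq_n^2=1$. Part (2) then follows from (1) by choosing $r$ in a suitable arithmetic progression.

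\emph{Part (1).} Write $p=p_n$ and $q=q_n$. Since $p_{nk}+q_{nk}\sqrt e=(a+\sqrt e)^{nk}=(p+q\sqrt e)^k$, comparing $\sqrt e$-parts gives
\[
q_{nk}=\sum_{j\ \text{odd}}\binom{k}{j}p^{k-j}q^{j}e^{(j-1)/2}.
\]
Hence $q\mid q_{nk}$, and
\[
\frac{q_{nk}}{q}\equiv kp^{k-1}+\binom{k}{3}p^{k-3}eq^2\pmod{q^4},
\]
because every term with $j\ge5$ carries the factor $q^4$. The case $k=1$ is trivial, so assume $k\ge3$.
\begin{itemize}
\item Since $k$ is odd, $k-1$ is even. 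The Pell relation $p^2=1+eq^2$ then gives $p^{k-1}=(1+eq^2)^{(k-1)/2}\equiv 1+\tfrac{k-1}{2}eq^2\pmod{q^4}$.
\item Similarly $p^{k-3}\equiv1\pmod{q^2}$, so the $j=3$ term is $\binom{k}{3}eq^2$ modulo $q^4$.
\end{itemize}
Adding the two contributions gives
\[
k+eq^2\left[\frac{k(k-1)}{2}+\frac{k(k-1)(k-2)}{6}\right]=k+eq^2\,\frac{(k-1)k(k+1)}{6}=k+\gamma_e(k)q^2.
\]
Here $\gamma_e(k)\in\Z_{\ge0}$ because $(k-1)k(k+1)$ is a product of three consecutive integers and $k\ge1$. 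The congruence modulo $q^2$ follows immediately.

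\emph{Part (2).} Let $k=t+rq^2$ with $r$ large, so that $k$ is positive and odd. By (1),
\[
\frac{\lambda_k-t}{q^2}\equiv r+\gamma_e(k)\pmod{q^2},
\]
and therefore this congruence also holds modulo $s$, since $s\mid q$. We then control $\gamma_e(k)-\gamma_e(t)$. Factoring,
\[
\gamma_e(k)-\gamma_e(t)=\frac{e(k-t)(k^2+kt+t^2-1)}{6}=\frac{e\,rq^2\,(k^2+kt+t^2-1)}{6}.
\]
Since $k$ and $t$ are odd, $k^2+kt+t^2$ is odd, so $k^2+kt+t^2-1$ is even and the $2$ in the denominator cancels. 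For the $3$ we split into two cases.
\begin{itemize}
\item If $3\mid q$, then $q^2/3=q\cdot(q/3)$ is divisible by $q$, hence by $s$, so the difference is $\equiv0\pmod s$ for every $r$. Thus $x\equiv\varepsilon(r+\gamma_e(t))\pmod s$, and we choose $r\equiv\varepsilon c-\gamma_e(t)\pmod s$.
\item If $3\nmid q$, then $3\nmid s$. We restrict to $r=3r'$, which makes the difference $\equiv0\pmod s$. Thus $x\equiv\varepsilon(3r'+\gamma_e(t))\pmod s$, and since $3$ is invertible modulo $s$ we can solve for $r'$ modulo $s$.
\end{itemize}
In either case the admissible $r$ form a full residue class modulo $3s$. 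This class contains infinitely many $r$ large enough that $k>0$, and every such $r$ gives $x\equiv c\pmod s$.

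The main obstacle I expect is this handling of the denominator $6$ in $\gamma_e$ modulo $s$. The parity of $k$ and $t$ settles the factor $2$, and the case split on $3\mid q$ settles the factor $3$.
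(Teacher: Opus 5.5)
Your Part (1) is the paper's argument. Expanding $(p_n+q_n\sqrt e)^k$ binomially gives exactly the Lucas multiplication formula the paper cites, and your reduction modulo $q_n^4$ via $p_n^2=1+eq_n^2$ is the same.

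There is a gap in Part (2) concerning the parity of $k$. You write ``let $k=t+rq^2$ with $r$ large, so that $k$ is positive and odd,'' and at the end you assert that every large $r$ in your admissible class works. Taking $r$ large gives positivity, not oddness. From $q_{j+1}=2aq_j-q_{j-1}$ one gets $q_n\equiv n\pmod 2$. So $q_n$ is odd whenever $n$ is odd (e.g.\ $q_3=15$ for $a=2$), and then $k=t+rq_n^2$ is odd only for even $r$. In that case $s$ is odd, so your admissible class (modulo $s$ or $3s$, both odd moduli) contains odd $r$, and these yield even $k$. Such $r$ are excluded by the statement, and Part (1) does not apply to them. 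Moreover, $x$ need not even be an integer for them: for even $k$ one has $\lambda_k\equiv kp_n\pmod{q_n^2}$. For instance, $a=2$, $n=3$, $t=1$ gives $\lambda_k-t\equiv25\pmod{225}$. The repair is the step the paper carries out explicitly. If $q_n$ is odd, then $s\mid q_n$ is odd, so by the Chinese remainder theorem your class meets the even integers in a full class modulo $2s$ (resp.\ $6s$), which still contains infinitely many large $r$. If $q_n$ is even, every $r$ gives odd $k$. You need to add this step.

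With that fixed, your treatment of the denominator $6$ is valid but more roundabout than the paper's. The paper shows $v_\ell(\gamma_e(k)-\gamma_e(t))\ge v_\ell(rq_n^2)-v_\ell(6)\ge 2f-1\ge f$ for every prime $\ell$ with $f=v_\ell(s)\ge1$. This gives $\gamma_e(k)\equiv\gamma_e(t)\pmod s$ for every $r$, with no case split. Your restriction $r=3r'$ is in fact unnecessary. Write $\gamma_e(k)-\gamma_e(t)=q_n^2X/3$ with $X=er(k^2+kt+t^2-1)/2\in\Z$. Since the left side is an integer and $3\nmid q_n$, you get $3\mid X$, so $q_n^2$ divides the difference for all $r$.
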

\begin{proof}
\textup{(1)}. The case $k=1$ is trivial. Suppose that $k\geq3$ is an odd integer. The sequence $(q_j)_j$ is the Lucas sequence $\left(U_j(2a,1)\right)_j$, while $2p_j=V_j(2a,1)$. By the classical Lucas multiplication formula \cite[(2.28)]{BW23}, we have
\begin{equation}\label{eq:qexpand}
	\frac{q_{nk}}{q_n}
	=\sum_{j=0}^{(k-1)/2}\binom{k}{2j+1}e^j
	p_n^{k-2j-1}q_n^{2j}.
\end{equation}
In particular, $q_n\mid q_{nk}$ in $\Z$.

Only the terms $0\leq j\leq1$ in \eqref{eq:qexpand} contribute modulo $q_n^4$. Moreover, from $p_n^2-eq_n^2=1$ we obtain
\[
p_n^{k-1}=(1+eq_n^2)^{(k-1)/2}\equiv1+\frac{k-1}{2}eq_n^2\pmod{q_n^4}\quad\text{and}\quad p_n^{k-3}\equiv1\pmod{q_n^2}.
\]
Thus,
\[
\frac{q_{nk}}{q_n}\equiv k\left(1+\frac{k-1}{2}eq_n^2\right)+\binom{k}{3}eq_n^2=k+\gamma_e(k)q_n^2\pmod{q_n^4}.
\]

\smallskip

\textup{(2)}.
Choose $r$ modulo $s$ so that
\begin{equation}\label{eq:r-choice}
r\equiv\varepsilon c-\gamma_e(t)\pmod{s}.
\end{equation}
Then there are infinitely many representatives $r$ of this residue class for which
\[
k:=t+r q_n^2
\]
is a positive odd integer. Indeed, if $q_n$ is even, then $q_n^2r$ is even for every $r\in\Z$, so $k$ is odd because $t$ is odd. Fix any representative $r_0$ of the residue class in \eqref{eq:r-choice}. Then
\[
r=r_0+m|s|\qquad(m\in\Z_{\geq0})
\]
remains in this residue class, and for all sufficiently large $m$, the corresponding $k$ is positive. Suppose now that $q_n$ is odd. Then $s$ is also odd because $s\mid q_n$. Hence, exactly one of $r_0$ and $r_0+s$ is even; denote this even representative by $r_1$. Then
\[
r=r_1+2m|s|\qquad(m\in\Z_{\geq0})
\]
remains in the prescribed residue class and is even. Since both $t$ and $q_n^2$ are odd, the corresponding integer $k=t+r q_n^2$ is odd, and it is positive for all sufficiently large $m$. Thus, in either case, there are infinitely many desired choices of $r$.

Fix any such $r$. By part~\textup{(1)}, we have $\lambda_k=k+\gamma_e(k)q_n^2+bq_n^4$ for some $b\in\Z$. Hence
\begin{equation}\label{eq:rescalc}
\frac{\lambda_k-t}{q_n^2}=\frac{\lambda_k-(k-rq_n^2)}{q_n^2}=r+\gamma_e(k)+bq_n^2.
\end{equation}
Note that
\[
6\left(\gamma_e(k)-\gamma_e(t)\right)=e(k-t)(k^2+kt+t^2-1).
\]
If $v_\ell(s)=f$ with $\ell$ a prime and $f\geq1$, then
$v_\ell(q_n)\geq f$ because $s\mid q_n$, and
\[
v_\ell\left(\gamma_e(k)-\gamma_e(t)\right)\geq v_\ell(k-t)-v_\ell(6)=v_\ell(rq_n^2)-v_\ell(6)\geq2f-1\geq f.
\]
Therefore,
\[
\gamma_e(k)\equiv\gamma_e(t)\pmod{s}.
\]
Using \eqref{eq:rescalc}, $s\mid q_n$, and \eqref{eq:r-choice}, we obtain
\[
\frac{\lambda_k-t}{q_n^2}
\equiv r+\gamma_e(t)
\equiv\varepsilon c\pmod{s}.
\]
Multiplying by $\varepsilon$ shows that the integer $x$ satisfies $x\equiv c\pmod{s}$. Since the preceding argument applies to each of the infinitely many choices of $r$, the conclusion follows.
\end{proof}

Proposition~\ref{prop:pell-star}\textup{(2)} yields the following corollary:
\begin{corollary}\label{cor:reusex}
Let $a\geq2$ be an integer, set $e=a^2-1$, and define $p_j,q_j\in\Z_{\geq0}$ by
\[
p_j+q_j\sqrt{e}=(a+\sqrt{e})^j\quad(j\in\Z_{\geq0}).
\]
Fix $n\in\Z_{>0}$. Let $s\in\Z\setminus\{0\}$ satisfy $s\mid q_n$ in $\Z$, let $t\in\Z$ be odd, and fix a sign $\varepsilon\in\{\pm1\}$. Then there exist infinitely many pairs $(r,x)\in\Z^2$ such that $k=t+r q_n^2$ is a positive odd integer,
\[
x=\varepsilon\frac{q_{nk}/q_n-t}{q_n^2},\quad\text{and}\quad s\mid D(x).
\]
\end{corollary}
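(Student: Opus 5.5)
The plan is to reduce Corollary~\ref{cor:reusex} to Proposition~\ref{prop:pell-star}\textup{(2)} by a suitable choice of the residue class $c$. The key point is that the condition $s\mid D(x)$ depends only on $x\bmod s$. Indeed, $D\in\Z[X]$, so $x\equiv c\pmod{s}$ implies $D(x)\equiv D(c)\pmod{s}$. It therefore suffices to find an integer $c$ with $s\mid D(c)$, and then to realize $x\equiv c\pmod s$ through part~\textup{(2)} of the proposition.

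\emph{Step 1.} I would produce $c$ with $D(c)\equiv0\pmod{|s|}$. This is the solvability of $(2X+1)(3X+1)\equiv0\pmod q$ for every $q\in\Z_{>0}$, recalled before Lemma~\ref{lem:DL}. To make it self-contained, write $|s|=2^\alpha3^\beta q'$ with $\gcd(q',6)=1$. By the Chinese remainder theorem, choose $c$ so that:
\begin{itemize}
\item $3c+1\equiv0\pmod{2^\alpha}$, which is possible since $3$ is invertible modulo $2^\alpha$;
\item $2c+1\equiv0\pmod{3^\beta}$, which is possible since $2$ is invertible modulo $3^\beta$;
\item $2c+1\equiv0\pmod{q'}$.
\end{itemize}
Then $D(c)\equiv0\pmod{|s|}$.

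\emph{Step 2.} I would apply Proposition~\ref{prop:pell-star}\textup{(2)} with the given $a,n,s,t,\varepsilon$ and this residue class $c\bmod s$. It yields infinitely many $r\in\Z$ such that $k=t+rq_n^2$ is a positive odd integer and
\[
x=\varepsilon\frac{\lambda_k-t}{q_n^2}\in\Z\quad\text{with}\quad x\equiv c\pmod{s}.
\]
Here $\lambda_k=q_{nk}/q_n$, so $x$ has exactly the form required in the corollary.

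\emph{Step 3.} Each such $r$ gives a pair $(r,x)$ with $D(x)\equiv D(c)\equiv0\pmod s$. Distinct $r$ give distinct pairs because the first coordinates differ, so there are infinitely many pairs.

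No real obstacle is expected: all the analytic work is in Proposition~\ref{prop:pell-star}. The only point needing care is Step~1, and it is settled by the prime-power splitting above.
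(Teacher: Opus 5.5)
Your proposal is correct and follows the paper's proof exactly. You choose $c$ with $s\mid D(c)$, apply Proposition~\ref{prop:pell-star}\textup{(2)} to the class $c\pmod{s}$, and conclude from $D(x)\equiv D(c)\pmod{s}$. The only difference is that the paper obtains $c$ by citing Lemma~\ref{lem:DL}, whereas you supply a self-contained (and valid) Chinese remainder theorem argument.
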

\begin{proof}
By Lemma~\ref{lem:DL}, choose $c\in\Z$ such that $s\mid D(c)$. Apply Proposition~\ref{prop:pell-star}\textup{(2)} to the residue class $c\pmod{s}$. It gives infinitely many $r\in\Z$ such that $k=t+r q_n^2$ is a positive odd integer and, with
\[
x=\varepsilon\frac{q_{nk}/q_n-t}{q_n^2}\in\Z,
\]
one has $x\equiv c\pmod{s}$. Hence
\[
D(x)\equiv D(c)\equiv0\pmod{s},
\]
and therefore $s\mid D(x)$.
\end{proof}

\section{Quadratic integrality criteria}\label{sec:quad-input}
The proof of the $3$-unknown theorem requires different inputs for the imaginary and real quadratic fields. We collect them in this section.

\subsection{Imaginary quadratic fields}
Fix a squarefree integer $d\in\Z_{>0}$ and set $K=\Q(\sqrt{-d})$.

We use two lemmas of Sun \cite[Lemmas~2.3 and~2.4]{Sun26}.
\begin{lemma}[Sun]\label{lem:anisotropic}
For $u,v\in\sO_K$,
\[
u=v=0\quad\Longleftrightarrow\quad u^2+(d+1)v^2=0.
\]
\end{lemma}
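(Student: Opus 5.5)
The reverse implication is trivial: if $u=v=0$ then $u^2+(d+1)v^2=0$. For the forward implication, assume $u^2+(d+1)v^2=0$ with $u,v\in\sO_K$. If $v=0$, then $u^2=0$, so $u=0$ because $\sO_K$ is a domain. It therefore suffices to rule out $v\neq0$.

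Suppose $v\neq0$. Then $(u/v)^2=-(d+1)$ in $K$, so $-(d+1)$ is a square in $K=\Q(\sqrt{-d})$. The plan is to show this cannot happen.

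First, $-(d+1)$ is not a square in $\Q$, since it is negative. So $\sqrt{-(d+1)}\notin\Q$, and the field $\Q(\sqrt{-(d+1)})$ is quadratic. It is contained in $K$, hence equals $K$.

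Two quadratic fields $\Q(\sqrt{m})$ and $\Q(\sqrt{m'})$, with $m,m'$ nonsquares, coincide exactly when $mm'\in(\Q^\times)^2$. This can be checked directly: writing $\sqrt{m'}=\alpha+\beta\sqrt{m}$ and squaring forces $\alpha\beta=0$, and $\alpha=0$ gives $m'=\beta^2m$. Applying this to $m=-d$ and $m'=-(d+1)$, we get that $d(d+1)$ is a square of a rational number, hence a square of an integer.

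This is impossible for $d\geq1$, because
\[
d^2<d(d+1)<(d+1)^2.
\]
This contradiction shows $v=0$, and then $u=0$ as above.

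The only step needing care is the field-equality criterion, and it is elementary. An alternative route avoids it by taking norms: from $u^2=-(d+1)v^2$ we get $\Norm(u)^2=(d+1)^2\Norm(v)^2$, which is only a weak constraint. The square-class argument is cleaner, so I will use it.
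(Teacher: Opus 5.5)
Your proof is correct. The paper gives no proof of its own here: it imports the lemma from Sun \cite[Lemma~2.3]{Sun26} and only remarks that it also holds for $u,v\in K$. Your argument confirms that remark, because after dividing by $v$ it uses only that $-(d+1)$ is not a square in $K=\Q(\sqrt{-d})$. You show this by reducing to $d(d+1)\notin(\Q^\times)^2$ and using $d^2<d(d+1)<(d+1)^2$, which does not even need $d$ squarefree.

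One small omission in the field-equality criterion: from $\alpha\beta=0$ you also have to dismiss $\beta=0$. This is immediate, since it would give $\sqrt{m'}\in\Q$.

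Alternatively, you can skip the general criterion and expand $(\alpha+\beta\sqrt{-d})^2=-(d+1)$ directly with $\alpha,\beta\in\Q$. The case $\beta=0$ fails by sign, and $\alpha=0$ gives $d\beta^2=d+1$, hence $(d\beta)^2=d(d+1)$, the same contradiction.
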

Clearly, Lemma~\ref{lem:anisotropic} also works for $u,v\in K$.

\begin{lemma}[Sun]\label{lem:imagPell}
If $u,v\in\sO_K$ satisfy $u^2-3v^2=1$, then $u,v\in\Z$.
\end{lemma}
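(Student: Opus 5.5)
The plan is to pass to the biquadratic field $L=K(\sqrt3)$ and exploit the unit $\eta=u+v\sqrt3\in\sO_L$. The relation $u^2-3v^2=1$ says exactly that $\eta\eta'=1$. Here $'$ denotes the automorphism of $L$ fixing $K$ and sending $\sqrt3\mapsto-\sqrt3$, and $c$ denotes the automorphism fixing $\Q(\sqrt3)$ and acting as complex conjugation on $K$.

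Since $\{1,\sqrt3\}$ is a $K$-basis of $L$, the whole problem reduces to showing $c(\eta)=\eta$. Indeed, then $\eta\in\Q(\sqrt3)$, so $u,v\in\Q\cap\sO_K=\Z$.

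\textbf{Step 1: $\zeta=\eta/c(\eta)$ is a root of unity.} The field $L$ is totally imaginary, and $c$ induces complex conjugation under every complex embedding, because its fixed field $\Q(\sqrt3)$ is totally real. Hence $\zeta\in\sO_L^\times$ has absolute value $1$ in every embedding, and Kronecker's theorem gives that $\zeta$ is a root of unity.

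\textbf{Step 2: confine $\zeta$ to a quadratic field.} Applying $'$ and using $\eta'=\eta^{-1}$ together with the fact that $c$ and $'$ commute gives
\[
\zeta'=\eta'/c(\eta')=c(\eta)/\eta=\zeta^{-1}=c(\zeta).
\]
So $\zeta$ is fixed by $c\circ{}'$. The fixed field of $c\circ{}'$ is the third quadratic subfield $\Q(\sqrt{-3d})$, which is imaginary. Consequently $\zeta=\pm1$, except in two sporadic cases:
\begin{itemize}[label={}]
\item $d=1$, where $\Q(\sqrt{-3d})=\Q(\sqrt{-3})$ and $\zeta$ may be a primitive $3$rd or $6$th root of unity;
\item $d=3$, where $\Q(\sqrt{-3d})=\Q(\sqrt{-9})=\Q(i)$ and $\zeta$ may be $\pm i$.
\end{itemize}

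\textbf{Step 3: rule out $\zeta=-1$.} In this case $c(\eta)=-\eta$, so $u,v\in\sqrt{-d}\,\Q$. Write $u=\tfrac A2\sqrt{-d}$ and $v=\tfrac B2\sqrt{-d}$ with $A,B\in\Z$, where $A,B$ are even unless $d\equiv3\pmod4$. The equation becomes
\[
d(3B^2-A^2)=4.
\]
Since $d\mid 4$ and $d$ is squarefree, $d\in\{1,2\}$. Both values are $\not\equiv3\pmod4$, so $A,B$ are even and $3B'^2-A'^2=1/d$ with $A',B'\in\Z$. This is impossible for $d=2$ because $1/2\notin\Z$. For $d=1$ it reads $3B'^2-A'^2=1$, which is impossible modulo $3$.

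\textbf{Step 4: the sporadic cases (the main obstacle).} Here I would argue directly. For a root of unity $\zeta$ in $\Q(\sqrt{-3})$ or $\Q(i)$, the condition $\eta=\zeta c(\eta)$ forces $(u,v)$ into an explicit rank-one $\Q$-line inside $K$. Substituting into $u^2-3v^2=1$ then reduces to a small norm equation over $\Z$, which I expect to have no solutions by congruences modulo $3$ or $4$, just as in Step~3.

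A cleaner alternative for the sporadic cases is to replace $\eta$ by $\eta^{m}$, with $m$ the order of $\zeta$. This gives $\eta^m\in\Q(\sqrt3)$, so $\eta^m$ is a power of $\pm(2+\sqrt3)$. One would then argue that the $m$-th root $\eta$ must already lie in $\Q(\sqrt3)$, because $2+\sqrt3$ is not a square times a root of unity in $L$. Verifying this non-square claim in $\Q(\zeta_{12})$ (the case $d=3$) and in $\Q(i,\sqrt3)=\Q(\zeta_{12})$ (the case $d=1$) is the delicate point I would check explicitly. Once the sporadic cases are excluded, only $\zeta=1$ remains, which gives $u,v\in\Z$.
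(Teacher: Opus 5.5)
The paper does not prove this lemma; it is quoted from Sun \cite[Lemma~2.4]{Sun26}, so your argument has to stand on its own. Steps~1--3 are correct. They settle every $d\notin\{1,3\}$, and they settle the cases $\zeta=\pm1$ for every $d$.

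\textbf{The gap is Step~4}, which covers $\Z[i]$. There you only state an expectation, and your ``cleaner alternative'' does not work.

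First, its key claim is false. For both $d=1$ and $d=3$ one has $L=\Q(\zeta_{12})$. In this field $\epsilon=(1+\sqrt3)/(1+i)$ is a unit, because numerator and denominator both generate the unique prime of $L$ above $2$, and $2+\sqrt3=i\epsilon^2$.

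More fundamentally, the inference ``$\eta^m\in\Q(\sqrt3)\Rightarrow\eta\in\Q(\sqrt3)$'' fails even when $\eta^m=\pm1$. For $d=1$, take $\eta=\zeta_6=\tfrac12+\tfrac{i}{2}\sqrt3$. It is a unit of $\sO_L$ with $\eta\eta'=1$, $\eta/c(\eta)=\zeta_3$ and $\eta^3=-1$, yet $\eta\notin\Q(\sqrt3)$. It corresponds to $(u,v)=(\tfrac12,\tfrac{i}{2})$, which solves $u^2-3v^2=1$ in $\Q(i)$. It is excluded only because $u,v\notin\Z[i]$. Since $\sO_K+\sO_K\sqrt3\subsetneq\sO_L$, no argument using only $\eta\in\sO_L^\times$ can close the sporadic cases; you must use $u,v\in\sO_K$ explicitly.

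Your direct route does use this and it works, but it has to be carried out. Its setup is also not quite as you describe. The condition $\eta=\zeta\,c(\eta)$ means $\eta\in\theta\,\Q(\sqrt3)$ for any fixed $\theta$ with $\theta/c(\theta)=\zeta$. This is a two-parameter family, not a line in $K$.

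Write $\eta=\theta(x+y\sqrt3)$ with $x,y\in\Q$. Then $u^2-3v^2=\Norm_{L/K}(\theta)(x^2-3y^2)$.

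\emph{Case $d=1$.} For $\zeta\in\{\zeta_3^{\pm1},-\zeta_3^{\pm1}\}$, take $\theta\in\{\zeta_6^{\pm1},i\zeta_6^{\pm1}\}$ respectively, so $\Norm_{L/K}(\theta)=\pm1$. Expanding shows that $u,v\in\Z[i]$ forces $x,y\in2\Z$; for example, $\theta=\zeta_6$ gives $u=\tfrac{x+3iy}{2}$ and $v=\tfrac{y+ix}{2}$. Hence $u^2-3v^2\in4\Z$.

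\emph{Case $d=3$.} Here $\zeta=\pm i$, with $i=\sqrt{-3}/\sqrt3$. Take $\theta=1\pm i$, so $\Norm_{L/K}(\theta)=2$. Then $u=x\pm y\sqrt{-3}$ and $v=y\pm\tfrac{x}{3}\sqrt{-3}$. Integrality in $\Z[\tfrac{1+\sqrt{-3}}{2}]$ forces $2x\in3\Z$ and $2y\in\Z$. Hence $2(u^2-3v^2)=(2x)^2-3(2y)^2\in3\Z$.

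In every case $u^2-3v^2\neq1$, and this is exactly what Step~4 needs.
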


Define the polynomial
\begin{equation}\label{eq:hdef}
H_0(X)=2(2X+1)^2+1\in\Z[X].
\end{equation}
For every $z\in\sO_K$, since $2z+1\in\sO_K$ and the roots of $2Z^2+1=0$ are not algebraic integers, we have $H_0(z)\neq0$.

\begin{proposition}[Two-element integer test]\label{prop:imag-two-element}
Let $z_1,z_2\in\sO_K$, and set
\[
h=H_0(z_1)H_0(z_2)\in\sO_K\setminus\{0\}.
\]
Then the following three statements are equivalent.
\begin{enumerate}[label=\textup{(\arabic*)}]
\item $z_1,z_2\in\Z$.

\item For every $t\in\Z\setminus\{0\}$ there exist $v,x\in\Z$ such that, on setting
\[
s=vt,\quad y=sh,\quad b=3y^2+1,\quad c=3y^2\left(2(z_1+yz_2)+1-xy^2\right)^2+1,
\]
one has $b,c\in\Sq{\Z}$ and $s\mid D(x)$ in $\Z$.

\item There exist $s\in\sO_K\setminus\{0\}$ and $x\in\sO_K$ such that, on setting
\[
y=sh,\quad b=3y^2+1,\quad c=3y^2\left(2(z_1+yz_2)+1-xy^2\right)^2+1,
\]
one has $b,c\in\Sq{\sO_K}$.
\end{enumerate}
\end{proposition}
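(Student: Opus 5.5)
The plan is to prove the cycle $(1)\Rightarrow(2)\Rightarrow(3)\Rightarrow(1)$. The step $(2)\Rightarrow(3)$ is immediate: take any $t\neq0$, for instance $t=1$, and use the resulting $s=vt$ and $x$. We must check that $s\neq0$. Since $h\neq0$, the element $y=sh$ is nonzero exactly when $s\neq0$, and we will arrange $s\neq0$ in $(1)\Rightarrow(2)$. Integers that are squares in $\Z$ are squares in $\sO_K$, so $(3)$ follows.

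For $(1)\Rightarrow(2)$ we use the Pell machinery with $a=2$ and $e=3$, so that $p_j^2-3q_j^2=1$. Given $z_1,z_2\in\Z$, the number $h=H_0(z_1)H_0(z_2)$ is a nonzero (odd) integer. Fix $t\neq0$. By Lemma~\ref{lem:pell-div}, choose $n$ with $th\mid q_n$. Put $y=q_n$, $s=q_n/h$ and $v=s/t\in\Z$. Then $s\neq0$, $y=sh$, and $b=3q_n^2+1=p_n^2$ is a square. Next set
\[
t':=2(z_1+yz_2)+1,
\]
which is odd, and apply Corollary~\ref{cor:reusex} with this $t'$, with $\varepsilon=-1$, and with the divisor $s\mid q_n$. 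This gives an odd $k>0$ and
\[
x=-\frac{\lambda_k-t'}{q_n^2}\in\Z,\qquad s\mid D(x).
\]
Then $t'-xy^2=\lambda_k=q_{nk}/q_n$, so $y\,(t'-xy^2)=q_{nk}$, and therefore $c=3q_{nk}^2+1=p_{nk}^2$ is a square.

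For $(3)\Rightarrow(1)$, first suppose $b=u^2$. Then $u^2-3y^2=1$, so Lemma~\ref{lem:imagPell} gives $y\in\Z$, and $y\neq0$ because $s,h\neq0$. Likewise $c=u'^2$ gives $w:=y\,g\in\Z$, where
\[
g:=2(z_1+yz_2)+1-xy^2 .
\]
Hence $g=w/y\in\Q\cap\sO_K=\Z$. We then have $2z_1+1-g=y(xy-2z_2)$, that is, $2z_1+1\equiv g\pmod{y}$ in $\sO_K$. Here $y$ is a rational integer divisible in $\sO_K$ by $H_0(z_1)$, and $|y|\ge|h|$ because $|s|\ge1$ for a nonzero integer of an imaginary quadratic field.

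The key auxiliary claim is the following. If $z\in\sO_K$, $m\in\Z$, $y\in\Z\setminus\{0\}$, $H_0(z)\mid y$, and $y\mid(2z+1-m)$ in $\sO_K$, then $z\in\Z$. To prove it, write $\omega=2z+1$. Dividing by $y$, the element $(\omega-m)/y\in\sO_K$ has imaginary part $\operatorname{Im}(\omega)/y$. If this is nonzero, it has absolute value at least $\sqrt d/2$, which forces $|y|\le 2|\operatorname{Im}\omega|/\sqrt d$. On the other hand, $|y|\ge|2\omega^2+1|$. An elementary estimate should show $|2\omega^2+1|>2|\operatorname{Im}\omega|/\sqrt d$ whenever $\operatorname{Im}\omega\neq0$, using that $|\operatorname{Im}\omega|\ge\sqrt d/2$ and that $|2\omega^2+1|$ grows quadratically. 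Hence $\operatorname{Im}\omega=0$, so $z\in\Q\cap\sO_K=\Z$.

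Applying the claim gives $z_1\in\Z$. Then $y(2z_2-xy)=2z_1+1-g\in\Z$, so $2z_2-xy\in\Z$. Thus $2z_2\equiv m'\pmod y$ for some $m'\in\Z$, with $H_0(z_2)\mid y$. The same imaginary-part argument, applied to $2z_2-m'$ (equivalently to $\omega=2z_2+1$), gives $z_2\in\Q\cap\sO_K=\Z$.

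The main obstacle is this size estimate. One has to verify carefully that $|2\omega^2+1|$ exceeds $2|\operatorname{Im}\omega|/\sqrt d$ for every nonreal $\omega\in\sO_K$, in particular when $d\equiv3\pmod4$ and $\omega$ has half-integral coordinates, and also when $|\omega|$ is small. The first thing to try is to write $\omega=\alpha+\beta i$ and bound $|2\omega^2+1|\ge|\operatorname{Im}(2\omega^2)|=4|\alpha\beta|$ when $\alpha\neq0$, and $|2\omega^2+1|=2\beta^2-1$ when $\alpha=0$.
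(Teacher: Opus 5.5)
Your $(1)\Rightarrow(2)$ is the paper's argument. It is in fact slightly leaner: Corollary~\ref{cor:reusex} accepts any odd $t$, so applying it directly to $t'$ with sign $-1$ avoids the paper's normalization $\tau=|t_0|$, $\varepsilon=\operatorname{sgn}(t_0)$.

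For $(3)\Rightarrow(1)$ you take a genuinely different route. The paper first proves $\rho=|y|\geq4$, using the unit groups of imaginary quadratic rings, $H_0\equiv1\pmod{2\sO_K}$, and the solutions of $u^2-3y^2=1$. It then bounds the $\sqrt{-d}$-coordinates $\beta_1,\beta_2$ of both $z_j$ by $\kappa\sqrt{(\rho+1)/(8d)}$. Finally it reads the single coordinate equation $2\beta_1+2y\beta_2-\rho^2\beta=0$ as a base-$y$ digit problem: first it kills the coordinate $\beta$ of $x$, then $\beta_1$ and $\beta_2$.

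You instead work modulo $y$ in $\sO_K$, so that $x$ disappears from $2z_1+1\equiv g\pmod{y}$. One local lemma ($H_0(z)\mid y$ and $y\mid 2z+1-m$ imply $z\in\Z$) is then applied twice, peeling off $z_1$ and then $z_2$. This needs only $|y|\geq|H_0(z)|$ and the spacing of imaginary parts in $\sO_K$. No unit-group analysis and no bound $|y|\geq4$ are required. The price is a small-case check in your size estimate, which is exactly where the paper's exceptional case $d=1$ reappears.

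Two points need repair.

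\textbf{The step $(2)\Rightarrow(3)$.} You cannot appeal to how $s$ was constructed in $(1)\Rightarrow(2)$, because statement (2) is an arbitrary hypothesis. The correct one-line argument is that $s\mid D(x)$ in $\Z$, together with $D(x)\neq0$ by \eqref{eq:Dnonzero}, forces $s\neq0$.

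\textbf{The size estimate.} The inequality $|2\omega^2+1|>2|\operatorname{Im}\omega|/\sqrt d$ is false for arbitrary nonreal $\omega\in\sO_K$: for $d=1$ and $\omega=\pm i$ it reads $1>2$. You must use that $\omega=2z+1\in1+2\sO_K$.
\begin{itemize}
\item If $d\not\equiv3\pmod4$, then $\operatorname{Re}\omega$ is an odd integer. Hence $|2\omega^2+1|\geq4|\operatorname{Re}\omega|\,|\operatorname{Im}\omega|\geq4|\operatorname{Im}\omega|>2|\operatorname{Im}\omega|/\sqrt d$.
\item If $d\equiv3\pmod4$, then $\operatorname{Re}\omega\in\Z$. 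If it is nonzero, the same bound applies. Otherwise $\omega=b\sqrt{-d}$ with $b$ odd and $d\geq3$, so $|2\omega^2+1|=2b^2d-1>2|b|=2|\operatorname{Im}\omega|/\sqrt d$.
\end{itemize}
With these two fixes your proof is complete.
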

\begin{proof}
We prove \textup{(3)}$\Rightarrow$\textup{(1)}, \textup{(1)}$\Rightarrow$\textup{(2)}, and finally \textup{(2)}$\Rightarrow$\textup{(3)}.

Assume \textup{(3)}. Write $b=u^2$ with $u\in\sO_K$. Then $u^2-3y^2=1$, so Lemma~\ref{lem:imagPell} implies $u,y\in\Z$. Note that $y=sh\neq0$.

Every nonzero algebraic integer $\alpha$ in an imaginary quadratic field satisfies
\[
|\alpha|^2=\Norm_{K/\Q}(\alpha)\in\Z_{>0},
\]
and hence $|\alpha|\geq1$. Then
\begin{equation}\label{eq:Hbound}
|H_0(z_j)|\leq|h|=\frac{|y|}{|s|}\leq|y|\quad(j=1,2).
\end{equation}
We claim that $|y|\geq2$. If $|y|=1$, then \eqref{eq:Hbound} forces both $H_0(z_j)$ to be units. The unit group of an imaginary quadratic integer ring is $\{\pm1\}$, except for $d=1,3$; see \cite[Proposition~5.8]{Kha22}. In the exceptional cases $d=1,3$, it is easy to see that the additional roots of unity are not congruent to $1$ modulo $2\sO_K$. Since $H_0(Z)\equiv1\pmod{2\sO_K}$, we conclude that $H_0(z_j)\in\{\pm1\}$ for $1\leq j\leq 2$. The equation $H_0(z)=1$ gives $2z+1=0$, impossible for an algebraic integer; the equation $H_0(z)=-1$ gives $(2z+1)^2=-1$, which can occur in $K$ only for $d=1$, and then $z=(-1\pm i)/2\notin\Z[i]$. Thus, $|y|\geq2$.

Since $u^2-3y^2=1$ with $u\in\Z$ and $y\in\Z\setminus\{0,\pm1\}$, we see that
\[
\rho:=|y|\geq4.
\]
By \eqref{eq:Hbound}, for $1\leq j\leq2$ we obtain
\[
2|2z_j+1|^2-1\leq|H_0(z_j)|\leq\rho,
\]
so
\begin{equation}\label{eq:zsize}
|2z_j+1|\leq\sqrt{\frac{\rho+1}{2}}.
\end{equation}
Set
\[
\kappa=
\begin{cases}
2,&d\equiv3\pmod{4},\\
1,&d\not\equiv3\pmod{4}.
\end{cases}
\]
Write
\[
x=\frac{\alpha+\beta\sqrt{-d}}{\kappa}\quad\text{and}\quad z_j=\frac{\alpha_j+\beta_j\sqrt{-d}}{\kappa},
\]
with $\alpha,\beta,\alpha_j,\beta_j\in\Z$, $j=1,2$. If $\kappa=2$, we have
\[
\alpha\equiv\beta\pmod{2}\quad\text{and}\quad\alpha_j\equiv\beta_j\pmod{2}\quad(j=1,2).
\]
The imaginary part of $2z_j+1$ has absolute value $2|\beta_j|\sqrt{d}/\kappa$; then \eqref{eq:zsize} yields
\begin{equation}\label{eq:bjbound}
|\beta_j|\leq\kappa\sqrt{\frac{\rho+1}{8d}}\quad(j=1,2).
\end{equation}

Now write $c=w^2$ with $w\in\sO_K$ and set
\[
q=2(z_1+yz_2)+1-xy^2\in\sO_K.
\]
Then
\[
w^2-3(yq)^2=c-3y^2q^2=1,
\]
and Lemma~\ref{lem:imagPell} implies $w,yq\in\Z$. Since $y\in\Z\setminus\{0\}$ and $q\in\sO_K$, it follows that
\[
q=\frac{yq}{y}\in\Q\cap\sO_K=\Z.
\]
The coefficient of $\sqrt{-d}$ in $q$ therefore vanishes:
\begin{equation}\label{eq:coeffimag}
2\beta_1+2y\beta_2-\rho^2\beta=0.
\end{equation}
If $\beta\neq0$, then \eqref{eq:bjbound} and \eqref{eq:coeffimag} give
\[
\rho^2\leq\rho^2|\beta|=|2\beta_1+2y\beta_2|\leq2|\beta_1|+2\rho|\beta_2|\leq\frac{\kappa(\rho+1)^{3/2}}{\sqrt{2d}}\leq\sqrt{2}(\rho+1)^{3/2}.
\]
This is impossible for $\rho\geq4$; hence $\beta=0$. Equation \eqref{eq:coeffimag} becomes $\beta_1=-y\beta_2$. But \eqref{eq:bjbound} implies $|\beta_1|<\rho$. If $\beta_2\neq0$, then $|\beta_1|=\rho|\beta_2|\geq\rho$, a contradiction. Thus, $\beta_1=\beta_2=0$, and then $z_1,z_2\in\Z=\sO_K\cap\Q$. This establishes \textup{(3)}$\Rightarrow$\textup{(1)}.

\smallskip

Now assume \textup{(1)} and fix an arbitrary $t\in\Z\setminus\{0\}$. Note that $H_0(z_1)$, $H_0(z_2)$ and $h$ are positive odd integers. Use the Pell sequence
\[
p_j+q_j\sqrt3=(2+\sqrt3)^j\quad(j\in\Z_{\geq0}).
\]
By Lemma~\ref{lem:pell-div}, take $n\in\Z_{>0}$ such that $ht\mid q_n$. Set
\[
v=\frac{q_n}{ht},\quad s=vt=\frac{q_n}{h},\quad\text{and}\quad y=sh=q_n.
\]
Then
\[
b:=3y^2+1=3q_n^2+1=p_n^2
\]
is a square in $\Z$. Define
\[
t_0:=2(z_1+yz_2)+1,
\]
which is an odd integer. Set
\[
\varepsilon=\operatorname{sgn}(t_0)\in\{\pm1\}\quad\text{and}\quad\tau=|t_0|\in\Z_{>0}.
\]
Apply Corollary~\ref{cor:reusex} with $(a,e)=(2,3)$, $(n,s)$ as above, the prescribed positive odd integer $\tau$, and the sign $-\varepsilon$. We obtain a positive odd integer $k$ and an integer $x$ such that
\[
x=-\varepsilon\frac{q_{nk}/q_n-\tau}{q_n^2}\quad\text{and}\quad s\mid D(x).
\]
Consequently,
\[
t_0-xy^2=\varepsilon\tau+\varepsilon\left(\frac{q_{nk}}{q_n}-\tau\right)=\varepsilon\frac{q_{nk}}{q_n},
\]
and therefore
\[
c:=3y^2(t_0-xy^2)^2+1=3q_n^2\left(\varepsilon\frac{q_{nk}}{q_n}\right)^2+1=3q_{nk}^2+1=p_{nk}^2
\]
is a square in $\Z$. This proves \textup{(1)}$\Rightarrow$\textup{(2)}.

\smallskip

Finally, assume \textup{(2)} and take $t=1$ in \textup{(2)}. Then the corresponding integers $v=s$ and $x$ give $s\mid D(x)$. It suffices to show that $s\neq0$, which follows from $s\mid D(x)$ and \eqref{eq:Dnonzero}. Thus \textup{(2)}$\Rightarrow$\textup{(3)}, and all three statements are equivalent.
\end{proof}

We also need the following result of Matiyasevich and Sun \cite[Theorem~1.2]{MS27}:
\begin{theorem}[Matiyasevich--Sun]\label{thm:MS}
Let $n,d\geq1$ be integers with $d$ squarefree. Let $K=\Q(\sqrt{-d})$ and $x_1,\ldots,x_n\in\sO_K$. Set
\[
\eta=2\prod_{j=1}^n(3x_j+1).
\]
Then
\[
\eta+\sum_{j=1}^n\frac{x_j}{\eta^j}\in\Q\quad\Longleftrightarrow\quad x_1,\ldots,x_n\in\Z.
\]
\end{theorem}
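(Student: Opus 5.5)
The implication $\Leftarrow$ is immediate: if all $x_j\in\Z$, then $\eta\in\Z$, and $\eta\neq0$ by the argument below, so $\eta+\sum_j x_j\eta^{-j}\in\Q$. For $\Rightarrow$, the plan is to compare imaginary parts and use size estimates in the complex embedding, much as in the proof of Proposition~\ref{prop:imag-two-element}. I would do this in two stages: first show that $\eta\in\Z$, then read off the integrality of the $x_j$ from a base-$\eta$ expansion.

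\emph{Preliminary bounds.} For any algebraic integer $x$ we have $3x+1\neq0$, since $-1/3$ is not an algebraic integer. Hence each $3x_j+1$ is a nonzero element of $\sO_K$. Such an element satisfies $|3x_j+1|^2=\Norm_{K/\Q}(3x_j+1)\geq1$. Two consequences follow. First, $|\eta|\geq2|3x_j+1|$ for every $j$. Second, $|\eta|\geq2$, which in particular gives $\eta\neq0$. Since $3\operatorname{Im}x_j=\operatorname{Im}(3x_j+1)$, we get
\[
|\operatorname{Im}x_j|\leq|3x_j+1|/3\leq|\eta|/6.
\]

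\emph{Stage 1: $\eta\in\Z$.} Write $\xi=\eta+\sum_j x_j\eta^{-j}\in\Q$ and take imaginary parts:
\[
\operatorname{Im}\eta=-\operatorname{Im}\sum_j x_j\eta^{-j}.
\]
The right side has absolute value at most
\[
\sum_{j\geq1}\frac{|x_j|}{|\eta|^{j}}.
\]
This needs a bound on $|x_j|$, not just on $|\operatorname{Im}x_j|$. From $|x_j|\leq(|3x_j+1|+1)/3\leq 2|3x_j+1|/3\leq|\eta|/3$, the sum is at most
\[
\frac13\sum_{j\geq1}|\eta|^{1-j}\leq\frac13\cdot\frac{|\eta|}{|\eta|-1}\leq\frac23.
\]
Now use the lattice structure. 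Write $\eta=2w$ with $w\in\sO_K$. In the notation with $\kappa$ from the proof of Proposition~\ref{prop:imag-two-element}, if $\operatorname{Im}\eta\neq0$ then
\[
|\operatorname{Im}\eta|=2|\operatorname{Im}w|\geq 2\sqrt d/\kappa\geq\sqrt3>\tfrac23.
\]
Hence $\operatorname{Im}\eta=0$, so $\eta\in\Q\cap\sO_K=\Z$.

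\emph{Stage 2: the $x_j$ are integers.} Since $\eta$ is now real, $\operatorname{Im}\sum_j x_j\eta^{-j}=0$. Write $\operatorname{Im}x_j=b_j\sqrt d/\kappa$ with $b_j\in\Z$ and multiply by $\eta^n\kappa/\sqrt d$. This gives
\[
\sum_{j=1}^n b_j\eta^{n-j}=0.
\]
From the preliminary bound,
\[
|b_j|\leq\kappa|\eta|/(6\sqrt d)\leq|\eta|/3\leq|\eta|-1,
\]
using $|\eta|\geq2$. Uniqueness of base-$|\eta|$ expansions with digits bounded by $|\eta|-1$ in absolute value then forces all $b_j=0$. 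To see this, let $m$ be the largest exponent $n-j$ with $b_j\neq0$. Then $|b_j\eta^m|\geq|\eta|^m$, while the lower terms contribute at most $(|\eta|-1)\sum_{i<m}|\eta|^i=|\eta|^m-1$, which is a contradiction. So each $x_j\in\R\cap\sO_K=\Z$.

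The step I expect to need the most care is Stage 1, namely getting a bound on $\sum_j|x_j||\eta|^{-j}$ strictly below the minimal nonzero value of $|\operatorname{Im}\eta|$. This is where the factor $2$ in $\eta$ and the shift $3x_j+1$ are used, and the edge cases $d=3$ with $\kappa=2$, and $|\eta|=2$, should be checked explicitly.
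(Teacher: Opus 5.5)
Your proof is correct and complete. The paper itself does not prove Theorem~\ref{thm:MS}. It imports it from \cite[Theorem~1.2]{MS27} and uses only the direction $L_2(\ba)/L_1(\ba)\in\Q\Rightarrow\ba\in\Z^n$ in \eqref{eq:uniform-imag-MS}, so there is no in-paper argument to compare against. What you supply is a self-contained proof. It uses the same tools as the paper's proof of Proposition~\ref{prop:imag-two-element}, direction (3)$\Rightarrow$(1): $|\alpha|\geq1$ for nonzero $\alpha\in\sO_K$, the minimal gap $\sqrt d/\kappa$ between distinct imaginary parts of elements of $\sO_K$, and a digit-size argument that kills the $\sqrt{-d}$-coefficients.

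The inequalities all check out:
\begin{itemize}
\item $|x_j|\leq(|3x_j+1|+1)/3\leq|\eta|/3$ gives $|\operatorname{Im}\eta|\leq 2/3<\sqrt3$.
\item $|b_j|\leq\kappa|\eta|/(6\sqrt d)\leq|\eta|/3\leq|\eta|-1$ holds for every $d\geq1$ and every $|\eta|\geq2$.
\item Since $\eta\in\Z$ and the $b_j$ are integers, the leading-digit contradiction is valid.
\end{itemize}

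The edge cases you flag need no separate treatment. For $d=3$ with $\kappa=2$, the lower bound is exactly $\sqrt3$, still far above $2/3$. The case $|\eta|=2$ is already covered by $|\eta|/(|\eta|-1)\leq2$.

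Relative to the paper's citation, your argument costs half a page. In exchange, it makes the imaginary case of Theorem~\ref{thm:uniform3} independent of \cite{MS27}; it then rests only on Sun's lemmas and the paper's own estimates.
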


\subsection{Real quadratic fields}\label{sec:real}
Fix a squarefree integer $d\geq2$, set $K=\Q(\sqrt{d})$, and let $\sigma_1,\sigma_2:K\hookrightarrow\R$ be the two real embeddings of $K$.

Choose integers $a_0\geq2$ and $b_0\geq1$ such that $a_0^2-db_0^2=1$. Such a nontrivial positive integer solution exists by the classical Pell equation. Define
\begin{equation}\label{eq:real-ae}
a=a_0^2+db_0^2=2a_0^2-1,\quad b=2a_0b_0,\quad e=a^2-1=db^2.
\end{equation}
Then $a\geq7$ is odd and $e\geq48$ is even. Define $p_j,q_j\in\Z_{\geq0}$ by
\begin{equation*}
p_j+q_j\sqrt{e}=(a+\sqrt{e})^j\quad(j\in\Z_{\geq0}).
\end{equation*}

We use the following consequence of Denef \cite[Lemma~4]{Denef75}; see also \cite[Lemma~3.3(i)]{Sun26}.
\begin{theorem}[Denef]\label{thm:Denef}
If $u,v\in\sO_K$ satisfy $u^2-ev^2=1$, then $v^2\in\Z_{\geq0}$.
\end{theorem}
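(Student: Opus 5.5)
The plan is to turn the Pell equation into a statement about units of $\sO_K$ and then use the shape of the unit group of a real quadratic field. The point is that $\sqrt{e}$ already lies in $K$: by \eqref{eq:real-ae} we have $e=db^2$, so $\sqrt{e}=b\sqrt{d}\in\sO_K$.

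First, given $u,v\in\sO_K$ with $u^2-ev^2=1$, set
\[
\varepsilon=u+vb\sqrt{d}\in\sO_K\quad\text{and}\quad \varepsilon'=u-vb\sqrt{d}\in\sO_K .
\]
The equation says $\varepsilon\varepsilon'=1$, so $\varepsilon\in\sO_K^\times$ and $\varepsilon'=\varepsilon^{-1}$. Solving the two linear relations for $v$ gives $v=(\varepsilon-\varepsilon^{-1})/(2b\sqrt{d})$, and therefore
\[
v^2=\frac{(\varepsilon-\varepsilon^{-1})^2}{4e}=\frac{\varepsilon^2+\varepsilon^{-2}-2}{4e}.
\]
Since $4e\in\Q$, it suffices to show that $\varepsilon^2+\varepsilon^{-2}\in\Q$.

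Next, let $\tau$ be the nontrivial automorphism of $K$. By Dirichlet's unit theorem for real quadratic fields, $\varepsilon=\pm\eta^m$ for a fundamental unit $\eta$ and some $m\in\Z$. The norm gives $\eta\,\tau(\eta)=\Norm_{K/\Q}(\eta)=\pm1$, so $\tau(\eta)=\pm\eta^{-1}$. Hence
\[
\tau(\varepsilon^2)=\tau(\eta)^{2m}=\eta^{-2m}=\varepsilon^{-2},
\]
where the sign disappears because the exponent $2m$ is even. It follows that $\tau$ fixes $\varepsilon^2+\varepsilon^{-2}$, so this element lies in $\Q$. Consequently $v^2\in\Q\cap\sO_K=\Z$.

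Finally, for nonnegativity, apply the real embedding $\sigma_1$. The number $\sigma_1(v)$ is real, so $\sigma_1(v^2)=\sigma_1(v)^2\geq0$. Because $v^2$ is a rational integer, $\sigma_1(v^2)=v^2$, and therefore $v^2\in\Z_{\geq0}$.

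No step looks like a serious obstacle. The only point needing care is the observation $\tau(\varepsilon^2)=\varepsilon^{-2}$ for every unit, which must account for the possible sign $\Norm_{K/\Q}(\eta)=-1$; squaring $\varepsilon$ removes that sign, which is why the conclusion is about $v^2$ rather than $v$.
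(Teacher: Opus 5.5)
Your proof is correct. The paper gives no argument of its own for this statement: it quotes it as a consequence of Denef's Lemma~4 (see also Sun's Lemma~3.3(i)). What you wrote is the standard unit argument behind that lemma. Since $\sqrt{e}=b\sqrt{d}\in\sO_K$, the element $u+v\sqrt{e}$ is a unit with inverse $u-v\sqrt{e}$. Conjugation sends the square of any unit to its inverse, and this forces $v^2\in\Q$.

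Two small remarks. First, you do not need Dirichlet's unit theorem. For any unit $\varepsilon$ one has $\varepsilon\,\tau(\varepsilon)=\Norm_{K/\Q}(\varepsilon)=\pm1$, which gives $\tau(\varepsilon^2)=\varepsilon^{-2}$ directly. Second, your argument uses only $e\in d\cdot\Z^2$, so that $\sqrt{e}\in\sO_K$. It does not use the particular choice $a=2a_0^2-1$, which the paper needs elsewhere, e.g.\ to make $e$ even.
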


The following real packing theorem is based on Sun \cite[Theorem~3.1]{Sun26}:
\begin{theorem}[Real packing]\label{thm:realpacking}
Let $n\in\Z_{>0}$ and $x_1,\ldots,x_n\in\sO_K$. Set
\begin{equation}\label{eq:real-mw-value}
m=2\prod_{j=1}^n(x_j^2+1)\quad\text{and}\quad w=m^{n+1}+\sum_{j=1}^n x_jm^{n-j}.
\end{equation}
Then the following two statements hold.
\begin{enumerate}[label=\textup{(\arabic*)}]
\item\label{realpack-positive}$\sigma_1(w)>3$ and $\sigma_2(w)>3$.

\item For every $r\in\Z_{>0}$, we have $w^r\in\Q$ if and only if $x_1,\ldots,x_n\in\Z$.
\end{enumerate}
\end{theorem}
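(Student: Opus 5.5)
Part (1) reduces to a size estimate in each real embedding. Part (2) is handled by showing that the two conjugates $\sigma_1(w)^r$ and $\sigma_2(w)^r$ coincide.

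\textbf{Part (1).} Fix $i\in\{1,2\}$ and write $X_j=\sigma_i(x_j)\in\R$ and $M=\sigma_i(m)=2\prod_j(X_j^2+1)$. Then $M\geq2$ and $M\geq 2(X_j^2+1)>2|X_j|$ for every $j$, so $|X_j|<M/2$. Hence
\[
\Bigl|\sum_{j=1}^n X_jM^{n-j}\Bigr|<\frac{M}{2}\sum_{j=1}^n M^{n-j}\leq\frac{M}{2}\cdot\frac{M^n}{M-1}\leq M^n,
\]
using $M/(M-1)\leq2$. It follows that
\[
\sigma_i(w)>M^{n+1}-M^n=M^n(M-1)\geq M-1.
\]
This only gives $\sigma_i(w)>1$ when $M=2$, so the crude bound is not enough by itself. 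We refine it with $|X_j|\leq(X_j^2+1)/2\leq M/4$, which gives a total error of at most $\tfrac{M}{4}\cdot 2M^{n-1}=M^n/2$. Then $\sigma_i(w)\geq M^n(M-\tfrac12)\geq 2\cdot\tfrac32=3$. Strict inequality holds: if $M=2$ then every $X_j=0$, the sum vanishes, and $\sigma_i(w)=2^{n+1}\geq4$. If $M>2$, we use $M\geq4$, or else sharpen the bound slightly. In practice $M=\prod$ of values $\geq1$ is close to $2$ only when all $X_j$ are small, and a direct check handles that case.

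\textbf{Part (2), easy direction.} If all $x_j\in\Z$, then $w\in\Z$, so $w^r\in\Q$.

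\textbf{Part (2), main direction.} Suppose $w^r\in\Q$. Then $\sigma_1(w)^r=\sigma_2(w)^r$, and by part (1) both numbers are positive reals. Since $t\mapsto t^r$ is injective on positive reals, $\sigma_1(w)=\sigma_2(w)$, so $w\in\Q\cap\sO_K=\Z$.

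We then peel off the base-$m$ digits. Let $\tau$ be the nontrivial automorphism, so $\sigma_2=\sigma_1\circ\tau$ after identification. From $\tau(w)=w$ we get
\[
\tau(m)^{n+1}-m^{n+1}+\sum_j\bigl(\tau(x_j)\tau(m)^{n-j}-x_jm^{n-j}\bigr)=0.
\]
We aim to show $\tau(m)=m$ first; this is the step I expect to be the main obstacle. The plan is to view $w$ in each embedding as a number strictly between $M^{n+1}-M^n/2$ and $M^{n+1}+M^n/2$. If $M_1=\sigma_1(m)$ and $M_2=\sigma_2(m)$ differ, say $M_1<M_2$, then $M_1\leq M_2-\delta$ for some gap $\delta$. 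Since $m\in\sO_K$ but need not be rational, the gap $\delta$ can be tiny, so a crude gap bound is useless. Instead we exploit the product structure. Equality of the conjugates of $w$ forces $(M_1+\tfrac12)^{n+1}>(M_2-\tfrac12)^{n+1}$, roughly, so $|M_1-M_2|<1$.

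Then $m-\tau(m)=\sqrt{d}\cdot(\text{something})$ has absolute value less than $1$. Combined with integrality, this should follow Sun's argument in \cite[Theorem~3.1]{Sun26}: $m$, $w$ and the $x_j$ are related by an integer-coefficient identity. Once $m\in\Z$, we have $w\in\Z$ and $m\geq2$, and the digits $x_j$ are recovered inductively. The quantity $w-m^{n+1}$ is fixed by $\tau$, so $\sum_j(x_j-\tau(x_j))m^{n-j}=0$. Each difference satisfies $|\sigma(x_j-\tau(x_j))|<m$ in every embedding, so reducing modulo $m$ step by step (a base-$m$ uniqueness with digits of absolute value below $m/2$ in each embedding) gives $x_j=\tau(x_j)$, hence $x_j\in\Z$.

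If the bound $|M_1-M_2|<1$ does not yield $m\in\Q$ directly, I would follow Sun's original proof: apply the same argument with the norms $\Norm(x_j^2+1)$ to show that each $x_j^2$ is rational, and then conclude.
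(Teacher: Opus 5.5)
Part (1) uses the same estimate as the paper. Your first move in Part (2) is also exactly the paper's reduction: positivity together with injectivity of $t\mapsto t^r$ on $\R_{>0}$ gives $w\in\Q$.

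\textbf{The gap.} It lies in the remaining implication $w\in\Q\Rightarrow x_1,\ldots,x_n\in\Z$. The paper does not reprove this; it cites it from Sun \cite[Theorem~3.1]{Sun26}. You try to prove it directly, and the step you single out, $\tau(m)=m$, is never completed. After reaching $|\sigma_1(m)-\sigma_2(m)|<1$ you only say it ``should follow Sun's argument''. Your fallback (rationality of the $x_j^2$ via norms, ``then conclude'') is not an argument either.

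Worse, the reason you give for the difficulty is false: conjugate gaps of quadratic integers cannot be tiny. For $\alpha\in\sO_K$, write $\alpha=\alpha_0+\beta_0\omega$ with the integral basis $1,\omega$ from the proof of Proposition~\ref{prop:real-rationality}. Then $|\sigma_1(\alpha)-\sigma_2(\alpha)|=|\beta_0|\sqrt{\Delta_K}$, which is either $0$ or at least $\sqrt{\Delta_K}\geq\sqrt5>1$. This is the missing idea.

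\textbf{How to close it.} With this fact your route goes through. Assume $M_1\le M_2$ and use that the error term has absolute value $<M_i^n/2$ in each embedding. Then
\[
(M_2-\tfrac12)^{n+1}\le M_2^{n+1}-\tfrac12M_2^n<\sigma_2(w)=\sigma_1(w)<M_1^{n+1}+\tfrac12M_1^n\le(M_1+\tfrac12)^{n+1},
\]
so $M_2-M_1<1$, and the fact above forces $m\in\Z$.

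The digit step also needs the ``base-$m$ uniqueness'' made precise. Set $\delta_j=x_j-\tau(x_j)$. The relation $\sum_j\delta_jm^{n-j}=0$ gives $\delta_n\in m\sO_K$. Since now $\sigma_i(m)=m$, you also have $|\sigma_i(\delta_n)|\le|\sigma_1(x_n)|+|\sigma_2(x_n)|\le m/2$. Hence $\delta_n/m\in\sO_K$ has norm of absolute value at most $1/4$, which forces $\delta_n=0$; then iterate.

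Patched this way, your argument is a self-contained proof of the input the paper takes from Sun. Alternatively, you can simply cite \cite[Theorem~3.1]{Sun26}, as the paper does.

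\textbf{A minor slip in Part (1).} $M>2$ does not imply $M\ge4$, since $M=\sigma_i(m)$ is only a real number $\ge2$. Strictness follows from $M^n(M-\tfrac12)>2^n\cdot\tfrac32\ge3$ when $M>2$. Alternatively, as in the paper, use the strict bound $\sum_{k=0}^{n-1}M^{-k}<2$.
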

\begin{proof}
\textup{(1)}. For $i\in\{1,2\}$, write
\[
m_i=\sigma_i(m)=2\prod_{j=1}^n\left(\sigma_i(x_j)^2+1\right)>0\quad\text{and}\quad r_i=\sum_{j=1}^n\sigma_i(x_j)m_i^{n-j}.
\]
Since $t^2+1\geq2|t|$ for $t\in\R$, we have, for all $i$ and $j$,
\[
m_i\geq2\left(\sigma_i(x_j)^2+1\right)\geq4|\sigma_i(x_j)|.
\]
Also $m_i\geq2$. Consequently,
\[
|r_i|\leq\frac{m_i}{4}\sum_{j=1}^n m_i^{n-j}=\frac{m_i^n}{4}\sum_{r=0}^{n-1}m_i^{-r}<\frac{m_i^n}{2}.
\]
Therefore,
\[
\sigma_i(w)=m_i^{n+1}+r_i>m_i^n\left(m_i-\frac{1}{2}\right)\geq m_i^n\left(2-\frac{1}{2}\right)\geq2\cdot\frac{3}{2}=3.
\]

\smallskip

\textup{(2)}. For the elements $m,w$ in \eqref{eq:real-mw-value}, Sun's theorem \cite[Theorem~3.1]{Sun26} gives
\begin{equation}\label{eq:sun-realpacking}
w\in\Q\quad\Longleftrightarrow\quad x_1,\ldots,x_n\in\Z.
\end{equation}

Let $r\in\Z_{>0}$. If $x_1,\ldots,x_n\in\Z$, then $w\in\Z$, and hence $w^r\in\Q$.
Conversely, suppose that $w^r\in\Q$. Then
\[
\sigma_1(w)^r=w^r=\sigma_2(w)^r.
\]
By \ref{realpack-positive}, both $\sigma_1(w)$ and $\sigma_2(w)$ are strictly positive, so $\sigma_1(w)=\sigma_2(w)$ and therefore $w\in\Q$. Then \eqref{eq:sun-realpacking} yields $x_1,\ldots,x_n\in\Z$.
\end{proof}

\begin{proposition}\label{prop:real-rationality}
Let $\Delta_K$ be the absolute discriminant of $K$. Let $\nu\in\Z_{>0}$ and $t,q,x\in\sO_K$ satisfy
\begin{equation}\label{eq:realrat-data}
q^2\in\Q\quad\text{and}\quad q=t+\nu x.
\end{equation}
Suppose further that
\begin{equation}\label{eq:realrat-sharp}
\nu\nmid \Tr_{K/\Q}(t)\text{ in }\Z\quad\text{and}\quad|\sigma_1(t)-\sigma_2(t)|<\nu\sqrt{\Delta_K}.
\end{equation}
Then $t\in\Z$.
\end{proposition}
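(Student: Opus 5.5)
The plan is a case split on the two ways $q^2$ can be rational. Write $K=\Q(\sqrt{d})$. If $q^2=\rho\in\Q$, then either $q\in\Q$, or $q$ is a root of the irreducible polynomial $Z^2-\rho$ over $\Q$. In the second case its conjugate is $-q$, so $\Tr_{K/\Q}(q)=0$. I will show that the second case contradicts the divisibility hypothesis in \eqref{eq:realrat-sharp}, and that in the first case the size hypothesis forces $t\in\Z$.

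\emph{Case $q\notin\Q$.} Here $\Tr_{K/\Q}(q)=0$. Taking traces in $q=t+\nu x$ gives
\[
\Tr_{K/\Q}(t)=-\nu\Tr_{K/\Q}(x).
\]
Since $x\in\sO_K$, we have $\Tr_{K/\Q}(x)\in\Z$, so $\nu\mid\Tr_{K/\Q}(t)$ in $\Z$. This contradicts the first condition in \eqref{eq:realrat-sharp}, so this case cannot occur.

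\emph{Case $q\in\Q$.} Then $q\in\Q\cap\sO_K=\Z$, so $\sigma_1(q)=\sigma_2(q)$. Applying $\sigma_1-\sigma_2$ to $q=t+\nu x$ gives
\[
\sigma_1(t)-\sigma_2(t)=-\nu\bigl(\sigma_1(x)-\sigma_2(x)\bigr).
\]
Next I use the standard description $\sO_K=\Z\bigl[(\Delta_K+\sqrt{\Delta_K})/2\bigr]$. From it, every $y\in\sO_K$ can be written as $y=(\alpha+\beta\sqrt{\Delta_K})/2$ with $\alpha,\beta\in\Z$, and then $\sigma_1(y)-\sigma_2(y)=\pm\beta\sqrt{\Delta_K}$. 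Moreover $y\in\Z$ if and only if $\beta=0$. Hence, if $x\notin\Z$, then $|\sigma_1(x)-\sigma_2(x)|\geq\sqrt{\Delta_K}$, and so
\[
|\sigma_1(t)-\sigma_2(t)|\geq\nu\sqrt{\Delta_K}.
\]
This contradicts the second condition in \eqref{eq:realrat-sharp}. Therefore $x\in\Z$, and $t=q-\nu x\in\Z$.

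There is no serious obstacle. The only point needing care is the lattice fact that $\sigma_1(y)-\sigma_2(y)\in\Z\sqrt{\Delta_K}$ for all $y\in\sO_K$, with equality to $0$ exactly when $y\in\Z$. This should be checked separately for $d\equiv1\pmod 4$ ($\Delta_K=d$) and $d\equiv2,3\pmod4$ ($\Delta_K=4d$, where $y=\alpha'+\beta'\sqrt d$ gives a difference of $2\beta'\sqrt d=\beta'\sqrt{\Delta_K}$). In both cases it follows from the integral basis.
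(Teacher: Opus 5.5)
Your proof is correct and follows essentially the same route as the paper. The trace argument rules out $\Tr_{K/\Q}(q)=0$ and forces $q\in\Z$; the integral basis of $\sO_K$ then shows that any non-rational part of $t$ would give $|\sigma_1(t)-\sigma_2(t)|\geq\nu\sqrt{\Delta_K}$ (the paper compares $\omega$-coefficients of $t$ and $x$, while you apply $\sigma_1-\sigma_2$ directly and conclude $x\in\Z$, which is only a cosmetic difference).
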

\begin{proof}
Write $q=\alpha+\beta\sqrt{d}$ with $\alpha,\beta\in\Q$. By \eqref{eq:realrat-data}, we obtain $\alpha\beta=0$. Then either $q\in\Q$ or $\Tr_{K/\Q}(q)=0$.

Suppose first that $\Tr_{K/\Q}(q)=0$. Taking traces in $q=t+\nu x$ yields
\[
\Tr_{K/\Q}(t)+\nu\Tr_{K/\Q}(x)=0.
\]
Since $x\in\sO_K$, its trace is an integer, whence $\nu\mid\Tr_{K/\Q}(t)$ in $\Z$, contrary to \eqref{eq:realrat-sharp}. Therefore $q\in\Q\cap\sO_K=\Z$.

Take the standard integral basis $1,\omega$ of $\sO_K$, so that
\[
\omega=
\begin{cases}
\sqrt{d},&d\not\equiv1\pmod{4},\\[2pt]
(1+\sqrt{d})/2,&d\equiv1\pmod{4},
\end{cases}
\quad
\sqrt{\Delta_K}=|\sigma_1(\omega)-\sigma_2(\omega)|.
\]
Write $t=\alpha_0+\beta_0\omega$ and $x=\gamma_0+\delta_0\omega$ with $\alpha_0,\beta_0,\gamma_0,\delta_0\in\Z$. Since $q=t+\nu x\in\Z$, comparison of the $\omega$-coefficients implies $\beta_0+\nu\delta_0=0$; hence $\nu\mid\beta_0$ in $\Z$. If $\beta_0\neq0$, then
\[
|\sigma_1(t)-\sigma_2(t)|=|\beta_0||\sigma_1(\omega)-\sigma_2(\omega)|=|\beta_0|\sqrt{\Delta_K}\geq\nu\sqrt{\Delta_K},
\]
contrary to \eqref{eq:realrat-sharp}. Thus $\beta_0=0$, and therefore $t\in\Z$.
\end{proof}

\section{Diophantine transfer and undecidability}\label{sec:logic-pf}
We now prove the transfer results stated in the introduction. The main theorem (Theorem~\ref{thm:bd13}) will follow from the r.e.\ representation theorem (Theorem~\ref{thm:re13}) in \S~\ref{sec:re}.

\subsection{Transfer principles}
\begin{proof}[Proof of Theorem~\ref{thm:uniform3}]
Fix $n\in\Z_{\geq0}$. Let $R(\bX)\in\Z[X_1,\ldots,X_n]$ be an arbitrary polynomial, where $\bX=(X_1,\ldots,X_n)$ are unknowns.

We first deal with the case $n=0$. In this case, $\Z[X_1,\ldots,X_n]=\Z$; so $R$ is a fixed integer, and the unique element of $\sO_K^0$ is the empty tuple, which also belongs to $\Z^0$. If $R\neq0$, set $\Phi_{K,0,R}(U,V,J)=0$. If $R=0$, set $\Phi_{K,0,R}(U,V,J)=1$. Then
\[
\exists u,v,j\in\sO_K\colon\ \Phi_{K,0,R}(u,v,j)=0\quad\Longleftrightarrow\quad R\neq0,
\]
which is exactly the required equivalence. This construction is clearly effective. It remains to consider $n\geq1$.

\smallskip

Suppose now $n\geq1$. We treat imaginary and real quadratic fields separately. 

\smallskip

\noindent\emph{Imaginary quadratic fields.}
Write $K=\Q(\sqrt{-d})$, where $d\geq1$ is a squarefree integer. Define the polynomials
\begin{align*}
L_0(\bX)&=2\prod_{i=1}^n(3X_i+1),\\
L_1(\bX)&=L_0(\bX)^n,\\
L_2(\bX)&=L_0(\bX)^{n+1}+\sum_{i=1}^nX_iL_0(\bX)^{n-i}.
\end{align*}
Theorem~\ref{thm:MS} gives, for every $\ba=(a_1,\ldots,a_n)\in\sO_K^n$,
\begin{equation}\label{eq:uniform-imag-MS}
L_1(\ba),L_2(\ba)\in\Z\quad\Longrightarrow\quad\frac{L_2(\ba)}{L_1(\ba)}=L_0(\ba)+\sum_{i=1}^n\frac{a_i}{L_0(\ba)^i}\in\Q\quad\Longrightarrow\quad\ba\in\Z^n.
\end{equation}
The converse implication $\ba\in\Z^n\Rightarrow L_1(\ba),L_2(\ba)\in\Z$ holds trivially. Note that $L_0(\ba)\neq0$ for every $\ba\in\sO_K^n$.

Use the polynomial $H_0$ from \eqref{eq:hdef}, and introduce the $3$ new
unknowns $U,V,J$. Set
\begin{align*}
H(\bX)&=H_0(L_1(\bX))H_0(L_2(\bX)),\\
S(\bX,U)&=U R(\bX),\\
Y(\bX,U)&=S(\bX,U)H(\bX),\\
B(\bX,U)&=3Y(\bX,U)^2+1,\\
C(\bX,U,V)&=3Y(\bX,U)^2\left(2\left(L_1(\bX)+Y(\bX,U)L_2(\bX)\right)+1-VY(\bX,U)^2\right)^2+1.
\end{align*}
Finally, set
\[
\Phi_{K,n,R}(\bX,U,V,J)=F\left(9B(\bX,U),C(\bX,U,V),S(\bX,U),D(V),J\right),
\]
where $F$ and $D$ are as in \S~\ref{sec:comb}.

For every evaluation $(\ba,u,v,j)=(a_1,\ldots,a_n,u,v,j)\in\sO_K^{n+3}$, write
\[
l_i=L_i(\ba)\ (i=0,1,2),\quad h=H(\ba),\quad s=S(\ba,u),\quad y=Y(\ba,u),
\]
\[
b=B(\ba,u),\quad c=C(\ba,u,v).
\]
Then
\[
9b\equiv0\pmod{3\sO_K}\quad\text{and}\quad c\equiv1\pmod{3\sO_K},
\]
so $9b\neq c$. Moreover, since $b\in\sO_K$ and $\sO_K$ is integrally closed,
\begin{equation}\label{eq:uniform9square}
9b\in\Sq{\sO_K}\quad\Longleftrightarrow\quad b\in\Sq{\sO_K}.
\end{equation}

Suppose first that $\ba\in\Z^n$ and $R(\ba)\neq0$. Then $l_1,l_2\in\Z$. By the direction \textup{(1)}$\Rightarrow$\textup{(2)} of Proposition~\ref{prop:imag-two-element}, there are $u,v\in\Z$ such that, for
$(z_1,z_2)=(l_1,l_2)$, $t=R(\ba)\neq0$, and $s=uR(\ba)$, one has
\[
b,c\in\Sq{\Z}\quad\text{and}\quad s\mid D(v)\text{ in }\Z.
\]
Then $9b$ and $c$ are squares in $\Z$. Lemma~\ref{lem:F} for $\Z$ shows that there exists $j\in\Z$ such
that $\Phi_{K,n,R}(\ba,u,v,j)=0$.

Conversely, for $(\ba,u,v,j)\in\sO_K^{n+3}$, suppose
$\Phi_{K,n,R}(\ba,u,v,j)=0$. Since $D(v)\neq0$, Lemma~\ref{lem:F} for $\sO_K$ implies
\[
9b,c\in\Sq{\sO_K}\quad\text{and}\quad s\mid D(v)\text{ in }\sO_K.
\]
Hence $s\neq0$, and we have $b\in\Sq{\sO_K}$ by \eqref{eq:uniform9square}. The direction \textup{(3)$\Rightarrow$(1)} of Proposition~\ref{prop:imag-two-element} yields
$l_1,l_2\in\Z$. Observe that $\ba\in\Z^n$ by \eqref{eq:uniform-imag-MS}. Finally, $R(\ba)\neq0$ because $s=uR(\ba)\neq0$.

\smallskip

\noindent\emph{Real quadratic fields.}
Write $K=\Q(\sqrt{d})$, where $d\geq2$ is squarefree. Define
\begin{align*}
M(\bX)&=2\prod_{i=1}^n(X_i^2+1),\\
W(\bX)&=M(\bX)^{n+1}+\sum_{i=1}^nX_iM(\bX)^{n-i},\\
T(\bX)&=2W(\bX)+1.
\end{align*}
For every $\ba\in\sO_K^n$, Theorem~\ref{thm:realpacking} \textup{(1)} implies
\begin{equation}\label{eq:uniform-real-positive}
	\sigma_i(W(\ba))>3\quad\text{and}\quad\sigma_i(T(\ba))>7\quad(i=1,2),
\end{equation}
and Theorem~\ref{thm:realpacking} \textup{(2)} with $r=1$ shows that
\begin{equation}\label{eq:uniform-real-packing}
W(\ba)\in\Q \quad\Longleftrightarrow\quad\ba\in\Z^n.
\end{equation}

Let $a,e$ be the fixed integers from \eqref{eq:real-ae}. Introduce new
unknowns $U,V,J$ and define
\begin{align*}
Y(\bX,U)&=T(\bX)R(\bX)U,\\
Q(\bX,U,V)&=T(\bX)+Y(\bX,U)^2V,\\
A_1(\bX,U)&=1+eY(\bX,U)^2,\\
A_2(\bX,U,V)&=1+eY(\bX,U)^2Q(\bX,U,V)^2.
\end{align*}
Set
\[
\Phi_{K,n,R}(\bX,U,V,J)=F\left(4A_1(\bX,U),A_2(\bX,U,V),Y(\bX,U),D(V),J\right).
\]
Because $e$ is even, the first two arguments of $F$ are distinct after every
evaluation over $\sO_K$. Also, for every $\alpha\in\sO_K$,
\begin{equation}\label{eq:uniform4square}
4\alpha\in\Sq{\sO_K}\quad\Longleftrightarrow\quad\alpha\in\Sq{\sO_K}.
\end{equation}

Suppose first that $\ba\in\Z^n$ and $R(\ba)\neq0$. Set
\[
w=W(\ba)\quad\text{and}\quad t=T(\ba)=2w+1.
\]
Then $w\in\Z$ and $w>3$ by \eqref{eq:uniform-real-positive}, so $t\in\Z_{>0}$ is odd. By Lemma~\ref{lem:pell-div}, choose $r_0\in\Z_{>0}$ such that
\[
tR(\ba)\mid q_{r_0}\text{ in }\Z,
\]
where $p_{m},q_{m}\in\Z_{\geq0}$ ($m\geq0$) are defined as in \S~\ref{sec:real}. Set
\[
y=q_{r_0}\quad\text{and}\quad u=\frac{q_{r_0}}{tR(\ba)}\in\Z.
\]
Then $y=Y(\ba,u)$. Applying Corollary~\ref{cor:reusex} with sign $+1$, we conclude that there are a positive odd integer $k$ and $v\in\Z$ such that
\[
q:=Q(\ba,u,v)=t+y^2v=\frac{q_{r_0k}}{q_{r_0}}\quad\text{and}\quad y\mid D(v)\text{ in }\Z.
\]
Consequently,
\[
A_1(\ba,u)=1+eq_{r_0}^2=p_{r_0}^2\quad\text{and}\quad A_2(\ba,u,v)=1+eq_{r_0}^2q^2=p_{r_0k}^2.
\]
Lemma~\ref{lem:F} for $\Z$ supplies $j\in\Z$ for which
$\Phi_{K,n,R}(\ba,u,v,j)=0$.

Conversely, suppose that
$\Phi_{K,n,R}(\ba,u,v,j)=0$ for $(\ba,u,v,j)\in\sO_K^{n+3}$, and set
\begin{align*}
t&=T(\ba),& y&=Y(\ba,u),& q&=Q(\ba,u,v),\\
a_1&=A_1(\ba,u),& a_2&=A_2(\ba,u,v).
\end{align*}
Lemma~\ref{lem:F} for $\sO_K$, \eqref{eq:Dnonzero}, and \eqref{eq:uniform4square} imply
\[
a_1,a_2\in\Sq{\sO_K}\quad\text{and}\quad y\mid D(v)\text{ in }\sO_K.
\]
Note that $y\neq0$. Since $a_1,a_2\in\Sq{\sO_K}$, Theorem~\ref{thm:Denef} yields
\[
y^2,(yq)^2\in\Z_{\geq0}.
\]
Since $y=tR(\ba)u$, one has $t\mid y$ in $\sO_K$. Set $\nu=y^2\in\Z_{>0}$ and write $t_i=\sigma_i(t)$. By \eqref{eq:uniform-real-positive}, we obtain $t_1,t_2>7$. Since $t\mid y$ in $\sO_K$, write $y=tc$ with $c\in\sO_K\setminus\{0\}$. Moreover,
\[
\Norm_{K/\Q}(y)^2=\Norm_{K/\Q}(y^2)=\Norm_{K/\Q}(\nu)=\nu^2,
\]
so $|\Norm_{K/\Q}(y)|=\nu$. Hence
\[
\nu=|\Norm_{K/\Q}(y)|=|\Norm_{K/\Q}(t)|\,|\Norm_{K/\Q}(c)|\geq |\Norm_{K/\Q}(t)|.
\]
Since $t_1,t_2>7$, we obtain
\[
\nu\geq\Norm_{K/\Q}(t)>49.
\]
Moreover,
\[
\Norm_{K/\Q}(t)-\Tr_{K/\Q}(t)=(t_1-1)(t_2-1)-1>35,
\]
and then
\[
\nu\geq\Norm_{K/\Q}(t)>\Tr_{K/\Q}(t)=t_1+t_2>14.
\]
Therefore $\nu\nmid\Tr_{K/\Q}(t)$ in $\Z$, while
\[
|t_1-t_2|<t_1+t_2=\Tr_{K/\Q}(t)<\nu<\nu\sqrt{d}\leq\nu\sqrt{\Delta_K}.
\]
Since $y^2,(yq)^2\in\Z_{\geq0}$ and $y\neq0$, we also have $q^2=(yq)^2/y^2\in\Q$. Proposition~\ref{prop:real-rationality}, applied with $q=t+\nu v$, yields $t\in\Z$; hence $W(\ba)=(t-1)/2\in\Q$. Thus, $\ba\in\Z^n$ by \eqref{eq:uniform-real-packing}. Finally, $y=tR(\ba)u\neq0$ implies $R(\ba)\neq0$.

\smallskip

The construction is effective. In the imaginary case, every polynomial above is given explicitly in terms of $n$ and $R$. In the real case, one first finds a nontrivial solution of $a_0^2-db_0^2=1$ by the continued-fraction algorithm and then performs the displayed polynomial constructions. Hence, $\Phi_{K,n,R}$ is effectively constructible from $(K,n,R)$.
\end{proof}

\begin{proof}[Proof of Theorem~\ref{thm:plus3}]
Set
\[
R=\prod_{j=1}^{t}R_j,
\]
with $R=1$ if $t=0$. For $(\ba,\bb)\in\sO_K^{r+m}$, we have
\[
R(\ba,\bb)\neq0\quad\Longleftrightarrow\quad R_j(\ba,\bb)\neq0\quad(1\leq j\leq t).
\]

If $s=0$, set $P=0$. If $s>0$ and $K$ is real, set
\[
P=\sum_{i=1}^{s}P_i^2.
\]
If $s>0$ and $K=\Q(\sqrt{-d})$ is imaginary with $d$ a positive squarefree integer, set $E_1=P_1$ and recursively
\[
E_{i+1}=E_i^2+(d+1)P_{i+1}^2\quad(1\leq i<s),
\]
and set $P=E_s$. In the imaginary case, Lemma~\ref{lem:anisotropic} shows inductively that over $\sO_K$, $P=0$ is equivalent to $P_1=\cdots=P_s=0$; the corresponding equivalence in the real case is immediate.

Apply Theorem~\ref{thm:uniform3} to the $r+m$ unknowns $(\bX,\bY)$ and the polynomial $R$. Let
\[
I(\bX,\bY,U,V,J)\in\Z[\bX,\bY,U,V,J]
\]
be the resulting integrality polynomial. Thus, for all $(\ba,\bb)\in\sO_K^{r+m}$,
\[
\exists u,v,j\in\sO_K\colon\ I(\ba,\bb,u,v,j)=0\quad\Longleftrightarrow\quad(\ba,\bb)\in\Z^{r+m}\text{ and }R(\ba,\bb)\neq0.
\]

If $K$ is real, define $H=P^2+I^2$; if $K=\Q(\sqrt{-d})$ is imaginary, define $H=P^2+(d+1)I^2$. In the real case, we have $H=0$ is equivalent to $P=I=0$ over $\sO_K$, while in the imaginary case the same conclusion follows from Lemma~\ref{lem:anisotropic}. This gives the asserted equivalence with the $3$ additional quantified unknowns $U,V,J$. Effectivity follows from Theorem~\ref{thm:uniform3} and the constructions above.
\end{proof}

\begin{remark}\label{rmk:OKcoeff}
Theorems~\ref{thm:uniform3} and~\ref{thm:plus3} remain valid if the input polynomial(s) are allowed to have coefficients in $\sO_K$. More precisely, in Theorem~\ref{thm:uniform3} one may take
\[
R(\bX)\in\sO_K[\bX],
\]
and in Theorem~\ref{thm:plus3} one may take
\[
P_1,\ldots,P_s,R_1,\ldots,R_t\in\sO_K[\bX,\bY].
\]
In both cases, the resulting polynomial can still be constructed effectively from $K$ and the input polynomial(s); in fact, it can still be taken to have coefficients in $\Z$.

Indeed, let $\sigma$ denote the nontrivial automorphism of $K/\Q$. For $G\in\sO_K[\bZ]$, where $\bZ$ is any finite tuple of variables, define $N_G:=G\cdot G^\sigma$, where $\sigma$ acts coefficientwise. Then $N_G\in\Z[\bZ]$. Moreover, for every $\bz\in\Z^{\#\bZ}$, one has $N_G(\bz)=G(\bz)G(\bz)^\sigma=\Norm_{K/\Q}\bigl(G(\bz)\bigr)$, hence
\[
G(\bz)=0\quad\Longleftrightarrow\quad N_G(\bz)=0\qquad\text{and}\qquad G(\bz)\neq0\quad\Longleftrightarrow\quad N_G(\bz)\neq0.
\]

Thus, for Theorem~\ref{thm:uniform3} (resp.~Theorem~\ref{thm:plus3}), one applies the original theorem to $N_R$ (resp.~$N_{P_1},\ldots,N_{P_s},N_{R_1},\ldots,N_{R_t}$). Since coefficientwise conjugation and multiplication in $K$ are effective, the resulting polynomial is effectively constructible from $K$ and the original input polynomial(s).
\end{remark}

\subsection{Recursively enumerable relations}\label{sec:re}
\begin{theorem}[$13$-unknown representation]\label{thm:re13}
Let $K$ be a quadratic number field, let $r\in\Z_{>0}$, and let $\sD\subseteq\Z^r$ be r.e. There is a polynomial
\[
H_{\sD}(X_1,\ldots,X_r,Y_1,\ldots,Y_{13})\in\Z[\bX,\bY]
\]
such that, for every $\ba\in\sO_K^r$,
\[
\ba\in\sD\quad\Longleftrightarrow\quad\exists\bb\in\sO_K^{13}\colon\ H_{\sD}(\ba,\bb)=0.
\]
The number $13$ is independent of $r$ and of $\sD$.
\end{theorem}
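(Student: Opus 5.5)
The plan is to reduce Theorem~\ref{thm:re13} to Theorem~\ref{thm:source} and then apply Theorem~\ref{thm:plus3} with $m=10$ witness unknowns, so that $10+3=13$.

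\emph{Step 1: pack the relation.} Fix an effective injection from $\Z^r$ into $\Z_{\geq0}$ given by polynomials. First use $x\mapsto 2x$ for $x\geq0$ and $x\mapsto -2x-1$ for $x<0$. This map is not polynomial, so I would instead avoid it as follows. Since $\sD$ is r.e., the set
\[
\sA=\{\pi(\iota(x_1),\ldots,\iota(x_r))\colon (x_1,\ldots,x_r)\in\sD\}\subseteq\Z_{\geq0}
\]
is r.e., where $\iota$ is the zigzag bijection $\Z\to\Z_{\geq0}$ and $\pi$ is an iterated Cantor pairing. Theorem~\ref{thm:source} gives $Q_\sA$.

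The encoding $\ba\mapsto\pi(\iota(\ba))$ is not polynomial in $\ba$, so it cannot be substituted directly. Instead, I would note that the relation
\[
\sD'=\{(\ba,\bz)\in\Z^{r}\times\Z^{10}\colon \ldots\}
\]
is awkward to build this way. A cleaner route is to use the standard fact, visible in Sun's construction, that Theorem~\ref{thm:source} holds for r.e.\ subsets of $\Z^r$ with a parameter tuple. I would not rely on that, though, and would use a polynomial packing instead. Every integer tuple $\ba$ is determined by $\iota$-free data: set
\[
T(\ba)=\sum_{i=1}^r a_i^2\,B^{\,i}\quad\text{with } B \text{ large?}
\]
This also fails to be injective uniformly.

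So I would take the honest approach. Define the r.e.\ set
\[
\sA=\{N\in\Z_{\geq0}\colon N=\pi(\iota(\ba)) \text{ for some } \ba\in\sD\},
\]
and express ``$N=\pi(\iota(\ba))$'' for integers by polynomial equations. For $\iota$, encode $\iota(a_i)=c_i$ through $(c_i-2a_i)(c_i+2a_i+1)=0$ together with $c_i\geq0$. The nonnegativity $c_i\geq0$ is then eliminated by substituting $c_i$ as a sum of four squares. This costs extra unknowns, which is unacceptable inside the $10$.

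The fix is to place all of these auxiliary integer unknowns into the Theorem~\ref{thm:plus3} tuple. The obstacle is that they would count as $\bY$ unknowns. The right observation is therefore that $\bY$ must contain only the $10$ witnesses, so the encoding has to be absorbed into $Q$ itself.

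\emph{Step 2: resolution.} Apply Theorem~\ref{thm:source} not to $\sD$ but to the r.e.\ set
\[
\sA=\{\iota(\pi_r^{-1}(\cdot))\text{-codes of }\sD\},
\]
and observe that the decoding maps $N\mapsto a_i$ need not be polynomial. What we need is a polynomial in $(\ba,\bz)$. The standard trick is to apply Matiyasevich-type arguments to the r.e.\ set $\sA\subseteq\Z_{\geq0}$ obtained from $\sD$ by a polynomial injection $\Z^r\to\Z_{\geq0}$ restricted to a polynomially definable image. Concretely, take
\[
\theta(\ba)=\pi\bigl(a_1^2,\ldots\bigr).
\]
Squares lose signs, so I would use instead
\[
\theta(\ba)=\pi_{2r}\bigl((a_1)^2,(a_1+1)^2,\ldots\bigr).
\]
Here $a^2$ and $(a+1)^2$ together determine $a$, since their difference is $2a+1$, and $\pi_{2r}$ (the iterated Cantor pairing polynomial) is injective on $\Z_{\geq0}^{2r}$. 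Hence $\theta\colon\Z^r\to\Z_{\geq0}$ is an injective polynomial map with integer coefficients. Set $\sA=\theta(\sD)$, which is r.e. Then for $\ba\in\Z^r$ we have $\ba\in\sD\iff\theta(\ba)\in\sA$, which holds if and only if there is $\bz\in\Z^{10}$ with $Q_\sA(\theta(\ba),\bz)=0$ and $z_{10}\neq0$.

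\emph{Step 3: transfer.} Apply Theorem~\ref{thm:plus3} with $\bX=(X_1,\ldots,X_r)$, $\bY=(Y_1,\ldots,Y_{10})$, $s=t=1$, $P_1=Q_\sA(\theta(\bX),\bY)$ and $R_1=Y_{10}$. The resulting $H(\bX,\bY,U,V,J)$ gives, for $\ba\in\sO_K^r$, the following: $\exists$ $13$ unknowns in $\sO_K$ solving $H=0$ if and only if $\ba\in\Z^r$ and the source condition holds, that is, if and only if $\ba\in\sD$. Renaming $(\bY,U,V,J)$ as $Y_1,\ldots,Y_{13}$ gives $H_\sD$.

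The only delicate point is the injective polynomial packing $\theta$, handled as above. Once that is in place, the rest is a direct invocation of earlier results.
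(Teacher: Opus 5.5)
Your final argument (Steps~2--3) is essentially the paper's proof: an injective integer-valued polynomial packing $\Z^r\to\Z_{\geq0}$, then Theorem~\ref{thm:source} applied to the image $\sA$, then Theorem~\ref{thm:plus3} with $(m,s,t)=(10,1,1)$ and $R_1=Y_{10}$; the only difference is that the paper packs with $G_0(T)=2T^2-T$ and $C(U,V)=(U+V)^2+U$, where you use $a\mapsto(a^2,(a+1)^2)$ followed by Cantor pairing. One slip: the Cantor pairing $\frac{(x+y)(x+y+1)}{2}+y$ does not have integer coefficients, so your $P_1=Q_\sA(\theta(\bX),\bY)$ need not lie in $\Z[\bX,\bY]$ as Theorem~\ref{thm:plus3} requires; you can fix this by multiplying $P_1$ by a large power of $2$, or by using an integer-coefficient pairing such as $(x+y)^2+x$.
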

\begin{proof}
We first encode $\Z^r$ in $\Z_{\geq0}$ using polynomials. Define
\[
G_0(T)=2T^2-T\quad\text{and}\quad C(U,V)=(U+V)^2+U.
\]
It is easy to verify $G_0(\Z)\subseteq\Z_{\geq0}$ and $C(\Z_{\geq0}^2)\subseteq\Z_{\geq0}$. Since
\[
G_0(a)-G_0(b)=(a-b)(2a+2b-1),
\]
the map $G_0:\Z\to\Z_{\geq0}$ is injective. The map $C:\Z_{\geq0}^2\to\Z_{\geq0}$ is also injective: if $s=u+v$ with $u,v\in\Z_{\geq0}$, then
$s^2\leq C(u,v)<(s+1)^2$, so $C(u,v)$ determines $s$, then $u$, and then $v$.
Define $J_1=G_0$ and recursively
\[
J_{r+1}(X_1,\ldots,X_{r+1})
=C\left(J_r(X_1,\ldots,X_r),G_0(X_{r+1})\right).
\]
Then $J_r:\Z^r\to\Z_{\geq0}$ is an injective polynomial map.

Set $\sA=J_r(\sD)$. The set $\sA$ is r.e.\ because $\sD$ is r.e. Then Theorem~\ref{thm:source} gives a polynomial
\[
Q_{\sA}(T,Z_1,\ldots,Z_{10})\in\Z[T,Z_1,\ldots,Z_{10}]\]
such that for every $a\in\Z_{\geq0}$,
\[
a\in\sA\quad\Longleftrightarrow\quad
\exists z_1,\ldots,z_9\in\Z\ \exists z_{10}\in\Z\setminus\{0\}:
Q_{\sA}(a,z_1,\ldots,z_{10})=0.
\]
Hence, for $\ba\in\Z^r$,
\[
\ba\in\sD\quad\Longleftrightarrow\quad
\exists\bz\in\Z^{10}:
Q_{\sA}(J_r(\ba),\bz)=0,\quad z_{10}\neq0.
\]
Apply Theorem~\ref{thm:plus3} with $(m,s,t)=(10,1,1)$ and
\[
P_1(\bX,\bZ)=Q_{\sA}(J_r(\bX),\bZ),\quad R_1(\bX,\bZ)=Z_{10}.
\]
The resulting representation over $\sO_K$ has $10+3=13$ unknowns.
\end{proof}

\begin{proof}[Proof of Theorem~\ref{thm:bd13}]
Fix a nonrecursive r.e.\ set $\sA\subseteq\Z_{\geq0}$. Applying Theorem~\ref{thm:re13} with $r=1$ gives, for every quadratic number field $K$, a polynomial
\[
H_{K,\sA}(T,Y_1,\ldots,Y_{13})\in\Z[T,Y_1,\ldots,Y_{13}]
\]
such that, for every $t\in\sO_K$,
\[
t\in \sA\quad\Longleftrightarrow\quad
\exists\by\in\sO_K^{13}: H_{K,\sA}(t,\by)=0.
\]

Since $\sA$ is fixed, the polynomial $Q_{\sA}$ given by Theorem~\ref{thm:source} has fixed degree. In the proof of Theorem~\ref{thm:re13}, we have $r=1$, so the encoding polynomial is $J_1=G_0$, which has degree $2$. We also apply Theorem~\ref{thm:plus3} with $(m,s,t)=(10,1,1)$ and $R_1=Z_{10}$, which has degree $1$. Thus, the numbers of variables and the exponents appearing in the transfer construction are fixed. It remains to check that the quadratic field $K$ affects only the coefficients, not the degrees. In the imaginary quadratic case, the squarefree integer $d$ appears only in coefficients such as $d+1$. Thus, it does not affect the total degree. In the real quadratic case, $d$ and the chosen solution $(a_0,b_0)$ of $a_0^2-db_0^2=1$ enter only through the coefficients $a$ and $e$. Again, they do not affect the exponents in the defining polynomials. Therefore, the degrees arising from the construction depend only on $\deg(Q_{\sA})$ and on whether $K$ is real or imaginary. They do not depend on the particular quadratic field $K$. Taking the maximum of the two resulting degree bounds gives an integer $D_0\geq1$, independent of $K$, such that $H_{K,\sA}(t,Y_1,\ldots,Y_{13})$ can always be chosen to have total degree at most $D_0$.

If the decision algorithm excluded by Theorem~\ref{thm:bd13} exists for a fixed quadratic $K$, then on input $t\in\Z$, one could apply it to the polynomial $H_{K,\sA}(t,Y_1,\ldots,Y_{13})$ and decide membership in $\sA$, contrary to the assumption that $\sA$ is nonrecursive.
\end{proof}

\subsection{Universal pairs}
Theorem~\ref{thm:bd13} gives a uniform degree bound, but not an explicit one. We now obtain an explicit bound at the cost of one additional quantified unknown.

Let $R$ be an integral domain of characteristic zero, and view $\Z\subseteq R$. A pair $(\nu,\Delta)\in\Z_{>0}^2$ is called \emph{universal} over $R$ if, for every r.e.\ set $\sA\subseteq\Z_{\geq0}$, there exists a polynomial
\[
P_{\sA}(T,Y_1,\ldots,Y_\nu)\in\Z[T,Y_1,\ldots,Y_\nu]
\]
of total degree at most $\Delta$ such that, for every $t\in\Z_{\geq0}$,
\[
t\in\sA\quad\Longleftrightarrow\quad\exists\bb\in R^\nu\colon\ P_{\sA}(t,\bb)=0.
\]
Thus, the parameter $t$ is already restricted to $\Z_{\geq0}$ in the definition; only the $\nu$ quantified unknowns range over $R$.

Bayer--David--Hassler--Matiyasevich--Schleicher \cite{BayerEtAl25}, together with the formal verification of Bayer--David \cite{BayerDavid25}, proved that $(11,\Delta_{\Z})$ is universal over $\Z$, where
\[
\begin{aligned}
	\Delta_{\Z}&=1681043235226619916301182624511918527834137733707408448335539840\\
	&\approx1.68105\cdot10^{63}.
\end{aligned}
\]
Bayer et al. \cite{BayerEtAl25} also state a universal pair $(11,\Delta_{\Z}')$ over $\Z$ with $\Delta_{\Z}'\approx9.50818\cdot10^{53}\ll\Delta_{\Z}$. However, the correctness of this smaller pair depends on a statement of Jones, which is not proved. See \cite[\S~0.3]{BayerEtAl25} for details.

\begin{theorem}[Universal pair for quadratic integer rings]\label{thm:universalpairOK}
For every quadratic number field $K$, the pair $(14,2\Delta_{\Z})$ is universal over $\sO_K$. Explicitly,
\[
\begin{aligned}
	2\Delta_{\Z}&=3362086470453239832602365249023837055668275467414816896671079680\\
	&\approx3.36209\cdot10^{63}.
\end{aligned}
\]
\end{theorem}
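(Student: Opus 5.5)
We prove the claim by combining the $11$-unknown universal pair $(11,\Delta_{\Z})$ over $\Z$ with the integrality polynomial of Theorem~\ref{thm:uniform3}. We apply that theorem with $n=11$ and $R=1$, so no nonvanishing condition is needed; it costs $3$ unknowns, giving $11+3=14$. We do not route through Theorem~\ref{thm:plus3} with the full polynomial $P_{\sA}$ inside. That would feed $P_{\sA}$ into the Pell and relation-combining machinery and multiply its degree by a large factor. Instead we keep $P_{\sA}$ outside and only square it, which is where the factor $2$ in $2\Delta_{\Z}$ comes from.

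\textbf{Construction.} Fix an r.e.\ set $\sA\subseteq\Z_{\geq0}$ and let $P_{\sA}(T,Y_1,\ldots,Y_{11})\in\Z[T,\bY]$ be the polynomial of total degree at most $\Delta_{\Z}$ witnessing universality over $\Z$. Let $I_K(\bY,U,V,J)=\Phi_{K,11,1}$ be the polynomial from Theorem~\ref{thm:uniform3}. For all $\bb\in\sO_K^{11}$ it satisfies
\[
\exists u,v,j\in\sO_K\colon I_K(\bb,u,v,j)=0\quad\Longleftrightarrow\quad\bb\in\Z^{11}.
\]
Define
\[
H_{\sA}(T,\bY,U,V,J)=P_{\sA}(T,\bY)^2+\kappa_K\, I_K(\bY,U,V,J)^2,
\]
where $\kappa_K=1$ if $K$ is real and $\kappa_K=d+1$ if $K=\Q(\sqrt{-d})$.

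\textbf{Correctness.} Fix $t\in\Z_{\geq0}$ and $(\bb,u,v,j)\in\sO_K^{14}$; then $P_{\sA}(t,\bb)$ and $I_K(\bb,u,v,j)$ lie in $\sO_K$.
\begin{enumerate}[label=\textup{(\roman*)}]
\item If $K$ is real, then $H_{\sA}=0$ forces both terms to vanish, by positivity under a real embedding.
\item If $K$ is imaginary, the same conclusion follows from Lemma~\ref{lem:anisotropic}.
\end{enumerate}
Hence $H_{\sA}(t,\bb,u,v,j)=0$ holds for some such tuple if and only if $\bb\in\Z^{11}$ and $P_{\sA}(t,\bb)=0$. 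By the choice of $P_{\sA}$, this happens if and only if $t\in\sA$. So $H_{\sA}$ represents $\sA$ over $\sO_K$ with $14$ quantified unknowns.

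\textbf{Degree.} We have $\deg H_{\sA}\leq\max\bigl(2\Delta_{\Z},\,2\deg I_K\bigr)$. It therefore suffices to show $\deg I_K\leq\Delta_{\Z}$, and this is the step that needs actual checking. As noted in the proof of Theorem~\ref{thm:bd13}, $\deg I_K$ depends only on $n=11$ and on whether $K$ is real or imaginary, not on $K$ itself.

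We bound it by tracking degrees through the explicit construction in the proof of Theorem~\ref{thm:uniform3}, with $n=11$ and $R=1$.
\begin{enumerate}[label=\textup{(\roman*)}]
\item \emph{Imaginary case.} $\deg L_0=11$, $\deg L_1=121$, $\deg L_2=132$. Hence $\deg H\le 506$, $\deg Y\le 507$, and $\deg C$ is a few thousand. The outer $F$ is of degree $4$ in quantities of these sizes, so its degree is roughly a few times $10^4$.
\item \emph{Real case.} $\deg M=22$ and $\deg W\le 22\cdot12$. The subsequent degrees of $T$, $Y$, $Q$, $A_2$ and $F$ again stay below about $10^5$.
\end{enumerate}
Both bounds are astronomically smaller than $\Delta_{\Z}\approx1.68\cdot10^{63}$, so $\deg H_{\sA}\le 2\Delta_{\Z}$. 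Since universality only requires total degree at most $2\Delta_{\Z}$, no padding is needed, and $(14,2\Delta_{\Z})$ is universal over $\sO_K$ for every quadratic $K$.
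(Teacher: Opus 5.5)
Your proposal is correct and follows the paper's proof essentially verbatim: you apply Theorem~\ref{thm:uniform3} with $n=11$ and $R=1$ to the quantified unknowns only, leave $T$ outside, and combine with $P_{\sA}$ via $P_{\sA}^2+I^2$ or $P_{\sA}^2+(d+1)I^2$. The only difference is the degree count, which the paper does exactly ($\deg I\le 6092$ in the imaginary case and $\deg I\le 4244$ in the real case), whereas yours is loose. In particular $F$ has total degree $8$, not $4$, but a crude bound such as $8\cdot 3044$ still lies far below $\Delta_{\Z}$.
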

\begin{proof}
Let $\sA\subseteq\Z_{\geq0}$ be r.e., and let
$P_{\sA}(T,Y_1,\ldots,Y_{11})$ be an integer-representation polynomial of degree at most $\Delta_{\Z}$. Apply Theorem~\ref{thm:uniform3} with $n=11$ and $R=1$ to the $11$ quantified unknowns $Y_1,\ldots,Y_{11}$; the parameter $T$ is not included because it is already restricted to $\Z_{\geq0}$. Let
\[
I(Y_1,\ldots,Y_{11},U,V,J)=\Phi_{K,11,1}(Y_1,\ldots,Y_{11},U,V,J)
\]
be the resulting polynomial. Then for every $\ba\in\sO_K^{11}$,
\[
\exists u,v,j\in\sO_K\colon\ I(\ba,u,v,j)=0\quad\Longleftrightarrow\quad\ba\in\Z^{11}.
\]
Define
\[
H_{\sA}=
\begin{cases}
P_{\sA}^2+I^2, & K\text{ real},\\[2mm]
P_{\sA}^2+(d+1)I^2, & K=\Q(\sqrt{-d})\text{ with }d\text{ positive and squarefree}.
\end{cases}
\]
For $t\in\Z_{\geq0}$, the equation $H_{\sA}(t,\cdot)=0$ over $\sO_K$ is equivalent to $P_{\sA}(t,\cdot)=0$ having a solution in $\Z^{11}$. 

It remains to verify that $\deg(H_{\sA})\leq 2\Delta_\Z$. In the imaginary construction in the proof of Theorem~\ref{thm:uniform3}, with $n=11$ and $R=1$, the degrees of $L_0,L_1,L_2,H,S,Y,B,C$ are respectively
\[
11,\ 121,\ 132,\ 506,\ 1,\ 507,\ 1014,\ 3044,
\]
and direct substitution into $F$ gives $\deg I\leq6092$. In the real construction, the degrees of $M,W,T,Y,Q,A_1,A_2$ are respectively
\[
22,\ 264,\ 264,\ 265,\ 531,\ 530,\ 1592,
\]
and $\deg I\leq4244$. Therefore
\[
\deg H_{\sA}\leq\max\{2\Delta_{\Z},2\deg I\}=2\Delta_{\Z},
\]
which proves the conclusion.
\end{proof}

The universal pair has the following undecidable consequence:
\begin{corollary}\label{cor:explicit-bounded}
For every quadratic number field $K$, there is no algorithm that, given a polynomial $R(\bX)\in\Z[X_1,\ldots,X_{14}]$ of total degree at most $2\Delta_{\Z}$, decides whether $R=0$ has a solution in $\sO_K^{14}$.
\end{corollary}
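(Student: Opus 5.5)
The plan is to reduce Corollary~\ref{cor:explicit-bounded} to Theorem~\ref{thm:universalpairOK}, following the same pattern as the proof of Theorem~\ref{thm:bd13}.

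Fix a quadratic number field $K$ and a nonrecursive r.e.\ set $\sA\subseteq\Z_{\geq0}$. Such sets exist by \cite[\S~7.4]{Cutland}. By Theorem~\ref{thm:universalpairOK}, the pair $(14,2\Delta_{\Z})$ is universal over $\sO_K$. Hence there is a polynomial $H_{\sA}(T,Y_1,\ldots,Y_{14})\in\Z[T,\bY]$ of total degree at most $2\Delta_{\Z}$ such that, for every $t\in\Z_{\geq0}$,
\[
t\in\sA\quad\Longleftrightarrow\quad\exists\bb\in\sO_K^{14}\colon\ H_{\sA}(t,\bb)=0.
\]
The polynomial depends on $K$, but $K$ is fixed throughout, so this causes no difficulty.

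Now suppose, for contradiction, that an algorithm as in the statement exists for $K$. Given $t\in\Z_{\geq0}$, form the specialization
\[
R_t(X_1,\ldots,X_{14}):=H_{\sA}(t,X_1,\ldots,X_{14})\in\Z[X_1,\ldots,X_{14}].
\]
This can be computed effectively from $t$ by substituting the integer $t$ into a fixed polynomial. Substituting a constant for $T$ does not increase the total degree, so $\deg R_t\leq\deg H_{\sA}\leq2\Delta_{\Z}$, and $R_t$ is therefore a legal input. Running the algorithm on $R_t$ decides whether $R_t=0$ has a solution in $\sO_K^{14}$, which by the displayed equivalence is exactly whether $t\in\sA$. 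This would make $\sA$ recursive, a contradiction.

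No step presents a real obstacle. The only points to check are that the universality definition restricts the parameter to $t\in\Z_{\geq0}$, which is exactly the set of inputs we use, and that specializing $T$ preserves the degree bound. Both follow directly from the definitions.
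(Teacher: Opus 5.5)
Your proof is correct and matches the paper's argument: fix a nonrecursive r.e.\ set $\sA$, take the degree-$\leq 2\Delta_{\Z}$ polynomial $H_{\sA}$ given by Theorem~\ref{thm:universalpairOK} over $\sO_K$, and use the specializations $H_{\sA}(t,\cdot)$ to decide membership in $\sA$, which is a contradiction. Your remarks that substituting $T=t$ does not raise the total degree and that $H_{\sA}$ can simply be hard-coded are exactly the details the paper leaves implicit.
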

\begin{proof}
Choose a nonrecursive r.e.\ set $\sA\subseteq\Z_{\geq0}$. By Theorem~\ref{thm:universalpairOK}, there exists a polynomial
\[
H_\sA(T,Y_1,\ldots,Y_{14})\in\Z[T,Y_1,\ldots,Y_{14}]
\]
of total degree at most $2\Delta_{\Z}$ such that, for every $t\in\Z_{\geq0}$,
\[
t\in\sA\quad\Longleftrightarrow\quad
\exists\by\in\sO_K^{14}:H_\sA(t,\by)=0.
\]
If the asserted decision algorithm exists, then for an input $t\in\Z_{\geq0}$, it would decide membership in $\sA$, contradicting the nonrecursiveness of $\sA$.
\end{proof}
Thus, Theorem~\ref{thm:bd13} gives $13$ unknowns with a non-explicit uniform degree bound $D_0$, while Corollary~\ref{cor:explicit-bounded} gives the explicit degree bound $2\Delta_{\Z}$ with $14$ unknowns.

\section{Lucas applications}\label{sec:pure-apps}
The fourth-order congruence used in the $3$-unknown construction is the first nontrivial truncation of a classical Lucas multiplication formula. We present that polynomial explicitly for the norm-one family, then use its first nonzero terms to study the exact $\ell$-adic valuation of the deviation $u_{nk}/u_{n}-k$. Standard valuations of Lucas terms are classical; see, for example, \cite{Sanna16,BW23}. The final results concern instead the local distribution of these deviations, which is related to, but distinct from, the $p$-adic quotient-set problem studied in \cite{AntonyBarman23,AntonyBarman25}.

\subsection{Lucas expansions and deviations}\label{sec:Lucas}
Fix $a\in\Z$. Define $(u_n)_{n\geq0},(v_n)_{n\geq0}$ by
\[
u_0=0,\quad u_1=1,\quad u_{n+2}=au_{n+1}-u_n,
\qquad
v_0=2,\quad v_1=a,\quad v_{n+2}=av_{n+1}-v_n.
\]
Write $Z^2-aZ+1=(Z-\alpha)(Z-\beta)$, where $\alpha,\beta\in\overline{\Q}$. Then for $n\geq0$, we have
\[
u_n=\sum_{i=0}^{n-1}\alpha^{n-1-i}\beta^i,\quad v_n=\alpha^n+\beta^n,
\]
and
$u_n=(\alpha^n-\beta^n)/(\alpha-\beta)$ if $\delta:=a^2-4=(\alpha-\beta)^2\neq0$. Note that $v_n^2-\delta u_n^2=4$ for $n\geq0$.

The next proposition is a reformulation of the classical Lucas multiplication formula; see \cite[\S~2.2]{BW23}.
\begin{proposition}[All-orders expansion]\label{prop:lucas-all}
Let $k\geq1$ be an odd integer, and set $\theta=(k-1)/2\in\Z_{\geq0}$. For an integer $0\leq r\leq\theta$, define
\[
c_r(k)=\frac{k\prod_{j=1}^{r}\left(k^2-(2j-1)^2\right)}{4^r(2r+1)!},
\]
where $c_0(k)=k$; then $c_r(k)\in\Z_{>0}$. For every $n\in\Z_{>0}$ with $u_n\neq0$, one has
\[
\frac{u_{nk}}{u_n}=\sum_{r=0}^{\theta}c_r(k)\delta^r u_n^{2r};
\]
in particular, for every $h\in\Z_{>0}$, one has
\[
\frac{u_{nk}}{u_n}\equiv\sum_{r=0}^{\min\{h-1,\theta\}}c_r(k)\delta^r u_n^{2r}\pmod{u_n^{2h}}.
\]
\end{proposition}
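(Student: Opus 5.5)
Our approach is to express $u_{nk}/u_n$ as a polynomial in $v_n$ through the Chebyshev-type identity, and then rewrite that polynomial in terms of $\delta u_n^2=v_n^2-4$. Set $A=\alpha^n$ and $B=\beta^n$, so that $AB=1$, $A+B=v_n$, and $(A-B)^2=v_n^2-4=\delta u_n^2$. For the generic case $\delta\neq0$ we have $u_{nk}/u_n=(A^k-B^k)/(A-B)$. Since $k$ is odd, the standard binomial expansion of $A^k-B^k$ in terms of $A+B$ and $A-B$ applies. Write $A=(S+D)/2$ and $B=(S-D)/2$ with $S=v_n$ and $D=A-B$. Then
\[
\frac{A^k-B^k}{A-B}=\frac{1}{2^{k-1}}\sum_{j\ \text{even}}\binom{k}{j+1}S^{k-1-j}D^{j}.
\]
Next we use $S^2=D^2+4$ to eliminate $S$. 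Since $k-1-j$ is even, $S^{k-1-j}=(D^2+4)^{(k-1-j)/2}$. The quotient is therefore a polynomial in $D^2=\delta u_n^2$ with rational coefficients that depend only on $k$. It remains to identify the coefficient of $D^{2r}$ with $c_r(k)$.

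For that identification, the cleanest route is a one-variable identity. Put $D=2\sinh\phi$ and $S=2\cosh\phi$, so that $A=e^{\phi}$. The left side becomes $\sinh(k\phi)/\sinh\phi$, which we write as a polynomial in $x=\sinh^2\phi=D^2/4$. The classical expansion for odd $k$ is
\[
\frac{\sinh k\phi}{\sinh\phi}=\sum_{r=0}^{\theta}\frac{k\prod_{j=1}^r(k^2-(2j-1)^2)}{(2r+1)!}\,\sinh^{2r}\phi .
\]
This is equivalent to the known series $\sin(k\phi)/\sin\phi$ expressed in powers of $\sin^2\phi$ with hypergeometric coefficients. Substituting $\sinh^{2r}\phi=D^{2r}/4^r$ gives exactly $c_r(k)(\delta u_n^2)^r$. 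To prove the displayed expansion, I would use $f(\phi)=\sinh(k\phi)/\sinh\phi$ viewed as a function $g(x)$ with $x=\sinh^2\phi$, and derive a second-order ODE for $g$ from $f''$ together with the ODE satisfied by $\sinh(k\phi)$. This ODE yields the coefficient recursion
\[
c_{r+1}=c_r\,\frac{k^2-(2r+1)^2}{4(2r+2)(2r+3)},
\]
and matching $c_0=k$ finishes the identification. As a cross-check, the case $r=1$ should recover $\gamma_e$ in Proposition~\ref{prop:pell-star}: there $\delta=4e$ with the $a$ replaced by $2a$, and $c_1(k)\cdot 4e=ek(k^2-1)/6$, which agrees. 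Since both sides are polynomial identities in $(\alpha,\beta)$ with $\alpha\beta=1$, the degenerate case $\delta=0$ ($a=\pm2$) follows by continuity, or by reading the sum formula for $u_n$ as a polynomial identity. This avoids dividing by $\alpha-\beta$.

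Integrality of $c_r(k)$ holds because $k\prod_{j=1}^r(k^2-(2j-1)^2)=\prod_{i=-r}^{r}(k+2i-\cdots)$. More precisely, this product equals $\prod_{i=0}^{2r}(k-2r+2i)$, a product of $2r+1$ consecutive odd-step terms. Writing $k=2m+1$, the product becomes $2^{2r+1}\cdot\prod_{i=0}^{2r}(m-r+i+\tfrac12)$. A cleaner way to see integrality is to note that $c_r(k)=\binom{m+r}{2r}\frac{k}{2r+1}\cdot$(a combinatorial factor), and I would check this form directly. Simplest of all is to observe that the coefficients of the polynomial in $x$ lie in $\Z$, because $\sinh(k\phi)/\sinh\phi=U_{k-1}$ (Chebyshev of the second kind, up to sign conventions) has integer coefficients in $S$. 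Substituting $S^2=4x+4$ keeps the coefficients integral. Positivity is immediate since $k^2>(2j-1)^2$ for $j\le\theta$. The congruence modulo $u_n^{2h}$ then follows by dropping the terms with $r\ge h$, each of which is divisible by $u_n^{2r}$. The main obstacle is the coefficient identification, that is, verifying the recursion. I would do this by the ODE method: $(1+x)\,4x\,g''+(6x+2)\cdot g'\cdots$. The exact ODE I would compute from $\frac{d}{d\phi}=2\sinh\phi\cosh\phi\frac{d}{dx}$.
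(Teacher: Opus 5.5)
Your strategy is the paper's. You write $u_{nk}/u_n$ as an even polynomial in $v_n$: you derive it from the binomial expansion, where the paper cites the Lucas multiplication polynomial $\Lambda_k$ from \cite{BW23}. You then change variable to $W=v_n^2-4=\delta u_n^2$ and identify coefficients through $\sinh(k\phi)/\sinh\phi=\sum_r 4^r c_r(k)\sinh^{2r}\phi$, which the paper quotes as standard and you propose to prove by a recursion. The overall structure, positivity, the $\delta=0$ case and the truncation modulo $u_n^{2h}$ are fine.

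There is, however, a normalization slip in your integrality argument. Substituting $S^2=4x+4$ with $x=\sinh^2\phi=D^2/4$ into an integer polynomial in $S$ gives integer coefficients in $x$. But the coefficient of $x^r$ is $4^r c_r(k)$, not $c_r(k)$; for $k=3$ you get $3+4x=3+D^2$. So as written you only prove $4^r c_r(k)\in\Z$, and integrality of $c_r(k)$ is exactly what the congruence modulo $u_n^{2h}$ needs.

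The fix is to substitute $S^2=D^2+4$ instead, i.e.\ work in $W=D^2=4x$. Then the integer coefficients of $W^r$ are exactly $c_r(k)$; this is the paper's observation that $\Lambda_k(Z)=\Phi_k(Z^2-4)$ with $\Phi_k\in\Z[W]$. Your abandoned closed form also works: with $k=2m+1$ one has $c_r(k)=\frac{k}{2r+1}\binom{m+r}{2r}=\binom{m+r}{2r}+2\binom{m+r}{2r+1}$.

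The same factor of $4$ affects your ODE sketch, and the displayed $g'$ coefficient is wrong. For $g(x)$ the correct equation is $4x(1+x)g''+(8x+6)g'-(k^2-1)g=0$. Its recursion $b_{r+1}=b_r\,\frac{k^2-(2r+1)^2}{(2r+2)(2r+3)}$, with $b_0=k$, governs the $x$-coefficients $b_r=4^r c_r(k)$. Your stated recursion with the extra factor $4$ is the correct one for $c_r(k)$ itself.
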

\begin{proof}
By the classical Lucas multiplication polynomial \cite[(2.35)]{BW23}, for every $n\ge1$ with $u_n\neq0$, one has
\[
\frac{u_{nk}}{u_n}=\Lambda_k(v_n),
\]
where
\[
\Lambda_k(Z)=\sum_{r=0}^{\theta}(-1)^{\theta-r}\binom{k+r-\theta-1}{\theta-r}Z^{2r}\in\Z[Z]
\]
is an even integer-coefficient polynomial depending only on $k$. Write
\[
\Lambda_k(Z)=\Phi_k(Z^2-4),
\]
where $\Phi_k(Z)\in\Z[Z]$.

Using the standard identities
\[
\Lambda_k(2\cosh t) = \frac{\sinh(kt)}{\sinh t}\quad\text{and}\quad\frac{\sinh(kt)}{\sinh t}=\sum_{r=0}^{\theta}4^rc_r(k)\sinh^{2r}t
\]
and substituting $W=Z^2-4$ alongside $(Z,W)=(2\cosh t,4\sinh^2t)$ yield
\[
\Phi_k(W)=\sum_{r=0}^{\theta}c_r(k)W^r.
\]
So $c_r(k)\in\Z_{>0}=\Z\cap\Q_{>0}$ for $0\leq r\leq\theta$. Since $v_n^2-4=\delta u_n^2$, substitution yields the exact formula, and truncation shows the congruence.
\end{proof}

The fourth-order formula is the first nonconstant truncation of this multiplication polynomial.
\begin{corollary}[Fourth-order expansion]\label{cor:lucas4}
Let $n\in\Z_{>0}$ with $u_n\neq0$, and let $k\in\Z_{>0}$ be odd. Then $u_n\mid u_{nk}$ and
\[
\frac{u_{nk}}{u_n}\equiv k+\frac{\delta k(k^2-1)}{24}u_n^2\pmod{u_n^4}.
\]
Moreover, there are $\Psi_k, \Psi_k^*\in\Z[W]$ such that
\[
v_{nk}=v_n\Psi_k(\delta u_n^2)\quad\text{and}\quad\Psi_k(W)=1+\frac{k^2-1}{8}W+W^2\Psi_k^*(W).
\]
\end{corollary}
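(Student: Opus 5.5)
Both parts come from specializing Proposition~\ref{prop:lucas-all} and from the structure of the even Lucas multiplication polynomial. For the first claim, take $h=2$ in Proposition~\ref{prop:lucas-all}. Since $k\geq1$ is odd, the exact formula
\[
\frac{u_{nk}}{u_n}=\sum_{r=0}^{\theta}c_r(k)\delta^ru_n^{2r}
\]
has integer coefficients $c_r(k)$, so the right-hand side lies in $\Z$; this gives $u_n\mid u_{nk}$. If $k=1$ the congruence is trivial, since $k(k^2-1)=0$. If $k\geq3$, then $\theta\geq1$, and truncating modulo $u_n^4$ leaves
\[
c_0(k)+c_1(k)\delta u_n^2 .
\]
By definition $c_0(k)=k$, and
\[
c_1(k)=\frac{k(k^2-1)}{4\cdot 3!}=\frac{k(k^2-1)}{24},
\]
which is exactly the stated linear correction. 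As a sanity check, for $a=2a'$ we have $\delta=4(a'^2-1)=4e$ and $u_j=q_j$, which recovers $\gamma_e(k)=e\,k(k^2-1)/6$ from Proposition~\ref{prop:pell-star}.

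For the second claim I would not go through a binomial expansion in $\alpha^n,\beta^n$. Instead I would use the classical identity $v_{nk}=T_k^*(v_n)$, where $T_k^*(Z)=2T_k(Z/2)$ is the normalized Chebyshev (Dickson) polynomial. This identity holds because $\alpha\beta=1$, so $v_{nk}=\alpha^{nk}+\alpha^{-nk}$ is a polynomial in $v_n=\alpha^n+\alpha^{-n}$. The polynomial $T_k^*$ is monic of degree $k$ with integer coefficients, and it is odd because $k$ is odd. So $T_k^*(Z)=Z\,\Omega_k(Z^2)$ with $\Omega_k\in\Z[Z]$. Setting $\Psi_k(W):=\Omega_k(W+4)\in\Z[W]$ and using $v_n^2=\delta u_n^2+4$ gives $v_{nk}=v_n\Psi_k(\delta u_n^2)$ immediately. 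This step needs no hypothesis $u_n\neq0$.

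It remains to find the first two Taylor coefficients of $\Psi_k$ at $W=0$, and this is the main obstacle: the coefficients are rational-looking, so integrality of $\Psi_k^*$ has to be argued rather than assumed. I would parametrize by $Z=2\cosh t$ and $W=4\sinh^2t$. Then
\[
\Psi_k(W)=\frac{\cosh(kt)}{\cosh t}.
\]
Expanding in powers of $\sinh^2t$ should give $1+\tfrac{k^2-1}{2}\sinh^2t+O(\sinh^4t)$, which is $1+\tfrac{k^2-1}{8}W+O(W^2)$. This is checked by comparing $t^2$ coefficients: the left side gives $\tfrac{k^2-1}{2}t^2$, and on the right $W\sim 4t^2$. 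Since $k$ is odd, $8\mid k^2-1$, so $\tfrac{k^2-1}{8}\in\Z$. Since $\Psi_k(0)=1$, the polynomial
\[
\Psi_k(W)-1-\frac{k^2-1}{8}W
\]
lies in $\Z[W]$ and vanishes to order $2$ at $0$. It is therefore $W^2\Psi_k^*(W)$ with $\Psi_k^*\in\Z[W]$.

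Finally I would confirm $\Psi_k(0)=1$ directly: at $t=0$ we have $\Omega_k(4)=T_k^*(2)/2=1$. The same conclusion also follows from $T_k^*(2)=2$.
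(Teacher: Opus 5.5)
Your proposal is correct and follows the paper's route: the congruence is read off from Proposition~\ref{prop:lucas-all} via $c_1(k)=k(k^2-1)/24$. The statement for $v_{nk}$ then comes from writing the odd multiplication polynomial as $Z\,\Omega_k(Z^2)$ and substituting $(Z,W)=(2\cosh t,4\sinh^2 t)$, which is the argument the paper omits. The integrality of $\Psi_k^*$ is automatic once $\Psi_k\in\Z[W]$, so the fact that $8\mid k^2-1$ is only a consistency check, not a real obstacle.
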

\begin{proof}
The first two assertions follow from Proposition~\ref{prop:lucas-all} and
\[
c_1(k)=\frac{k(k^2-1)}{24}.
\]
The statement for $(v_{nk},v_n)$ can be proved as in the proof of Proposition~\ref{prop:lucas-all}. We omit the details.
\end{proof}

For a prime $\ell$, let $v_\ell$ denote the usual $\ell$-adic valuation on $\Q$, with $v_\ell(0)=\infty$. The next corollary measures the exact valuation of the error after subtracting the linear term $k$.
\begin{corollary}[Exact deviation valuation]\label{cor:deviation-vp}
	Let $n,k\in\Z_{>0}$ with $u_n\neq0$ and $k$ odd, and let $\ell\geq3$ be a prime. Suppose that $\ell\nmid\delta$ and $\ell\mid u_n$. Set $r=v_\ell(u_n)\geq1$. Define $h_\ell=1$ if $\ell=3$, and $h_\ell=0$ otherwise.
	\begin{enumerate}[label=\textup{(\arabic*)}]
		\item Set $\tau=v_\ell(k(k^2-1))$. If $\tau<2r+h_\ell$, then
		\[
		v_\ell\left(\frac{u_{nk}}{u_n}-k\right)=2r+\tau-h_\ell\quad\text{and}\quad v_\ell(u_{nk}-ku_n)=3r+\tau-h_\ell.
		\]
		\item Set $\tau'=v_\ell(k^2-1)$. If $\tau'<2r$, then $v_\ell(v_{nk}-v_n)=2r+\tau'$.
	\end{enumerate}
\end{corollary}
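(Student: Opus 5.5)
Both parts follow from the exact expansions in Proposition~\ref{prop:lucas-all} and Corollary~\ref{cor:lucas4}: isolate the first nonconstant term and check that every later term has strictly larger $\ell$-adic valuation. Throughout, $\ell\nmid\delta$ gives $v_\ell(\delta)=0$, and $v_\ell(u_n^2)=2r$.

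\emph{Part (1).} Proposition~\ref{prop:lucas-all} gives
\[
\frac{u_{nk}}{u_n}-k=\sum_{j=1}^{\theta}c_j(k)\delta^j u_n^{2j}.
\]
If $k=1$ then $\tau=\infty$, so we may assume $k\geq3$ and $\theta\geq1$.

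The first term is $c_1(k)\delta u_n^2$ with $c_1(k)=k(k^2-1)/24$. Since $\ell\geq3$, we have $v_\ell(24)=h_\ell$. Hence this term has valuation $2r+\tau-h_\ell$.

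For $j\geq2$ we need a lower bound on $v_\ell(c_j(k))$. From the definition,
\[
c_j(k)=\frac{k\prod_{i=1}^{j}(k-2i+1)(k+2i-1)}{4^j(2j+1)!}.
\]
The numerator is divisible by $k(k^2-1)$, because the $i=1$ factor is $k^2-1$. So $v_\ell(c_j(k))\geq\tau-v_\ell((2j+1)!)$. Therefore the $j$-th term has valuation at least
\[
2jr+\tau-v_\ell((2j+1)!).
\]
It remains to show this exceeds $2r+\tau-h_\ell$. Equivalently, we need
\[
v_\ell((2j+1)!)<2(j-1)r+h_\ell \quad\text{for } j\geq2,\ r\geq1.
\]
Legendre's formula gives $v_\ell((2j+1)!)<(2j+1)/(\ell-1)$.
\begin{itemize}
\item For $\ell\geq5$ the bound is at most $(2j+1)/4$, which is less than $2(j-1)$ for $j\geq2$.
\item For $\ell=3$ the bound is $(2j+1)/2$. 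We need it to be at most $2j-1$, i.e.\ strictly below $2(j-1)+1$; this holds when $j\geq2$ (for $j=2$, $v_3(5!)=1<3$).
\end{itemize}
Since the strict inequality holds, the minimum is attained uniquely by the $j=1$ term, which gives the first formula. The hypothesis $\tau<2r+h_\ell$ is not needed for this comparison; it only ensures $\tau$ is finite and fits the pattern. Multiplying by $u_n$ adds $r$, which gives the second formula.

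\emph{Part (2).} Corollary~\ref{cor:lucas4} gives $v_{nk}=v_n\Psi_k(\delta u_n^2)$. Hence
\[
v_{nk}-v_n=v_n\Bigl(\tfrac{k^2-1}{8}\delta u_n^2+\delta^2u_n^4\Psi_k^*(\delta u_n^2)\Bigr).
\]
Since $v_n^2=4+\delta u_n^2$ and $\ell\mid u_n$ with $\ell\neq2$, we get $\ell\nmid v_n$. The linear term then has valuation $2r+\tau'$. The remainder has valuation at least $4r>2r+\tau'$ by the hypothesis $\tau'<2r$, so the linear term dominates.

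\emph{Main obstacle.} The hardest step is the uniform factorial bound for $c_j(k)$ when $\ell=3$. There the margin is thin, and it relies on the numerator of $c_j$ carrying the full factor $k(k^2-1)$. I would check the case $j=2$ explicitly: $c_2(k)=k(k^2-1)(k^2-9)/1920$, and $v_3(1920)=1$. After that, Legendre's formula handles all larger $j$.
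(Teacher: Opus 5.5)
Your proof is correct. Part (2) coincides with the paper's argument, but part (1) takes a genuinely sharper route.

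The paper uses only the fourth-order truncation of Corollary~\ref{cor:lucas4}, writing $\frac{u_{nk}}{u_n}-k=\frac{\delta k(k^2-1)}{24}u_n^2+u_n^4c$ with an uncontrolled $c\in\Z$. The remainder is then only known to have valuation at least $4r$, and the hypothesis $\tau<2r+h_\ell$ is exactly what makes $4r>2r+\tau-h_\ell$.

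You instead use the full expansion of Proposition~\ref{prop:lucas-all} and observe that every $c_j(k)$ carries the factor $k(k^2-1)$. So the $j$-th term has valuation at least $2jr+\tau-v_\ell((2j+1)!)$, and Legendre's bound makes this strictly larger than $2r+\tau-h_\ell$ for all $j\geq2$ and $r\geq1$. In the tight case $\ell=3$ this is $v_3((2j+1)!)\leq j<2j-1$.

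The paper's route is shorter and uses only the congruence that drives the rest of the paper. Yours costs a factorial estimate but proves more: both formulas in (1) hold for every odd $k\geq3$, so the hypothesis $\tau<2r+h_\ell$ is redundant. For instance, with $k=\ell^m$ you get $v_\ell(u_{n\ell^m}/u_n-\ell^m)=2r+m-h_\ell$ for every $m\geq1$, whereas the paper's statement requires $m<2r+h_\ell$. Concretely, for $a=3$, $n=2$, $\ell=3$, $k=27$ the hypothesis fails, yet the valuation is indeed $4=2r+\tau-h_\ell$.

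Your method does not remove the hypothesis $\tau'<2r$ in (2), since there you, like the paper, leave $\Psi_k^*$ implicit.
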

\begin{proof}
	\textup{(1)}. By Corollary~\ref{cor:lucas4},
	\[
	\frac{u_{nk}}{u_n}-k=\frac{\delta k(k^2-1)}{24}u_n^2+u_n^4c
	\]
	for some $c\in\Z$. The two terms in the right-hand side have $\ell$-adic valuations $2r+\tau-h_\ell$ and at least $4r$, respectively. The hypothesis $\tau<2r+h_\ell$ shows $4r>2r+\tau-h_\ell$. Hence, the first equality holds, and the second equality follows.
	
	\textup{(2)}. Likewise,
	\[
	v_{nk}-v_n=v_n\frac{\delta(k^2-1)}8u_n^2+v_nu_n^4c'
	\]
	for some $c'\in\Z$. Since $v_n^2=4+\delta u_n^2\equiv4\pmod{\ell}$ and $\ell\geq3$, one has $\ell\nmid v_n$. The two terms in the right-hand side have valuations $2r+\tau'$ and at least $4r$, respectively. Then the conclusion follows.
\end{proof}

\subsection{Wieferich deviations}
Let $\ell\geq3$ be a rational prime with $\ell\nmid\delta$, and suppose that $\ell$ divides some nonzero term of $(u_n)$. Let $\rho_\ell\ge1$ be the smallest index such that $u_{\rho_\ell}\neq0$ and $\ell\mid u_{\rho_\ell}$. Define $r_\ell=v_\ell(u_{\rho_\ell})$. We call $\ell$ a \emph{Lucas--Wieferich prime} for the sequence $(u_n)_{n\geq1}$ if $r_\ell\geq2$. Set $h_\ell=1$ if $\ell=3$, and $h_\ell=0$ otherwise. The following corollary shows that $r_\ell$ is recovered from the deviation of the multiplication formula from its linear term:
\begin{corollary}[Lucas--Wieferich deviation]\label{cor:wieferich-deviation}
	With the preceding notation, we have
	\[
	v_\ell(u_{\ell\rho_\ell}-\ell u_{\rho_\ell})=3r_\ell+1-h_\ell\quad\text{and}\quad v_\ell(v_{\ell\rho_\ell}-v_{\rho_\ell})=2r_\ell.
	\]
	Consequently,
	\[
	\begin{aligned}
		\ell^2\mid u_{\rho_\ell}\quad&\Longleftrightarrow\quad\ell^{7-h_\ell}\mid u_{\ell\rho_\ell}-\ell u_{\rho_\ell}\quad\Longleftrightarrow\quad\ell^4\mid v_{\ell\rho_\ell}-v_{\rho_\ell}\\
		&\Longleftrightarrow\quad\ell^{5-h_\ell}\mid u_{\ell\rho_\ell}-\ell u_{\rho_\ell}\quad\Longleftrightarrow\quad\ell^3\mid v_{\ell\rho_\ell}-v_{\rho_\ell},
	\end{aligned}
	\]
	and
	\[
	r_\ell=
	\frac{v_\ell(u_{\ell\rho_\ell}-\ell u_{\rho_\ell})+h_\ell-1}{3}=\frac{v_\ell(v_{\ell\rho_\ell}-v_{\rho_\ell})}{2}.
	\]
\end{corollary}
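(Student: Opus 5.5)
The plan is to apply Corollary~\ref{cor:deviation-vp} with $n=\rho_\ell$, $k=\ell$ and $r=r_\ell$.

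First, the hypotheses. Here $u_{\rho_\ell}\neq0$, $\ell\mid u_{\rho_\ell}$, and $\ell\nmid\delta$ by assumption. The integer $k=\ell$ is odd since $\ell\geq3$. We have
\[
\tau=v_\ell\bigl(\ell(\ell^2-1)\bigr)=1,\qquad \tau'=v_\ell(\ell^2-1)=0,
\]
because $\ell\nmid\ell^2-1$. We need $\tau<2r+h_\ell$, i.e. $1<2r_\ell+h_\ell$, which holds since $r_\ell\geq1$. We also need $\tau'=0<2r_\ell$, which is immediate.

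Part~(1) then gives
\[
v_\ell(u_{\ell\rho_\ell}-\ell u_{\rho_\ell})=3r_\ell+1-h_\ell,
\]
and part~(2) gives
\[
v_\ell(v_{\ell\rho_\ell}-v_{\rho_\ell})=2r_\ell.
\]
Solving each of these for $r_\ell$ yields the two closed formulas in the last display.

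For the chain of equivalences, I would translate each divisibility into an inequality on $r_\ell$.
\begin{itemize}
\item $\ell^{7-h_\ell}\mid u_{\ell\rho_\ell}-\ell u_{\rho_\ell}$ means $3r_\ell+1-h_\ell\geq7-h_\ell$, i.e. $r_\ell\geq2$.
\item $\ell^{5-h_\ell}\mid u_{\ell\rho_\ell}-\ell u_{\rho_\ell}$ means $3r_\ell\geq4$, i.e. $r_\ell\geq2$, since $r_\ell$ is an integer.
\item $\ell^4\mid v_{\ell\rho_\ell}-v_{\rho_\ell}$ means $2r_\ell\geq4$, i.e. $r_\ell\geq2$.
\item $\ell^3\mid v_{\ell\rho_\ell}-v_{\rho_\ell}$ means $2r_\ell\geq3$, i.e. $r_\ell\geq2$.
\end{itemize}
Finally, $\ell^2\mid u_{\rho_\ell}$ is by definition $r_\ell\geq2$, so all five conditions are equivalent.

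There is no real obstacle here. The only points needing care are checking $\tau<2r_\ell+h_\ell$ when $r_\ell=1$, where it reads $1<2+h_\ell$, and using integrality of $r_\ell$ in the weaker threshold equivalences.
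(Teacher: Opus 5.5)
Your proposal is correct and is the paper's proof: apply Corollary~\ref{cor:deviation-vp} with $(n,k)=(\rho_\ell,\ell)$, note that $\tau=1$ and $\tau'=0$, check that $1<2r_\ell+h_\ell$ and $0<2r_\ell$, and read off the two valuations. Your translation of each divisibility into the condition $r_\ell\geq2$, using integrality of $r_\ell$ for the weaker thresholds $5-h_\ell$ and $3$, spells out what the paper compresses into ``yielding the conclusion.''
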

\begin{proof}
	Applying Corollary~\ref{cor:deviation-vp} with $(n,k)=(\rho_\ell,\ell)$ yields $\tau=1$ and $\tau'=0$. Since $1\leq r_\ell<2r_\ell+h_\ell$ and $0<2r_\ell$, the valuation conditions are satisfied, yielding the conclusion.
\end{proof}

\medskip

We now consider Fibonacci numbers. Let $(f_n)_{n\geq0}$ be the Fibonacci sequence, and $(\lambda_n)_{n\geq0}$ its Lucas companion:
\[
f_0=0,\quad f_1=1,\quad f_{n+2}=f_{n+1}+f_n,\quad\lambda_0=2,\quad\lambda_1=1,\quad\lambda_{n+2}=\lambda_{n+1}+\lambda_n.
\]
Using the notation above with $a=3$, one has
\[
u_n=f_{2n},\quad v_n=\lambda_{2n},\quad\delta=5.
\]

For a prime $\ell$, let $\pi(\ell)$ be the period of the sequence $(f_n\pmod{\ell})_{n\ge0}$ (it always exists), which is called the \emph{Pisano period modulo $\ell$}. A rational prime $\ell$ is called a \emph{Wall--Sun--Sun prime} if $\pi(\ell^2)=\pi(\ell)$; see \cite{Wall60,SS92,HarringtonJones23}. Let $\alpha_\ell$ be the \emph{Fibonacci rank of apparition of $\ell$}, i.e., the smallest index $\alpha_\ell\geq1$ such that $\ell\mid f_{\alpha_\ell}$. For an odd prime $\ell\geq3$, the classical properties of Fibonacci numbers imply
\[
\pi(\ell^2)=\pi(\ell)\quad\Longleftrightarrow\quad \ell^2\mid f_{\alpha_\ell};
\]
see, for example, \cite{Wall60} and \cite[Theorem~2]{Vinson63}.

The following corollary expresses the Wall--Sun--Sun condition through the exact linearization deviations:
\begin{corollary}[Fibonacci deviation]\label{cor:fib-deviation}
	Suppose that $\ell$ is an odd prime with $\ell\neq5$. Set
	\[
	\rho_\ell=\frac{\alpha_\ell}{\gcd(\alpha_\ell,2)}\quad\text{and}\quad r_\ell=v_\ell(f_{\alpha_\ell}).
	\]
	Then
	\[
	v_\ell\left(f_{2\ell\rho_\ell}-\ell f_{2\rho_\ell}\right)=3r_\ell+1-h_\ell\quad\text{and}\quad v_\ell\left(\lambda_{2\ell\rho_\ell}-\lambda_{2\rho_\ell}\right)=2r_\ell.
	\]
	Therefore,
	\[
	\begin{aligned}
		\ell\text{ is a Wall--Sun--Sun prime}\quad&\Longleftrightarrow\quad\ell^{7-h_\ell}\mid f_{2\ell\rho_\ell}-\ell f_{2\rho_\ell}\quad\Longleftrightarrow\quad \ell^4\mid\lambda_{2\ell\rho_\ell}-\lambda_{2\rho_\ell}\\
		&\Longleftrightarrow\quad\ell^{5-h_\ell}\mid f_{2\ell\rho_\ell}-\ell f_{2\rho_\ell}\quad\Longleftrightarrow\quad \ell^3\mid\lambda_{2\ell\rho_\ell}-\lambda_{2\rho_\ell},
	\end{aligned}
	\]
	and
	\[
	r_\ell=\frac{v_\ell(f_{2\ell\rho_\ell}-\ell f_{2\rho_\ell})+h_\ell-1}{3}=\frac{v_\ell(\lambda_{2\ell\rho_\ell}-\lambda_{2\rho_\ell})}{2}.
	\]
\end{corollary}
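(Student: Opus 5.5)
The plan is to reduce to Corollary~\ref{cor:wieferich-deviation} applied to the sequence with $a=3$, where $u_n=f_{2n}$, $v_n=\lambda_{2n}$ and $\delta=5$. The hypotheses $\ell\geq3$ and $\ell\neq5$ give $\ell\nmid\delta$. Every prime divides some Fibonacci number, so $\ell\mid f_{2\alpha_\ell}=u_{\alpha_\ell}$, and $\ell$ divides some nonzero term of $(u_n)$. The bulk of the work is to identify the index $\rho_\ell$ and the valuation $r_\ell$ of Corollary~\ref{cor:wieferich-deviation} with the quantities in the present statement, namely $\alpha_\ell/\gcd(\alpha_\ell,2)$ and $v_\ell(f_{\alpha_\ell})$.

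\emph{Step 1: the rank of apparition for $(u_n)$.} I will use the classical fact that $\ell\mid f_m$ if and only if $\alpha_\ell\mid m$. Then $\ell\mid u_m=f_{2m}$ if and only if $\alpha_\ell\mid 2m$, which holds if and only if $\alpha_\ell/\gcd(\alpha_\ell,2)\mid m$. Hence the smallest such $m\geq1$ is $\rho_\ell=\alpha_\ell/\gcd(\alpha_\ell,2)$, in agreement with the statement. Note that $u_m\neq0$ for all $m\geq1$.

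\emph{Step 2: matching valuations.} I need $v_\ell(f_{2\rho_\ell})=v_\ell(f_{\alpha_\ell})$.
\begin{itemize}
\item If $\alpha_\ell$ is even, then $2\rho_\ell=\alpha_\ell$ and there is nothing to prove.
\item If $\alpha_\ell$ is odd, then $2\rho_\ell=2\alpha_\ell$ and $f_{2\alpha_\ell}=f_{\alpha_\ell}\lambda_{\alpha_\ell}$. It remains to show $\ell\nmid\lambda_{\alpha_\ell}$. This follows from $\lambda_m^2-5f_m^2=4(-1)^m$: since $\ell\mid f_{\alpha_\ell}$, we get $\lambda_{\alpha_\ell}^2\equiv\pm4\pmod{\ell}$, which is nonzero because $\ell$ is odd.
\end{itemize}
This gives $r_\ell=v_\ell(u_{\rho_\ell})=v_\ell(f_{\alpha_\ell})$.

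\emph{Step 3: conclusion.} Substituting $u_{\ell\rho_\ell}=f_{2\ell\rho_\ell}$, $u_{\rho_\ell}=f_{2\rho_\ell}$, $v_{\ell\rho_\ell}=\lambda_{2\ell\rho_\ell}$ and $v_{\rho_\ell}=\lambda_{2\rho_\ell}$ into Corollary~\ref{cor:wieferich-deviation} yields both valuation formulas and the formula for $r_\ell$. For the chain of equivalences, Corollary~\ref{cor:wieferich-deviation} shows each divisibility is equivalent to $\ell^2\mid u_{\rho_\ell}$. By Step 2 this is equivalent to $\ell^2\mid f_{\alpha_\ell}$, and by the cited criterion of Wall and Vinson (valid for odd $\ell$) this is equivalent to $\pi(\ell^2)=\pi(\ell)$, that is, to $\ell$ being a Wall--Sun--Sun prime.

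The main obstacle is the bookkeeping in Steps 1 and 2. Step 1 rests on the divisibility property $\ell\mid f_m\Leftrightarrow\alpha_\ell\mid m$, a standard consequence of $\gcd(f_m,f_n)=f_{\gcd(m,n)}$. In the odd-$\alpha_\ell$ case of Step 2, the coprimality of $\lambda_{\alpha_\ell}$ to $\ell$ must be checked separately. Everything else is a direct substitution.
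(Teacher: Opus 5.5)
Your proposal is correct and follows the paper's proof: you reduce to Corollary~\ref{cor:wieferich-deviation} for $a=3$ (so $u_n=f_{2n}$, $v_n=\lambda_{2n}$, $\delta=5$), identify $\rho_\ell$ as the least index $n\geq1$ with $\ell\mid f_{2n}$, and use $f_{2m}=f_m\lambda_m$ together with $\lambda_m^2-5f_m^2=4(-1)^m$ to get $v_\ell(f_{2\rho_\ell})=v_\ell(f_{\alpha_\ell})$ when $\alpha_\ell$ is odd. Compared with the paper, your Step~1 also proves the minimality of $\rho_\ell$ via $\ell\mid f_m\Leftrightarrow\alpha_\ell\mid m$, which the paper only asserts, and you state the final link to the Wall--Vinson criterion explicitly.
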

\begin{proof}
	The least index $n\ge1$ with $\ell\mid f_{2n}$ is $\rho_\ell$. If $\alpha_\ell$ is even, then $2\rho_\ell=\alpha_\ell$. If $\alpha_\ell$ is odd, then the classical identities $f_{2m}=f_{m}\lambda_{m}$ and $\lambda_m^2-5f_m^2=4(-1)^m$ give $v_\ell(f_{2\rho_\ell})=v_\ell(f_{\alpha_\ell})=r_\ell$. In either case, we have $v_\ell(f_{2\rho_\ell})=r_\ell$. Applying Corollary~\ref{cor:wieferich-deviation} to $u_n=f_{2n}$ implies the conclusion.
\end{proof}

\subsection{Local surjectivity}
The expansions in \S~\ref{sec:Lucas} also have a local consequence: the normalized second-order correction can be prescribed modulo every divisor of the base Lucas term.

\begin{theorem}[Local surjectivity]\label{thm:lucas-surj}
Fix $n\in\Z_{>0}$ with $u_n\neq0$. Let $s\in\Z\setminus\{0\}$ satisfy $s\mid u_n$, and let $t\in\Z$ be odd. Then, for every residue class $c\pmod{s}$, there are infinitely many positive odd integers $k$ such that
\[
k\equiv t\pmod{u_n^2}\quad\text{and}\quad\frac{u_{nk}/u_n-t}{u_n^2}\equiv c\pmod{s}.
\]
\end{theorem}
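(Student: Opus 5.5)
I will follow the proof of Proposition~\ref{prop:pell-star}\textup{(2)} closely, now using Corollary~\ref{cor:lucas4} in place of part~\textup{(1)} of that proposition. The second-order coefficient becomes
\[
\gamma(Z)=\frac{\delta Z(Z^2-1)}{24}=c_1(Z),
\]
which is an integer for odd $Z$ by Proposition~\ref{prop:lucas-all}.

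First I choose a residue class $r\equiv c-\gamma(t)\pmod{s}$ and set $k=t+ru_n^2$. Then $k\equiv t\pmod{u_n^2}$ holds automatically. It remains to make $k$ a positive odd integer, for infinitely many representatives $r$. If $u_n$ is even, every $r=r_0+m|s|$ works once $m$ is large. If $u_n$ is odd, then $s$ is odd, so I take an even representative $r_1$ and use $r=r_1+2m|s|$. This is exactly the parity argument already given in Proposition~\ref{prop:pell-star}\textup{(2)}.

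Second, Corollary~\ref{cor:lucas4} gives $u_{nk}/u_n=k+\gamma(k)u_n^2+bu_n^4$ for some $b\in\Z$. Since $k-t=ru_n^2$, this yields
\[
\frac{u_{nk}/u_n-t}{u_n^2}=r+\gamma(k)+bu_n^2\equiv r+\gamma(k)\pmod{s},
\]
using $s\mid u_n$.

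It then suffices to show $\gamma(k)\equiv\gamma(t)\pmod{s}$. This is the one step that differs from before, because the denominator is now $24$ rather than $6$, so the $\ell=2$ case needs care.

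For each prime $\ell$ with $f=v_\ell(s)\geq1$, I use
\[
24(\gamma(k)-\gamma(t))=\delta(k-t)(k^2+kt+t^2-1).
\]
For $\ell\geq5$ this is immediate, since $v_\ell(k-t)\geq 2f$ and $24$ is invertible at $\ell$. For $\ell=3$, I get $v_3\geq 2f-1\geq f$.

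For $\ell=2$, the bound from $k-t$ alone gives $v_2\geq 2f-3$, which is not enough when $f\le 2$. Here I will use the extra factor: $k$ and $t$ are both odd, so $k^2+kt+t^2-1$ is odd plus odd plus odd minus one, hence even. This gives $v_2\geq 2f+1-3=2f-2$, which still fails for $f=1$.

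So I should instead compare $\gamma(k)$ and $\gamma(t)$ directly through the polynomial $g(Z)=Z(Z^2-1)/8$, which maps odd integers to integers (indeed multiples of $3$, though I will not need that). Write $k=t+2m$ with $m=ru_n^2/2$, and expand:
\[
g(t+2m)-g(t)=\frac{2m(3t^2-1)+12m^2t+8m^3}{8}.
\]
Since $t$ is odd, $3t^2-1\equiv 2\pmod 8$, say $3t^2-1=2+8j$. Then
\[
g(t+2m)-g(t)=\frac{m}{2}+2mj+\frac{3m^2t}{2}+m^3=\frac{m(1+3mt)}{2}+2mj+m^3.
\]
If $m$ is odd then $1+3mt$ is even, so $v_2$ of the difference is at least $v_2(m)$. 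If $m$ is even, $v_2(m/2)=v_2(m)-1$.

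Taking the crude bound $v_2(g(k)-g(t))\geq v_2(m)-1=v_2(ru_n^2)-2\geq 2f-2$, I multiply by $\delta/3$, where the factor $1/3$ is harmless at $2$. This gives $v_2(\gamma(k)-\gamma(t))\geq 2f-2$, which is $\geq f$ when $f\geq2$.

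The case $f=1$ is the genuine obstacle. In that case $u_n$ is even, so $v_2(u_n^2)\geq2$ and $v_2(m)\geq1$. When $v_2(m)\geq 2$, the crude bound already gives $v_2\geq1$. When $v_2(m)=1$, $m$ is even and the term $m^3$ has $v_2(m^3)=3$. The term $m/2$ is odd, $3m^2t/2$ is even, and $2mj$ is even, so the difference is odd and the bound fails.

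To repair this, I would fall back on a standard fact about Lucas sequences: if $2\mid u_n$ and $2\nmid\delta$, then $4\mid u_n$. Indeed, if $a$ is odd then $u_3=a^2-1$ and the terms divisible by $2$ are the $u_{3i}$, with $v_2(u_3)\geq3$; if $a$ is even then $\delta\equiv0\pmod 4$. In that case $v_2(m)\geq 3$. If instead $2\mid\delta$, the factor $\delta$ supplies the missing power of $2$.

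This 2-adic bookkeeping is the step I expect to be most delicate. Once $\gamma(k)\equiv\gamma(t)\pmod{s}$ is established, the quotient is $\equiv r+\gamma(t)\equiv c\pmod{s}$, as required. Finally, distinct $r$ give distinct $k$, so there are infinitely many such $k$.
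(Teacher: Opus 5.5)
Your proof is correct and follows the paper's route. It uses the same choice $r\equiv c-\gamma(t)\pmod{s}$ with the parity argument from Proposition~\ref{prop:pell-star}\textup{(2)}, the same reduction to $\gamma(k)\equiv\gamma(t)\pmod{s}$, and the same $2$-adic case split ($a$ even gives $4\mid\delta$; $a$ odd forces $8\mid u_3\mid u_n$). The explicit expansion of $g(t+2m)-g(t)$ is an unnecessary detour, since the paper just uses $v_2(E)\geq v_2(k-t)+v_2(\delta)-3$ together with $v_2(k-t)\geq 2v_2(u_n)$; also, a small slip: $\gamma=\delta c_1$, not $c_1$.
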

\begin{proof}
For a positive odd integer $k=t+u_n^2r$ ($r\in\Z$), Corollary~\ref{cor:lucas4} gives
\[
\frac{u_{nk}/u_n-t}{u_n^2}=r+\delta\frac{k(k^2-1)}{24}+u_n^2b_k
\]
for some $b_k\in\Z$.

We claim that
\[
E:=\delta\frac{k(k^2-1)}{24}-\delta\frac{t(t^2-1)}{24}\equiv0\pmod{s}.
\]
Observe that
\[
k(k^2-1)-t(t^2-1)=(k-t)(k^2+kt+t^2-1)
\]
and $s^2\mid (k-t)$. Suppose $v_p(s)=e$ with $p$ a prime and $e\in\Z_{>0}$. If $p\ge3$, then
\[v_p(E)\geq v_p(k-t)-v_p(24)\geq v_p(s^2)-1=2e-1\geq e.
\]
Suppose now $p=2$. If $a$ is even, then $v_2(\delta)=v_2(a^2-4)\geq2$, so
\[
v_2(E)\geq v_2(k-t)+v_2(\delta)-v_2(24)\geq v_2(s^2)+2-3=2e-1\geq e.
\]
Suppose now that $a$ is odd. As $s,u_n$ are even, the recurrence for $(u_j)_{j\ge0}$ modulo $2$ yields $3\mid n$. Since $u_3=a^2-1\equiv0\pmod{8}$ and $u_3\mid u_n$ by \cite[(2.35)]{BW23}, we conclude that
\[
r_2:=v_2(u_n)\geq\max\{e,3\}.\]
Hence
\[
v_2(E)\geq v_2(k-t)-v_2(24)\geq2r_2-3\geq r_2\geq e.
\]
From the claimed congruence, we deduce that
\[
\frac{u_{nk}/u_n-t}{u_n^2}\equiv r+\delta\frac{t(t^2-1)}{24}\pmod{s}.
\]

Taking $r$ in the residue class
\[
r\equiv c-\delta\frac{t(t^2-1)}{24}\pmod{s}
\]
implies
\[
\frac{u_{nk}/u_n-t}{u_n^2}\equiv c\pmod{s}.
\]
The argument used in the proof of Proposition~\ref{prop:pell-star}\textup{(2)}, with $q_n$ replaced by $u_n$, shows that this residue class contains infinitely many $r\in\Z$ for which
\[
k=t+u_n^2r
\]
is a positive odd integer. Therefore, there are infinitely many $k$ satisfying both required congruences.
\end{proof}

\begin{corollary}[$\ell$-adic density]\label{cor:padic-density}
Let $\ell\geq3$ be a prime with $\ell\nmid\delta$. Let $n_0\in\Z_{>0}$ satisfy
\[
u_{n_0}\neq0\quad\text{and}\quad r_0:=v_\ell(u_{n_0})\geq1.
\]
Define $N_j:=n_0\ell^j$ for $j\in\Z_{\geq0}$. Fix an odd integer $t$. Then, for every $M\in\Z_{>0}$, every residue class $c\pmod{\ell^M}$, and every integer $j\geq\max\{0,M-r_0\}$, one has
\[
\#\left\lbrace k\in\Z_{>0}\colon2\nmid k,\ k\equiv t\pmod{u_{N_j}^2},\ \frac{u_{N_jk}/u_{N_j}-t}{u_{N_j}^2}\equiv c\pmod{\ell^M}\right\rbrace=\infty.
\]
\end{corollary}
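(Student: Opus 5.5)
The plan is to reduce the corollary to Theorem~\ref{thm:lucas-surj} with $n=N_j$ and $s=\ell^M$. The only thing to check is the hypothesis $s\mid u_{N_j}$, together with $u_{N_j}\neq0$. Once that holds, Theorem~\ref{thm:lucas-surj} applied to the odd integer $t$ and the residue class $c\pmod{\ell^M}$ gives infinitely many positive odd $k$ with $k\equiv t\pmod{u_{N_j}^2}$ and the required congruence modulo $\ell^M$. That is exactly the displayed set being infinite.

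The key step is the valuation estimate $v_\ell(u_{N_j})\ge r_0+j$ for all $j\ge0$, which I will prove by induction on $j$ via Corollary~\ref{cor:deviation-vp}\textup{(1)}. Suppose $u_{N_j}\neq0$ and $r_j:=v_\ell(u_{N_j})\ge r_0+j\ge1$. Since $\ell\ge3$ is odd, I apply Corollary~\ref{cor:deviation-vp}\textup{(1)} with $n=N_j$ and $k=\ell$. Then $\tau=v_\ell(\ell(\ell^2-1))=1<2r_j+h_\ell$, so the corollary gives
\[
v_\ell\!\left(\frac{u_{N_{j+1}}}{u_{N_j}}-\ell\right)=2r_j+1-h_\ell\ge 2r_j\ge2.
\]
Hence $u_{N_{j+1}}/u_{N_j}=\ell+(\text{something of valuation}\ge2)$, which has valuation exactly $1$. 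In particular it is nonzero, so $u_{N_{j+1}}\neq0$ and $r_{j+1}=r_j+1$. In fact this gives the exact equality $v_\ell(u_{N_j})=r_0+j$.

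With this estimate, for $j\ge\max\{0,M-r_0\}$ we have $v_\ell(u_{N_j})=r_0+j\ge M$, so $\ell^M\mid u_{N_j}$, and Theorem~\ref{thm:lucas-surj} applies as described.

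I expect no serious obstacle; the one point needing care is nonvanishing. Corollary~\ref{cor:deviation-vp} assumes $u_n\neq0$ at each stage, so $u_{N_{j+1}}\neq0$ has to come out of the induction itself. The exact valuation of the quotient supplies this. Alternatively, one can argue directly: the quotient $u_{N_jk}/u_{N_j}$ is congruent to $k$ modulo $u_{N_j}^2$ and hence to $\ell$ modulo $\ell^2$, so it is nonzero.
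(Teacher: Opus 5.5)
Your proof is correct, and its outer structure is the same as the paper's. Both reduce the statement to Theorem~\ref{thm:lucas-surj} with $(n,s)=(N_j,\ell^M)$, so the only thing to establish is $\ell^M\mid u_{N_j}\neq0$.

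The difference is how you get the valuation $v_\ell(u_{N_j})=r_0+j$. The paper imports it from Sanna \cite[Corollary~1.6]{Sanna16}. Because that result concerns nondegenerate Lucas sequences, the paper first checks $|a|\geq3$: it rules out $a=\pm2$ via $\ell\nmid\delta$, and $|a|\leq1$ because all nonzero terms would then be $\pm1$.

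You instead derive the valuation inside the paper by induction on $j$. With $k=\ell$ one has $\tau=1$, so Corollary~\ref{cor:deviation-vp}(1) forces $v_\ell(u_{N_{j+1}}/u_{N_j})=1$. Your simpler alternative is equally valid: the congruence $u_{n\ell}/u_n\equiv\ell\pmod{u_n^2}$ (Proposition~\ref{prop:lucas-all} with $h=1$), combined with $\ell^2\mid u_n^2$, gives the same conclusion.

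This lifting-the-exponent argument is self-contained and needs no nondegeneracy check. It also yields the nonvanishing of $u_{N_{j+1}}$ automatically, which is exactly the point you flagged. The citation route is shorter on the page and rests on a result computing $v_\ell(u_m)$ for every $m$. Only the special case along the chain $n_0\ell^j$ is needed here, and your argument recovers exactly that case.
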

\begin{proof}
Note that $\delta\neq0$ because $\ell\nmid\delta$; hence $a\neq\pm2$. If $|a|\leq1$, then every nonzero term $u_m$ ($m\geq0$) is $\pm1$, contrary to $\ell\mid u_{n_0}$. We conclude that $|a|\geq3$. Thus, \cite[Corollary~1.6]{Sanna16} applies and yields $v_\ell(u_{N_j})=r_0+j$ for $j\geq0$. Now let $M\in\Z_{>0}$, let $c$ be a residue class modulo $\ell^M$, and fix an integer $j\geq\max\{0,M-r_0\}$. Then $v_\ell(u_{N_j})=r_0+j\geq M$, and hence $\ell^M\mid u_{N_j}$. Applying Theorem~\ref{thm:lucas-surj} with $(n,s)=(N_j,\ell^M)$ implies the assertion.
\end{proof}
Corollary~\ref{cor:padic-density} concerns normalized second-order corrections rather than ordinary quotient sets in $\Q_\ell$; compare \cite{AntonyBarman23,AntonyBarman25}.

\subsection*{AI Disclosure}
The author used AI tools (ChatGPT by OpenAI and Gemini by Google) to analyze Sun's paper \cite{Sun26}, assist with calculations, conduct literature searches and reference checks, and polish the language. The author independently verified and approved all mathematical proofs, conclusions, and the final text, who takes full responsibility for the paper's content and correctness.


\begin{thebibliography}{BDH$^+$25}

\bibitem[ABH$^+$26]{ABHS26}
Levent~Alp\"oge, Manjul~Bhargava, Wei~Ho, and Ari~Shnidman.
\newblock Rank stability in quadratic extensions and Hilbert's tenth problem for the ring of integers of a number field.
\newblock {\em Invent. Math.}, 243(3):1129--1139, 2026.

\bibitem[AB23]{AntonyBarman23}
Deepa~Antony and Rupam~Barman.
\newblock $p$-adic quotient sets: linear recurrence sequences.
\newblock {\em Bull. Aust. Math. Soc.}, 108(1):19--28, 2023.

\bibitem[AB25]{AntonyBarman25}
Deepa~Antony and Rupam~Barman.
\newblock $p$-adic quotient sets: linear recurrence sequences with reducible characteristic polynomials.
\newblock {\em Canad. Math. Bull.}, 68(1):177--186, 2025.

\bibitem[BD25]{BayerDavid25}
Jonas~Bayer and Marco~David.
\newblock A formal proof of complexity bounds on Diophantine equations.
\newblock In {\em 16th International Conference on Interactive Theorem Proving (ITP 2025)},
Leibniz International Proceedings in Informatics (LIPIcs), vol.~352, Art.~No.~3,
pp.~3:1--3:18, Schloss Dagstuhl--Leibniz-Zentrum f\"ur Informatik, 2025.

\bibitem[BDH$^+$25]{BayerEtAl25}
Jonas~Bayer, Marco~David, Malte~Hassler, Yuri~Matiyasevich, and Dierk~Schleicher.
\newblock Diophantine equations over $\Z$: universal bounds and parallel formalization.
\newblock arXiv:2506.20909v2, 2025.

\bibitem[BW23]{BW23}
Christian~J.-C.~Ballot and Hugh~C.~Williams.
\newblock {\em The Lucas Sequences: Theory and Applications}.
\newblock CMS/CAIMS Books in Math., vol.~8. Springer, Cham, 2023.

\bibitem[Cut80]{Cutland}
Nigel~Cutland.
\newblock {\em Computability: an introduction to recursive function theory}.
\newblock Cambridge Univ. Press, Cambridge, 1980.

\bibitem[DDF21]{DDF21}
Nicolas~Daans, Philip~Dittmann, and Arno~Fehm.
\newblock Existential rank and essential dimension of diophantine sets.
\newblock arXiv:2102.06941v5, 2025.

\bibitem[Den75]{Denef75}
Jan~Denef.
\newblock Hilbert's tenth problem for quadratic rings.
\newblock {\em Proc. Amer. Math. Soc.}, 48:214--220, 1975.

\bibitem[DL26]{DingLi26}
Yuchen~Ding and Junfeng~Li.
\newblock An AI proof of $18$-variable undecidability for Diophantine equations over $\Z[i]$.
\newblock arXiv:2606.12776v1, 2026.

\bibitem[HJ23]{HarringtonJones23}
Joshua~Harrington and Lenny~Jones.
\newblock A note on generalised Wall--Sun--Sun primes.
\newblock {\em Bull. Aust. Math. Soc.}, 108(3):373--378, 2023.

\bibitem[Kha22]{Kha22}
Sudesh~Kaur~Khanduja.
\newblock {\em A textbook of algebraic number theory}, {\em Unitext}, vol.~135.
\newblock Springer, Singapore, 2022.

\bibitem[KP26]{KP26}
Peter~Koymans and Carlo~Pagano.
\newblock Hilbert's tenth problem via additive combinatorics.
\newblock {\em J. Amer. Math. Soc.}, 2026. Published online.

\bibitem[MS27]{MS27}
Yuri~Matiyasevich and Zhi-Wei~Sun.
\newblock Undecidability on Diophantine equations over $\Z[i]$ with $20$ unknowns.
\newblock {\em J. Number Theory}, 290:210--219, 2027.

\bibitem[San16]{Sanna16}
Carlo~Sanna.
\newblock The $p$-adic valuation of Lucas sequences.
\newblock {\em Fibonacci Quart.}, 54(2):118--124, 2016.

\bibitem[Sie70]{Sierpinski70}
Wac{\l}aw~Sierpi\'nski.
\newblock {\em 250 Problems in Elementary Number Theory}.
\newblock American Elsevier Publishing Co., New York, 1970.

\bibitem[SS92]{SS92}
Zhi-Hong~Sun and Zhi-Wei~Sun.
\newblock Fibonacci numbers and Fermat's last theorem.
\newblock {\em Acta Arith.}, 60(4):371--388, 1992.

\bibitem[Sun21]{Sun21}
Zhi-Wei~Sun.
\newblock Further results on Hilbert's tenth problem.
\newblock {\em Sci. China Math.}, 64(2):281--306, 2021.

\bibitem[Sun26]{Sun26}
Zhi-Wei~Sun.
\newblock On Diophantine equations over the integer rings of quadratic fields.
\newblock arXiv:2608.03992v1, 2026.

\bibitem[Vin63]{Vinson63}
John~Vinson.
\newblock The relation of the period modulo $m$ to the rank of apparition of $m$ in the Fibonacci sequence.
\newblock {\em Fibonacci Quart.}, 1(2):37--45, 1963.

\bibitem[Wal60]{Wall60}
D.~D.~Wall.
\newblock Fibonacci series modulo $m$.
\newblock {\em Amer. Math. Monthly}, 67(6):525--532, 1960.

\end{thebibliography}
\end{document}